\newtheorem{theorem}{Theorem}[section]
\newtheorem*{theoremFC}{Theorem FC}% [section]
\newtheorem{lemma}[theorem]{Lemma}
\newtheorem{corollary}[theorem]{Corollary}
\newtheorem{proposition}[theorem]{Proposition}
\theoremstyle{definition}
\newtheorem{definition}[theorem]{Definition}
\newtheorem{definitions}[theorem]{Definitions}
\numberwithin{equation}{section}
\newcommand{\Ker}{\operatorname{Ker}}
\newcommand\Hobs{{H^{\text{\rm obs}}_{A,\theta}}}
\newcommand\Hctr{{H^{\text{\rm ctr}}_{A,\theta}}}
\newcommand\beqn{\begin{equation}}
\newcommand\neqn{\end{equation}}
\newcommand{\al}{\alpha}   %       !!!!!!!!!!!!!!!!!!!!!!!!!!!!!!!!
\newcommand{\be}{\beta}   %       !!!!!!!!!!!!!!!!!!!!!!!!!!!!!!!!
\newcommand{\de}{\delta}   %       !!!!!!!!!!!!!!!!!!!!!!!!!!!!!!!!
\newcommand{\De}{\Delta}   %       !!!!!!!!!!!!!!!!!!!!!!!!!!!!!!!!
\newcommand{\ga}{\gamma}   %       !!!!!!!!!!!!!!!!!!!!!!!!!!!!!!!!
\newcommand{\Ga}{\Gamma}   %       !!!!!!!!!!!!!!!!!!!!!!!!!!!!!!!!
\newcommand{\tht}{\theta} %       !!!!!!!!!!!!!!!!!!!!!!!!!!!!!!!!
\newcommand{\om}{\omega}   %       !!!!!!!!!!!!!!!!!!!!!!!!!!!!!!!!
\newcommand{\si}{\sigma}   %       !!!!!!!!!!!!!!!!!!!!!!!!!!!!!!!!
\newcommand{\Hnty}{H^\infty} %       !!!!!!!!!!!!!!!!!!!!!!!!!!!!!!!!
\newcommand{\wt}{\widetilde } %       !!!!!!!!!!!!!!!!!!!!!!!!!!!!!!!!
\newcommand{\wh}{\widehat } %       !!!!!!!!!!!!!!!!!!!!!!!!!!!!!!!!
\newcommand{\cO}{\mathcal{O} } %       !!!!!!!!!!!!!!!!!!!!!!!!!!!!!!!!
\newcommand{\cW}{\mathcal{W} } %       !!!!!!!!!!!!!!!!!!!!!!!!!!!!!!!!
\newcommand{\sgn}{\operatorname{sgn}}
\newcommand{\spec}{\operatorname{spec}}
\newcommand{\margp}[1]{}
\newcommand{\la}{\lambda}
\newcommand{\RR}{\Bbb R}
\newcommand{\CC}{\Bbb C}
\newcommand{\ZZ}{\Bbb Z}
\newcommand{\DD}{\Bbb D}
\newcommand{\Q}{\mathcal{Q}}
\newcommand{\Lom}{\mathcal{L}}
\begin{document}

% \title[$H^\infty$-functional calculus and Nagy-Foia\c{s} models]{$H^\infty$-functional calculus and Nagy-Foia\c{s} models}
\title[$H^\infty$-functional calculus and models of Nagy-Foia\c{s} type]
{$H^\infty$-functional calculus and models of Nagy-Foia\c{s} type for sectorial operators}

\author{Jos\'{e} E. Gal\'{e}}
\address{Departamento de Matem\'{a}ticas and I. U. M. A.,
Universidad de Zaragoza, Zaragoza 50009, Spain} \email{gale@unizar.es}

\author{Pedro J. Miana}
\address{Departamento de Matem\'{a}ticas and I. U. M. A.,
Universidad de Zaragoza, Zaragoza 50009, Spain} \email{pjmiana@unizar.es}
\thanks{The first and  second author have been supported by Project E-64, Gobierno de  Arag\'on and Projects
MTM2004-03036 and MTM2007-61446, DGI-FEDER, of the MCYT, Spain.}

\author{Dmitry V. Yakubovich}
\address{Departamento de Matem\'{a}ticas,
Universidad Autonoma de Madrid, Cantoblanco 28049 (Madrid) Spain
\enspace and
\newline
\phantom{r} Instituto de Ciencias
Matem\'{a}ticas (CSIC - UAM - UC3M - UCM)}
\email{dmitry.yakubovich@uam.es}
\thanks{The third author has been supported by the Ram\'on and Cajal Programme (2002) and the
Project MTM2008-06621-C02-01, DGI-FEDER, of the Ministry of Science and Innovation, Spain.}

% \keywords{analytic semigroup, sectorial operator,
% H-infinity calculus, functional model, characteristic function}

\begin{abstract}
We prove that a sectorial operator admits an $H^\infty$-functional
calculus if and only if it has a functional model of Nagy-Foia\c{s} type. Furthermore, we give a concrete formula for the
characteristic function (in a generalized sense) of such an operator.
More generally, this approach applies to any sectorial operator by passing to a  different norm (the McIntosh square function norm).
We also show that this quadratic norm
is close to the original one, in the sense that there is only a logarithmic gap between them.

\medskip

\noindent {\bf M.S.C.(2000):}  Primary: 47A45; Secondary: 47A60, 32A10.

\noindent {\bf Keywords :}
Sectorial operators,
functional calculus, functional model, Nagy--Foia\c{s} model.
\end{abstract}
\maketitle
\section{Introduction }
 Let $H$ be a separable Hilbert space
 and let $L(H)$
 \margp{$L(H)$ en vez de $\mathcal{L}(H)$ (transformada de Laplace !!)}
 be the Banach algebra of linear and bounded operators on $H$.
 For $0<\omega <\pi$, we put
 $S_\omega:=\{z\in \CC\,\,;\,\, \vert \hbox{arg}(z)\vert \le
 \omega\}\cup\{0\}$. Let $A$ be a closed operator with domain $D(A)$ and spectrum
 $\sigma(A)$. Put $\rho(A):=\CC\setminus\sigma(A)$. The operator $A$ is said to be {\it sectorial} of
 type $\omega$ if $\sigma(A)\subset S_\omega$ and, for each $\theta$ with
 $\omega<\theta<\pi$, there exists $C_\theta$ such that
 \beqn % \begin{equation}
 \label{sect_cond}
 \Vert (z-A)^{-1}\Vert \le \frac{C_\theta}{\vert z\vert}, \qquad
 z\not \in S_\theta.
 \neqn % \begin{equation}

Each operator of this type has a decomposition $A=0\oplus A_0$
with respect to some direct sum representation,
$H=\hbox{Ker}(A)\oplus H_0$, where  $\hbox{Ker}(A_0)=\{0\}$ and $A_0$
has a dense range. From now on, we assume that $A$ itself is
one-to-one, which
is equivalent to
the fact that it has dense range, see Remark 2.6 in \cite{leMeW}.

Take $\tht\in(\om,\pi)$ and denote by $S^\circ_\tht$ the interior of the sector $S_\tht$.
Following \cite{McInt1}, we define  the class $\Psi(S^\circ_\tht)$ as being formed by all holomorphic functions
$f\in {\hbox{Hol}}(S_\tht^\circ)$ such that
\beqn\label{Psi}\vert
f(z)\vert \le C{\frac{\vert z\vert^s} {1+\vert z\vert^{2s}}} , \quad \hbox{ for all } z\in S_\tht  \hbox{ and some } s, C>0.
\neqn

The operator $A$ admits
\margp{Cambiado}
a functional
calculus (the so-called Dunford-Riesz calculus), which is
constructed on the basis of the Cauchy operator-valued formula,
and which provides us with bounded operators $\psi(A)$ for
functions $\psi\in \Psi(S^\circ_\tht)$, $\tht>\omega$, see
\cite{McInt1}:
\begin{equation}
\label{DuRi}
\psi(A):=\frac 1{2\pi i}\int_\gamma(zI-A)^{-1}\psi(z)\ dz,
\end{equation}
where $\gamma$ is the contour defined
\margp{**}
as $\gamma(t)=-te^{i\tht'}$
if $-\infty<t\leq0$ and $\gamma(t)=te^{-i\tht'}$, if $0\leq t<\infty$,
for $\tht>\tht'>\omega$. The above calculus can be extended to
functions in $H^\infty(S^\circ_\tht)$ (that is, bounded and
analytic functions on $S^\circ_\tht$) by the formula
$f(A):=\varphi(A)^{-1}(f\varphi)(A)$, $f\in
H^\infty(S^\circ_\tht)$, where $\varphi(z)=z(1+z)^{-2}$, although
$f(A)$ may well be possibly unbounded \cite{McIYa}. The operator
$A$ is said to have a $H^\infty(S^\circ_\tht)$-functional calculus
if $f(A)\in L(H)$ for every $f\in
H^\infty(S^\circ_\tht)$. In this case, the mapping $f\mapsto f(A)$ is a
bounded Banach algebra homomorphism, see \cite[Proposition~ 5.3.4]{Haase}.

We refer to \cite[Theorem 3.3]{Che_Gale} for a discussion of the uniqueness of the
\margp{Referencia Che Gale}
continuation of the $H^\infty(\Omega)$-functional calculus from
the set of rational functions to different subalgebras of
$H^\infty(\Omega)$, where $\Omega$ is a disc or a simply connected domain,
and to \cite[Theorem 2.6]{Uiterd} for a uniqueness result in
the context of a functional calculus, which maps
analytic functions on $S^\circ_\tht$
that may have polynomial growth at $0$ and at $\infty$ into
the set of closed operators on $H$.

For every non-zero $\psi \in \Psi(S^\circ_\tht)$, set $\psi_t(z):=
\psi(tz)$ for $t>0$ and $z\in S_\tht^0$. Define the norm
\begin{equation}\label{normMC}
\| x\|_A:=\left(\int_0^\infty\Vert \psi_t(A)x\Vert^2\; \frac{dt} t
\right)^{\frac 12}
\end{equation}
on the linear manifold $H_c$ of those $x\in H$ for which this
expression is finite, and let $H_A$ denote the Hilbert space
obtained as the completion of $H_c$ with respect to the above
norm. Different choices of $\tht$ and $ \psi$ give rise to
equivalent norms, so to the same space $H_A$ \cite{leMeW}.  It is
known that
\begin{equation}\label{FC_A}
\Vert f(A)x\Vert_A\le \Vert f\Vert_{\infty,\tht}\, \Vert x\Vert_A, \qquad x\in H_A, f\in H^\infty(S^\circ_\tht),
\end{equation}
where $\Vert f\Vert_{\infty,\tht}$ denotes the sup-norm of $f$ on
$S^\circ_\tht$, see \cite{McIYa}, or  \cite[Theorem 3.1]{leMeW}.
Then the operator $A$ has a bounded
$H^\infty(S^\circ_\tht)$-functional calculus if and only if  the
norm $\Vert \quad \Vert_A$ is equivalent to the norm in $H$ and
$H_A=H$. These questions have much relationship with the Kato
problem and the boundedness of the Cauchy integrals on Lipschitz
curves; see for example \cite{AuMcIntNahm, Haase} and
references therein. We refer also to the survey \cite{Wei_L}
for a brief introduction to the $H^\infty$ calculus for sectorial operators.

Another setting where the $H^\infty$-functional calculus appears
naturally is that of the Nagy-Foia\c{s} functional model of
Hilbert space operators \cite{SzNF}. This model is constructed
originally for contractive or dissipative operators in the unit
disc $\DD$ or in the upper half-plane, respectively. Let us
explain the above in some
more detail. Assume that $T$ is a
contraction on $H$
\margp{$\lim_{n\to\infty}T^n\,x=0$ for all $x\in H$}
such that $\lim_{n\to\infty}T^n\,x=0$
for all $x\in
H$. The existence of a Nagy-Foia\c{s} model for $T$ means that
$T$ can be realized as the multiplication operator $\widehat{M}_z$
on the quotient space $H^2(\DD,E)/\delta H^2(\DD,F)$, through a
Hilbertian isometry $V\colon H^2(\DD,E)/\delta H^2(\DD,F)\to H$.
Here $E$ and $F$ are auxiliary Hilbert spaces known as defect
spaces for $H$, and $\delta$ in $H^\infty(\DD; L(F,E))$
is the (unique) so-called Nagy-Foia\c{s} characteristic function
for $T$, see \cite{SzNF}. Then the $H^\infty$-functional calculus
for $T$ follows immediately by putting
$f(T):=V\circ\widehat{M}_{f(z)}\circ V^{-1}$ for all $f\in
H^\infty(\DD)$.

\medskip
A theory about models of Nagy-Foia\c{s} type in rather general
domains  $\Omega\subset\CC$ has been recently established in
\cite{Yak}.
%We which can be regarded in association with items
%arising in control theory
%
Let us explain its connections with the linear control theory,
using the notation of \cite{YaPa}.
%\margp{Nueva frase}
%Assume that $A$
%is a generator of a $C_0$ semigroup on a Hilbert space $X$,
%which has a meaning of the system's state space.
Recall that a linear observation  system is given by
$$
\begin{cases}
x'(t)= - Ax(t) \in X,& 0\leq t<\infty; \cr
x(0)=a\in X; \cr y(t)= Cx(t)\in Y, &0\leq t<\infty.
\end{cases}
$$
Here $Y$ is the output Hilbert space, $X$ is a Hilbert space which
has the meaning of the system state space, and $A$ is assumed to
be the generator of a $C_0$ semigroup on  $X$. The function
$y\colon[0,\infty)\to Y$ is called the output of the system. Then
one can define the mapping $\cO_{A,C}: X\to
\mbox{Hol}\,(\rho(A);Y)$ by
$$
\cO_{A,C} x(z):=C(z-A)^{-1} x, \qquad z\in\rho(A).
$$
Notice
\margp{**}
that $-\cO_{A,C} x(-z)=\Lom(y)(z)$ for $\Re z>0$,
where $\Lom(y)$ is the Laplace transform of $y$.

The mapping $\cO_{A,C}$ is called the observation map. It will be shown later
that this map gives rise to the \textit{observation model} of $A$.

A linear control system on $\Omega$ is defined by
\margp{eqn**}
$$
x'(t)=-Ax(t)+Bu(t),\ \  -\infty<t\leq 0
$$
for $u\in L^2((-\infty,0),U)$ of compact support, where $U$ is the
input Hilbert space of the system and $B\colon D(B)\to X$ is a
closed operator. Assume that there exists $\mathcal{C}_{A,B}\colon
L^2((-\infty,0),U)\to X$,
which is a
continuous extension of  $u\mapsto
x(0)$. Then the controllability map $W_{A,B}$ is defined as
\margp{**}
$W_{A,B}:=\mathcal{C}_{A,B}\circ\Lom^{-1}\circ \operatorname{inv}$, where
$\operatorname{inv} f(z)=f(-z)$, see \cite{YaPa}, Section 5.
This map gives rise to the \textit{control model} of $A$.
Formulae of $W_{A,B}$
type will be given in detail in Section 2 below, for sectorial $A$.
These two models
are in a certain duality, as is explained in
\cite{Yak}, \cite{YaPa}.
In fact, one can consider the observation and the control model
both for $A$ and for $A^*$, which gives four
different models.

%Both observation and control models can be interpreted as dual models, in a certain sense \cite{YaPa}.

\medskip
The triple $(U, Y, X)$ of Hilbert spaces and
\margp{**}
corresponding mappings $\cO_{A,C}$, $W_{A,B}$ previously
considered can be defined independently of differential equations,
and they can be taken as starting point to construct functional
models for the operator $A$,  in a more abstract context. More
precisely, under suitable conditions on $\Omega$ and $A$, such
models are built on the corresponding representation Hilbert spaces
$\Hobs$ and $\Hctr$. There is a natural isomorphism
between these spaces.
It is shown that
$A$ is similarly equivalent (but not
necessarily unitarily equivalent) to its model, which is,
basically,  the
multiplication operator either on $\Hobs$ or on $\Hctr$.

When implementing
\margp{cambios}
this procedure, it is very important to make a good
choice of operators $B$ and $C$ for a given operator $A$.
The generalized
characteristic function $\delta$ need
not be unique, and must be well chosen; see \cite{Yak}, where several
examples  are given. In \cite{YaPa}, this
theory has been carried out to construct models on certain
parabolic domains, which apply to nondissipative perturbations of
unbounded self-adjoint operators on a Hilbert space.

The aim of the present  paper is to give a concrete construction
of control and observation functional models of Nagy-Foia\c{s}
type for {\it every} sectorial operator $A$ on a
\margp{**}
Hilbert space
 of type $\om$ in sectors $S^\circ_\tht$, $\om<\tht<\pi$.
(To be more
\margp{**}
precise, the model is constructed for the natural extension
of $A$ onto $H_A$.)

 The
relationships between sectorial operators and control theory are
not new, see for instance \cite{Arli}, \cite{Lemerdy}. In
\cite[Theorem 4.1]{Lemerdy}, see also \cite[p. 204]{leMeW}, Le Merdy considers
the admissibility of operators $C$ as above in the case that $-A$
is the infinitesimal generator of a bounded analytic
$C_0$-semigroup. He shows that $C=\sqrt A$ is an admissible
operator for $A$ if and only if $A$ has a square function
estimate, that is, $\Vert x\Vert_A\leq K\Vert x\Vert$, ($x\in H$),
for some constant $K>0$.

Inspired by the above result, we shall prove here that the particular choice
\margp{eqn**}
$$
B:= 2\sqrt{A}, \qquad C:=\sqrt{A}, \qquad U=Y:=H,
\qquad X:=H_A,
$$
gives rise to a model of $A$ of  Nagy-Foia\c{s} type in $S_\theta^\circ$.
Moreover, for $\tht\in (\om,\pi)$ and $\alpha>0$ such
that $\alpha \tht<\frac\pi 2$, let $\delta_\alpha$ be
the operator function defined by
\beqn
\label{delta-al}
\delta_\al(z):=\frac 1\al\;{A^{\alpha}-z^\alpha\over A^\alpha+z^\alpha},
\qquad z\in S_\tht.
% (-\infty, 0];
\neqn

In the definition of $z^\al$ and $A^\al$
we mean that $z^\alpha$ is the continuous branch of the function
$z^\al$ on $\CC\setminus (-\infty,0]$ such that $(1)^\al=1$.
As it will be explained below,
$\de_\al$ is in $H^\infty\big(S^\circ_\tht; L(H)\big)$.

We shall show that \textit{for any $\tht$ and $\al$ as above, the
operator $A$, considered as an operator on $H_A$,  possesses a
Nagy--Foia\c{s} type model in the open sector $S^\circ_\tht$, and
$\de_\al$ is one of its generalized Nagy--Foia\c{s} characteristic
functions in the sense of} \cite{Yak} (see Theorems
\ref{Ctr-mod} and \ref{Obs-mod} below). One of the stages in the proof
of this result is
\margp{cambio}
to prove equalities
$$
H_A=\Hctr=\Hobs, \qquad \forall\tht\in (\om,\pi)
$$
% the fact that the Hilbert space $H_A$ is isomorphic to the control and observation Hilbert spaces $\Hctr$ and $\Hobs$, respectively.
(with the equivalence of the corresponding norms).
Thus we obtain that quadratic estimates
related to sectorial operators, which on the other hand have
revealed significant in the treatment of several analytic
questions \cite{AuMcIntNahm}, are also in the basis of control
theory and functional models of Nagy--Foia\c{s} type.

In particular it follows from the above that for every one-to-one
operator $A$ of type $\om$, and any $\tht\in(\om,\pi)$, the
following three properties are equivalent, see Theorem FC and
Theorem \ref{eqiv conds} below :

\

\noindent (1) There exists a $\Hnty$ functional calculus
in $S_\tht^\circ$ for $A$.

\noindent (2) The equality
$H_A=H$ holds.

\noindent (3) There exists a Nagy--Foia\c{s} type functional model in
$S_\tht^\circ$ for $A$, and $\de_\al$ is
\margp{**}
a generalized characteristic function of $A$.

\

\noindent As a consequence, the existence of a Nagy--Foia\c{s}
functional model for $A$ in sectors depends on the equality
$H_A=H$ (the equivalence between (1) and (2) above was well known,
as it has been already mentioned).
How distant the spaces (and their norms) $H_A$ and $H$ are is a
question of independent interest, which we also consider here.
\margp{cambiado}
In Theorem \ref{thm log}, we
show that
the gap between these two norms is, in a certain sense, of logarithmic order
and hence is small.
%these two norms are \lq\lq equivalent"
%up to an operator-valued function of logarithmic order.

\

The plan of the paper is as follows. In Section 2 we introduce the
main concepts (spaces and operators)  associated with a sectorial
operator, from the point of view of (abstract) control theory, and
state the main theorems of the article. One of the  main tools in
the paper will be an operator ${\mathcal J}_{\al}$ of Hankel type
defined on the Hardy-Smirnov class which, in principle, depends on
the parameter $\alpha\in(0, \pi/\omega)$. A key point in our
arguments \margp{unas palabras quitadas} is that such an operator
{\it does not} depend on $\alpha$ in the above range, indeed. To
prove this, we need to establish some results about analytic
extensions
of operator-valued functions $\delta_{\al}$
 to sectors $S^\circ_\tht$. Such extensions are
studied in Section \ref{complex}. In Section \ref{like}, we
introduce a Hankel-like operator ${\mathcal J}_{\al}$ and prove
its basic properties, including a kind of independence with
respect to $\al$.   Control and observation (Hilbert) spaces and
operators associated with a sectorial operator are studied in
Section \ref{s aux}. We proceed with a collection of preparatory
results which eventually allows us to prove the concrete
isomorphism results for
 the control and observation spaces. Section \ref{Proofs main res} is devoted to
culminate the proofs of the main theorems, stated in Section
\ref{jugoso}. Finally, Section \ref{Sec_final} contains some remarks and
comments.

We also would like to mention that \cite{Yak} contains a construction
of a model of Nagy-Foia\c{s} type for an arbitrary generator of
a $C_0$ group on a Hilbert space in a suitable vertical strip.
It follows, in particular, that these operators always admit an $H^\infty$ calculus in
a strip, which has been already known, due to a result by
Boyadzhiev and deLaubenfels \cite{Boy_deLaub} (the third author, unfortunately,
was not aware of this work at that moment).

\section{Main results}\label{jugoso}

Let us introduce the following notation.
Let $H$ denote a separable Hilbert space. (All Hilbert spaces appearing in this article are supposed to be separable.)
For a densely defined closed operator $T$ on $H$ with
trivial kernel we define the Hilbert space
$TH$ as the set of
formal expressions $T x$, where $x$ ranges
\textit{over the
whole space} $H$.
Put $\|Tx\|_{TH}:= \|x\|_H$ for all
$x\in H$.
If $T^{-1}\in  L(H)$, then the
formula $x=T(T^{-1}x)$ allows one to interpret $H$ as a linear submanifold
of $TH$. If $T$ is bounded, then, conversely, $TH$ is a
a linear submanifold of $H$.

Let $A$ be a sectorial operator on a Hilbert space $H$. We assume
\margp{are to -> can be}
concepts and notation of the above section. Our first result
\margp{quitada una frase por ser redundante}
concerns the comparison between the norms of the Hilbert spaces
$H$ and $H_A$.  Related
results
can be found
in \cite{deLau}, Corollary
4.7 and Theorem 7.1 (b).
%\cite[Corollary 4.7 and Theorem 7.1 (b)]{deLau}.

In the following result $Log(z)$ is the principal branch of the
logarithm with argument in $[-\pi,\pi)$.
\begin{theorem}
\label{thm log} Put $\Lambda_k(z):= \hbox{Log(z)}+2k\pi i$ for
$z\in \CC\backslash(-\infty, 0]$. Then
for any $r>{1\over 2}$ and any $k\in\ZZ$, $k\ne0$ we have
$$
\Lambda_k(A)^{-r}H\subseteq H_A\subseteq
\Lambda_k(A)^{r}H
$$
and there is a
\margp{$C_t\to C_{k,r}$}
constant $C_{k,r}>0$ such that
  \begin{equation}\label{Lambda}
   C_{k,r}^{-1}\Vert  \Lambda_k(A)^{-r}x\Vert\le \Vert x\Vert_A\le C_{k,r}\Vert
  \Lambda_k(A)^{r}x\Vert,
  \end{equation}
  for all $x\in  \Lambda_k(A)^{-r}H$. The space
  $\Lambda_k(A)^{-r}H$ is dense in $H_A$.
  \end{theorem}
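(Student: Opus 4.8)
The plan is to reduce everything to a scalar estimate via the $H^\infty$-calculus inequality \eqref{FC_A} and the McIntosh square function machinery, exploiting that $A$ is one-to-one of type $\om$, so that $\Lambda_k(z)$ for $k\ne 0$ is bounded away from $0$ and from $\infty$ in a neighborhood (in fact a full sector) of $S_\om\setminus\{0\}$. The first observation is that for $k\ne 0$ the function $z\mapsto \Lambda_k(z)$ maps $S^\circ_\tht$ (for $\tht$ slightly larger than $\om$, but still $<\pi$) into a region contained in $\CC\setminus(-\infty,0]$ that stays away from $0$, since $|\Lambda_k(z)|\ge |2k\pi i| - |\mathrm{Log}(z)| \ge (2|k|-1)\pi$ away from a bounded annulus and is comparably large there too; consequently the branch $\Lambda_k(z)^{-r}$ is a well-defined holomorphic function on $S^\circ_\tht$. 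The key point is to show that $\Lambda_k(z)^{-r}\varphi(z)$ and, more to the point, the functions $z\mapsto z^{it}$ and their logarithmic analogues behave well enough that $\psi_s(z)\Lambda_k(z)^{\pm r}$ stays in $\Psi(S^\circ_\tht)$ uniformly in $s$ in an integrable way.

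Concretely, I would first prove the right-hand inclusion and inequality $\|x\|_A\le C_{k,r}\|\Lambda_k(A)^r x\|$ for $x\in\Lambda_k(A)^{-r}H$. Write $x=\Lambda_k(A)^{-r}y$ with $y\in H$; then formally $\|x\|_A = \|\Lambda_k(A)^{-r}y\|_A$, and by \eqref{FC_A} applied to the bounded holomorphic function $f(z)=\Lambda_k(z)^{-r}$ on $S^\circ_\tht$ (bounded because $\Lambda_k$ is bounded below there, $k\ne 0$, $r>1/2>0$), we get $\|\Lambda_k(A)^{-r}y\|_A\le \|f\|_{\infty,\tht}\|y\|_A$. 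This is not quite what we want, because $y$ lies in $H$, not a priori in $H_A$ — so the real content is the elementary square-function bound $\|y\|_A\le C\|\Lambda_k(A)^r y\|$ ... but that is circular. Instead the correct route is: the square function norm of $\Lambda_k(A)^{-r}y$ is $\big(\int_0^\infty\|\psi_s(A)\Lambda_k(A)^{-r}y\|^2\,\tfrac{ds}{s}\big)^{1/2}$, and $\psi_s(z)\Lambda_k(z)^{-r}$ is a member of $\Psi(S^\circ_\tht)$ for each $s$, so $\psi_s(A)\Lambda_k(A)^{-r}=g_s(A)$ for $g_s(z)=\psi_s(z)\Lambda_k(z)^{-r}$, a \emph{bounded} operator on $H$ given by the Dunford--Riesz integral \eqref{DuRi}. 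The crux is then a uniform-in-$s$ bound $\|g_s(A)y\|\le h(s)\|y\|$ with $\int_0^\infty h(s)^2\,\tfrac{ds}{s}<\infty$; since $|g_s(z)|$ inherits the $\Psi$-decay from $\psi_s$ and only a bounded factor from $\Lambda_k^{-r}$, the standard estimate of the Dunford--Riesz contour integral (splitting $\gamma$ near $0$ and near $\infty$, using \eqref{sect_cond}) gives $\|g_s(A)\|\le C\min(s^\eps,s^{-\eps})$ for a suitable $\eps>0$ coming from the exponent $s$ in \eqref{Psi} — which is exactly square-$\tfrac{ds}{s}$-integrable. That yields $\|\Lambda_k(A)^{-r}y\|_A\le C_{k,r}\|y\| = C_{k,r}\|\Lambda_k(A)^r(\Lambda_k(A)^{-r}y)\|_{\Lambda_k(A)^r H}$, i.e. the right inclusion with its norm bound (here I use the convention $\|Tx\|_{TH}=\|x\|_H$ from Section \ref{jugoso}).

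For the left-hand inclusion $\Lambda_k(A)^{-r}H\subseteq H_A$ and the bound $C_{k,r}^{-1}\|\Lambda_k(A)^{-r}x\|\le\|x\|_A$ — wait, I must be careful with which variable is which; reading the statement, \eqref{Lambda} is for $x\in\Lambda_k(A)^{-r}H$, and reads $C_{k,r}^{-1}\|\Lambda_k(A)^{-r}x\|\le\|x\|_A\le C_{k,r}\|\Lambda_k(A)^r x\|$. The right inequality is what I sketched. The left inequality $\|\Lambda_k(A)^{-r}x\|\le C_{k,r}\|x\|_A$ is, by the same token, the statement that $\Lambda_k(A)^{-r}$, \emph{as a map out of} $H_A$, is bounded into $H$; equivalently, that multiplication by $\Lambda_k(z)^{-r}$ sends the square-function space boundedly to $H$. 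This one I would prove by the dual McIntosh reconstruction: there is a $\phi\in\Psi(S^\circ_\tht)$ with $\int_0^\infty \phi(s)\psi(s)\,\tfrac{ds}{s}=1$, giving the Calderón-type reproducing formula $z^0 = \int_0^\infty \phi_s(z)\psi_s(z)\,\tfrac{ds}{s}$ strongly; applying $\Lambda_k(A)^{-r}$ and pairing against an arbitrary $u\in H$, one estimates $\langle\Lambda_k(A)^{-r}x,u\rangle = \int_0^\infty\langle \psi_s(A)x, (\Lambda_k(A)^{-r}\phi_s(A))^* u\rangle\,\tfrac{ds}{s}$, then Cauchy--Schwarz in $s$ bounds this by $\|x\|_A\cdot\big(\int_0^\infty\|(\Lambda_k(A^*)^{-\bar r}\overline{\phi_s}(A^*))u\|^2\,\tfrac{ds}{s}\big)^{1/2}\le\|x\|_A\cdot C_{k,r}\|u\|$, the last step being the \emph{same} uniform Dunford--Riesz bound as before applied to $A^*$ (also sectorial of type $\om$, one-to-one). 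Taking the supremum over unit $u$ gives the claim, and in particular shows $\Lambda_k(A)^{-r}x\in H$.

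Finally, density of $\Lambda_k(A)^{-r}H$ in $H_A$: since $\Lambda_k(A)^{-r}$ has dense range in $H$ (its inverse $\Lambda_k(A)^r$ is densely defined and injective, $A$ being one-to-one) and $H_c$ — the manifold on which $\|\cdot\|_A$ is finite — is dense in $H_A$ by definition, it suffices to approximate a fixed $x\in H_c$ in $\|\cdot\|_A$ by elements of $\Lambda_k(A)^{-r}H$. Take $x_n = \psi_{1/n}(A)\cdots$ — more cleanly, use the operators $\varphi_n(A)$ with $\varphi_n(z)=\tfrac{nz}{(1+nz)(1+z/n)}$ (McIntosh's approximate identity): $\varphi_n(A)x\to x$ in $H_A$ by \eqref{FC_A} plus dominated convergence, and each $\varphi_n(A)x$ lies in the range of $\Lambda_k(A)^{-r}$ because $\varphi_n(z)\Lambda_k(z)^{r}$ is still bounded holomorphic on $S^\circ_\tht$ (the polynomial-type decay of $\varphi_n$ at $0$ and $\infty$ dominates the merely logarithmic growth of $\Lambda_k^r$), so $\varphi_n(A)x = \Lambda_k(A)^{-r}\big[(\varphi_n\cdot\Lambda_k^r)(A)x\big]\in\Lambda_k(A)^{-r}H$.

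The main obstacle, and the step deserving the most care, is the \emph{uniform-in-$s$ Dunford--Riesz estimate} $\|(\psi_s\cdot\Lambda_k^{\pm r})(A)\|\le C\min(s^\eps,s^{-\eps})$ with square-$\tfrac{ds}{s}$-integrable right side: one must check that inserting the factor $\Lambda_k(z)^{\pm r}$ (logarithmically growing when the exponent is $+r$) does not destroy membership in $\Psi(S^\circ_\tht)$ nor spoil the contour estimates — for the $+r$ case one needs the decay exponent $s$ in \eqref{Psi} for $\psi$ to be chosen to beat the $\Lambda_k^r$ growth, which is fine since any $\psi\in\Psi$ with some $s>0$ works and $\Lambda_k^r$ grows slower than any power. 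Everything else is bookkeeping with \eqref{FC_A}, \eqref{DuRi}, McIntosh's reproducing formula, and the convention $\|Tx\|_{TH}=\|x\|_H$.
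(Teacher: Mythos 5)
Your overall architecture is the same as the paper's (a direct square-function estimate giving $\|\Lambda_k(A)^{-r}y\|_A\le C\|y\|$, a duality argument with $A^*$ and $(H_A)^*=H_{A^*}$ for the reverse inequality, and an approximate-identity argument for density), but the central quantitative step is wrong as stated. You claim that the Dunford--Riesz contour estimate gives $\|(\psi_s\Lambda_k^{-r})(A)\|\le C\min(s^{\varepsilon},s^{-\varepsilon})$, with the power $\varepsilon$ ``coming from the exponent in \eqref{Psi}''. It does not: after the change of variable $u=s\rho$ the contour integral $\int_0^\infty|\psi(s\rho e^{\pm i\theta'})|\,d\rho/\rho$ is scale invariant, so the decay of $\psi$ produces no decay in $s$ whatsoever (for $A$ self-adjoint positive with full spectrum, $\|\psi_s(A)\|$ is constant in $s$). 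The only source of decay in $s$ is the factor $\Lambda_k(z)^{-r}\asymp(1+|\log|z||)^{-r}$ on the contour, and it yields only the logarithmic rate $\|(\psi_s\Lambda_k^{-r})(A)\|\le C(1+|\log s|)^{-r}$. Square-integrability of this majorant against $ds/s$ holds precisely when $2r>1$ --- which is where the hypothesis $r>\tfrac12$ must enter, and your argument never uses it. Indeed, your claimed bound applied with $r=0$ would give $\|y\|_A\le C\|y\|$ for every sectorial operator, hence (dualizing) $H_A=H$ always and a bounded $H^\infty$ calculus for every sectorial operator, contradicting the McIntosh--Yagi examples. So the step is not just loosely justified; the stated estimate is false and the theorem's hypothesis disappears with it.

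The repair is exactly the computation the paper performs: on the contour bound $|\Lambda_k(\rho e^{\pm i\theta})^{-r}|\le C(1+|\log\rho|)^{-r}$ (valid since $k\ne0$ keeps $\Lambda_k$ away from $0$), then either establish the pointwise bound $C(1+|\log s|)^{-r}$ by splitting the $u$-integral according to $|\log u|\lessgtr|\log s|/2$ and integrate its square, or, as the paper does, keep the weight inside, change variables and apply Minkowski's integral inequality, reducing everything to $\int_0^\infty(1+|\log u|)^{-2r}\,du/u<\infty$ for $r>\tfrac12$. With that correction your duality step (which is a reproducing-formula rendering of the paper's adjoint computation, modulo the harmless flip $\overline{\Lambda_k(\bar z)}=\Lambda_{-k}(z)$) and your density argument via $\varphi_n(A)x=\Lambda_k(A)^{-r}\big[(\varphi_n\Lambda_k^{r})(A)x\big]$ for $x\in H_c$ are sound and essentially coincide with the paper's proof.
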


\bigskip

In the above result, the spaces $\Lambda_k(A)^{\pm r}H$
must be understood in the sense prior to the theorem, for
$T=\Lambda_k(A)^{r}$ (or $T=\Lambda_k(A)^{-r}$). Also, the
statement could be alternatively written saying that the linear
operators $\Lambda_k(A)^{-r}: H\to H_A$ and  $\Lambda_k(A)^{-r}:
H_A\to H$ are bounded.

Notice also that the operator $i\sgn(k)\Lambda_k(A)$
is $\nu$-sectorial for any $\nu>\pi/2$. This follows from the Nollau's
estimate for the resolvent of $\Lambda_k(A)$; see for example
\cite[Proposition 3.5.3]{Haase}. The spectrum of
$i\sgn(k)\Lambda_k(A)$
does not contain the origin.
It follows that for any $k\ne0$,
$\Lambda_k(A)^{-r}$
is bounded on $H$.

Now let us introduce
\margp{frases reescritas}
the main notions needed for the construction of the
Nagy--Foia\c{s} model, adapted to the context
of a sectorial operator.
We start by recalling the part of this construction in
the form that was given in \cite{Yak, YaPa},  which will be the
most convenient for us.

Let $\Omega$ be a complex domain with a rectifiable boundary
$\partial \Omega$. For a Hilbert space $H$, let $E^2(\Omega; H)$
denotes  the \textit{vector-valued Hardy--Smirnov class}, that is,
the set of functions $u\in \mbox{Hol}\,(\Omega;H)$ such that there
exists an increasing sequence $\{\Omega_n\}$ of bounded domains
with smooth boundary satisfying
$\overline \Omega_n\subset
\Omega$, $\bigcup_n\Omega_n=\Omega$, and
$$
\sup_n\int_{\partial\Omega_n}\|u(z)\|_H^2\,|dz|<\infty.
$$
Functions in $E^2(\Omega; H)$ have strong non-tangential
boundary limit values a.e.\hskip1.5mm on $\partial\Omega$, which allow one to
identify this space with a closed subspace of
$L^2(\partial\Omega; H)$. This gives $E^2(\Omega; H)$ a Hilbert space structure.
%$
%\Vert \quad\Vert_{E^2(\Omega;
%H)}$, the space
%$E^2(\Omega; H)$ is a Hilbert space.
 We refer to \cite{Duren} and
\cite{SzNF} for a background.

Let $\omega$ be in $(0,\pi)$ the type of the sectorial operator
$A$. Fix an angle $\tht\in(\om,\pi)$. Throughout the paper, the
class $E^2(\Omega; H)$ will be considered for $\Omega=S^\circ_\tht$
and, in integrals over the boundary $\partial S_\theta$ of
$S_\theta$, we give $\partial S_\theta$ the parametrization
\margp{counter- clockwise}
counterclockwise around $S_\tht\cup\{\infty\}$. As indicated above, we
take in this case the operators $B:= 2\sqrt{A}$, $C:=\sqrt{A}$ and
Hilbert spaces $U=Y:=H$, $X:=H_A$, in
\margp{\quad $X:=H_A$}
the general framework of
\cite{Yak}, \cite{YaPa}. Then we get the \textit{observation}
operator $\cO: H\to \mbox{Hol}\,(\rho(A);H)$ associated with $A$,
given by
\begin{equation}
\label{def_cO}
(\cO x)(z)=\sqrt{A} (z-A)^{-1}x, \qquad z\in\rho(A), x\in H,
\end{equation}
and  the \textit{control} operator $W_\tht\colon
E^2(S_\tht^\circ,H)\to H$ for $A$ defined as
$$
W_\tht(u)
% :={1\over \pi i}\int_{\partial S_\theta}(\cO x)(u(\xi))\,d\xi
:=
{1\over \pi i}\int_{\partial
S_\theta}(\xi-A)^{-1}\sqrt{A}u(\xi)\,d\xi,
$$
for every $u\in E^2(S_\theta^\circ; H)$.

We shall need to show that $\cO$ and $W_\tht$ are bounded
as operators between suitable spaces. For this, set
$$
T_A:=\frac{\sqrt{A}}{1+A}.
$$
Since  $T_A=f(A)$ where
$f(z):=\sqrt{z}(1+z)^{-1}$ is in $\Psi(S^\circ_\tht)$, $T_A$ is bounded.
It is also clear that $\ker T_A=0$ and that  Im$T_A^n$ is dense in $H$ for every
$n>0$, \cite[p. 143]{Haase}.

\begin{definition} % Def 2.1
We define the spaces $H_n:=T_A^n H$, $n\in \mathbb{Z}$, by giving them
the meaning explained at the beginning of this section.
\end{definition}
If $n>0$, then $H_n$ is a dense linear subset in $H$.
If $n<0$, then $H$ is a dense linear subset of $H_n$.
We have a chain of dense imbeddings
$$
\dots\subset H_2\subset H_{1}\subset H_{0}=H\subset
H_{-1}\subset H_{-2}\subset \dots\quad.
$$
Then the mapping ${\mathcal O}$ extends to $H_{-1}$ by putting
$$
(\cO x)(z):=\frac{\sqrt{A}}{1+A}\,\frac{1+A}{z-A}\;x
%%%%  +\frac{1+z}{z-A}\frac{\sqrt{A}}{1+A}x
\qquad (z\in\rho(A), x\in H_{-1}),
$$
and $\cO:H_{-1}\to \hbox{Hol}(\rho(A); H)$ is clearly continuous.
This map is injective.
Indeed, if $x\in H_{-1}$ satisfies
\margp{\quad ${\mathcal O}_\theta$ --> $\mathcal O$}
$\mathcal O x(\lambda)=0$ for some
$\lambda\in \CC\backslash S_\theta$ then
$$
x=(\lambda-A)(\sqrt{A})^{-1}\cO x(\lambda)=0.
$$
It can be also proven that the linear mappings
$W_\tht: E^2(S_\theta^\circ; H)\to H_{-1}$,
$\mathcal O\colon H_{1}\to E^2(\CC\backslash S_\theta; H)$
\margp{\quad lo mismo}
are well-defined and continuous (see Proposition \ref{prop1}
below). Then we can introduce the appropriate control and
observation spaces for $A$.

\begin{definitions}
\label{ctrobs} \textit{The control space} for $A$ is the Hilbert space
$\Hctr$
obtained as the range of $W_\theta$ in $H_{-1}$ with the range norm,
$$
\Vert x\Vert_{A,\tht,\text{ctr}}:=\min\{ \Vert u \Vert_{E^2(S_\theta^\circ;
H)}\, ; \,\,
x=W_\theta(u)\}.
$$
Put $\cO_\tht x:=\cO x\vert_{\CC\setminus S_\tht}$. \textit{The observation space}
for $A$ is
$$
\Hobs
% (=H^{(2)}_{A, \theta})
:=\big\{x\in H_{-1}\,\, ;\,\, {\mathcal O}_\theta x\in E^2(\CC\backslash{S_\theta};
H)\big\},
$$
with the norm $\Vert x\Vert_{A,\tht,\text{obs}} := \Vert {\mathcal O}_\theta
x\Vert_{E^2(\CC\backslash{S_\theta}; H)}$.
% This space
% and the norm in it depend also on $\theta$.
\margp{he quitado este parrafo}
\end{definitions}

We have the following result.

%We shall see that the estimates which define the spaces $H_A$ and
%$\Hobs$ are essentially the same, so that  $H_A$ and $\Hobs$ are isomorphic and we can interpret automatically the square function space $H_A$ as an observation model space for $A$. Furthermore, we shall prove the following theorem.

\begin{theorem} \label{main}
The spaces $\Hctr$ and $\Hobs$
do not depend on $\theta$ and coincide with $H_A$ for any
$\theta \in (\omega, \pi)$. The norms in all these spaces are
mutually equivalent.
\end{theorem}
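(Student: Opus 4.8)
The plan is to establish the chain of equalities $\Hctr = H_A = \Hobs$ (with equivalence of norms), for a fixed $\tht\in(\om,\pi)$, and then observe that the middle space $H_A$ is manifestly independent of $\tht$, which forces the two outer spaces to be $\tht$-independent as well. The natural order is to compare each of the two outer spaces with $H_A$ separately; the two comparisons are dual to one another (observation for $A$ versus control for $A^*$, or via the pairing induced by $\cO$ and $W_\tht$), so in practice one does the work once and transports it.

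First I would treat the observation space. By definition $x\in\Hobs$ means $\cO_\tht x = \sqrt A(z-A)^{-1}x \in E^2(\CC\setminus S_\tht; H)$, and the $\Hobs$-norm is the $E^2$-norm of this function on $\partial S_\tht$. I would parametrize $\partial S_\tht$ by $z = t e^{\pm i\tht}$, $t>0$, and rewrite
$$
\|x\|_{A,\tht,\mathrm{obs}}^2 = \int_{\partial S_\tht}\bigl\|\sqrt A\,(z-A)^{-1}x\bigr\|^2\,|dz|
= \int_0^\infty \Bigl(\bigl\|\sqrt A\,(te^{i\tht}-A)^{-1}x\bigr\|^2 + \bigl\|\sqrt A\,(te^{-i\tht}-A)^{-1}x\bigr\|^2\Bigr)\,dt.
$$
Now the integrand is built from the function $\psi(z) := \sqrt z\,(e^{i\tht} - z)^{-1}$ (and its conjugate-angle partner) evaluated at $A/t$: indeed $\sqrt A\,(te^{i\tht}-A)^{-1} = t^{-1/2}\,\psi_t(A)$ up to the obvious scaling, where $\psi_t(z)=\psi(tz)$. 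One checks $\psi\in\Psi(S^\circ_{\tht'})$ for $\tht'$ slightly larger than $\tht$ — it decays like $|z|^{1/2}$ at $0$ and like $|z|^{-1/2}$ at $\infty$, and it is holomorphic on a sector strictly containing $S_\tht$ because the pole at $e^{i\tht}$ sits off that sector. Hence, after the change of variable $t\mapsto 1/t$ in the integral, the right-hand side becomes exactly $\int_0^\infty \|\psi_t(A)x\|^2\,\frac{dt}{t}$ for a (fixed, nonzero) $\psi\in\Psi(S^\circ_{\tht'})$, which is one of the admissible choices in \eqref{normMC}. Since all such choices give equivalent norms \cite{leMeW}, this yields $\|x\|_{A,\tht,\mathrm{obs}}\asymp\|x\|_A$, and in particular $\Hobs = H_A$ with equivalence of norms — including the identification of the underlying linear manifolds, because $\cO$ is injective on $H_{-1}$ and $H_A$ is the completion of $H_c$ in this norm.

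For the control space I would argue dually. The operator $W_\tht$ is, up to constants, the adjoint of the observation map for $A^*$ with respect to the $H$-pairing: for $u\in E^2(S_\tht^\circ;H)$ and $y\in H_1$, a contour computation using the Cauchy formula gives $\langle W_\tht u, y\rangle = \frac{1}{\pi i}\int_{\partial S_\tht}\langle u(\xi), \sqrt{A^*}(\bar\xi - A^*)^{-1}y\rangle\,d\xi = \langle u, \cO_\tht^{(A^*)} y\rangle_{E^2}$ after reparametrizing the boundary. Since $A^*$ is again sectorial of type $\om$ with the same resolvent bounds, the observation-space computation above applies to $A^*$ and shows $\cO_\tht^{(A^*)}\colon H_A^{*}\to E^2(\CC\setminus S_\tht;H)$ is bounded below onto its range. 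Standard duality (the range of an operator with the range norm is the dual of the range-closure of the adjoint, modulo kernels) then identifies $\Hctr$ with $H_{A}$: concretely, $\|x\|_{A,\tht,\mathrm{ctr}} = \sup\{|\langle x,y\rangle| : \|\cO_\tht^{(A^*)}y\|_{E^2}\le 1\} \asymp \sup\{|\langle x,y\rangle| : \|y\|_{A^*}\le 1\}$, and since $(H_A)^* = H_{A^*}$ under the $H$-pairing (this is the standard duality for McIntosh square-function spaces, see \cite{leMeW}), the last supremum is $\|x\|_A$. One should also check the elementary density/compatibility points: $H_c$ (or the dense set $H_1$, or $\Lambda_k(A)^{-r}H$ from Theorem \ref{thm log}) lies in the range of $W_\tht$, so the range norm is computed on a dense set and the identification of completions is legitimate.

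The main obstacle is the control-space step: making the duality between $W_\tht$ and $\cO^{(A^*)}$ rigorous requires care about which spaces the pairing lives on (the chain $\cdots\subset H_1\subset H\subset H_{-1}\subset\cdots$, the extension of $\cO$ to $H_{-1}$, and the fact that $W_\tht$ maps into $H_{-1}$ rather than $H$), plus a clean statement that the range of a bounded-below operator, equipped with the range norm, is isometrically the dual of the closure of the range of its Hilbert-space adjoint. The observation-space step, by contrast, is essentially a change of variables reducing everything to a known equivalence of square-function norms, so I would expect it to go through quickly once the right $\psi$ is written down; the only thing to verify there is that the pole of $\psi$ at $e^{\pm i\tht}$ can be pushed off $S_\tht$ by enlarging the half-angle slightly, so that $\psi$ genuinely belongs to $\Psi$ of a sector containing $S_\tht$.
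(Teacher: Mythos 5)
Your architecture (compare each of $\Hctr$, $\Hobs$ with $H_A$ and use that $H_A$ is $\tht$-free) is reasonable, and your observation-space computation is close in spirit to the paper's, but the control-space half has a genuine gap. The duality formula you invoke, $\Vert x\Vert_{A,\tht,\text{ctr}}=\sup\{|\langle x,y\rangle| : \Vert \cO^{(A^*)}_\tht y\Vert_{E^2}\le 1\}$, is not a "standard" fact: the quotient/range norm of $W_\tht$ is computed by the honest adjoint into $H_{-1}$, and replacing it by the $H$-pairing against the family $\{\cO^{(A^*)}_\tht y\}$ gives an equality only if these functions are norm-dense in the annihilator of $\Ker W_\tht$ inside $E^2(\CC\setminus S_\tht;H)$ with respect to the pairing \eqref{corch}. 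Without that density you obtain only the lower bound, i.e.\ the continuous inclusion $\Hctr\hookrightarrow H_A$; the reverse inclusion --- that every $x$ with finite square function equals $W_\tht u$ for some $u$ with $\Vert u\Vert_{E^2}\lesssim \Vert x\Vert_A$ --- is exactly the nontrivial surjectivity statement, and the missing density is essentially equivalent to identifying the annihilator of $\Ker W_\tht$ with $\mathcal H(\de_\al)$ and proving the observation-model isomorphism for $A^*$, i.e.\ it assumes what has to be proved. In the paper this is where the real work sits: the factorization $\cO_\tht W_\tht=-\mathcal J_{\wt\de_1}$ (Lemma \ref{37}), the analytic continuation of $\wt\de_\al-\wt\de_1$ (Proposition \ref{diff wt de}) giving the $\al$-independence of the Hankel-like operator (Proposition \ref{Hank}), and Lemma \ref{00}(4) are what produce, for each $x\in\Hobs$, an explicit preimage under $W_\tht$ (Proposition \ref{122}); your sketch contains no substitute for this step.

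Two further points in the observation half. First, your $\psi(z)=\sqrt z\,(e^{i\tht}-z)^{-1}$ does not belong to $\Psi(S^\circ_{\tht'})$ for $\tht'>\tht$: enlarging the half-angle puts the pole $e^{i\tht}$ \emph{inside} the sector. The fix is the opposite move --- take $\mu\in(\om,\tht)$, where the pole lies at positive distance from $S_\mu$, so $\psi\in\Psi_{1/2}(S^\circ_\mu)$, and then use the independence of the square-function norm of the pair $(\mu,\psi)$; this works only because $\tht>\om$ is strict. Second, and more substantively, identifying the $L^2(\partial S_\tht)$-norm of $\cO x$ with $\Vert x\Vert_A$ does not by itself show that $\cO x|_{\CC\setminus S_\tht}$ belongs to the Hardy--Smirnov class $E^2(\CC\setminus S_\tht;H)$, which is what membership in $\Hobs$ means; injectivity of $\cO$ on $H_{-1}$ is irrelevant to this. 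The paper first proves the comparability \eqref{roots} of the $L^2$ norms of $\cO x$ over all rays $\Gamma_\phi$, $\phi\in(\om,\pi]$, and then invokes van Winter's theorem to upgrade square-integrability on one boundary to $E^2$ membership of the exterior of every sector $S_\tht$, $\tht\in(\om,\pi)$; some argument of this kind must be supplied in your proof as well.
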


For $\al>0$ such that $\al\tht<\pi/2$ let $\de_\al$ be the operator-valued function
given by (\ref{delta-al}). The fractional power $A^\al$ is $\om\al$-sectorial
\cite{Haase}, so in particular
$\si(A^\al)\subset S_{\om\al}$. It follows that
$$
\de_\al(z):=\frac 1 \al \,(1- 2z^\al(A^\al+z^\al)^{-1}), \qquad z\in S_\tht,
$$
is an $ L(H)$-valued function on $S^\circ_\tht$
of class $H^\infty$.
It is also easy to see that
$\de_\al^{-1}(z)$ exists and is uniformly bounded for
$z\in\partial S_\tht$ with $z\ne0$.
Hence $\de_\al E^2(S_\theta^\circ; H)$ is a closed subspace of
$E^2(S_\theta^\circ; H)$.

For any $x\in H_A$, we have
$y:=(\sqrt A)^5(1+A)^{-3}x=\sqrt A(1+A)^{-1}(1-(1+A)^{-1})^2x\in H$, which means that $Ax=T_A^{-3}y$ is defined at least as a member of $H_{-3}$. We
define a (possibly unbounded) operator $\wt A$ on
$H_A$ by
$$
D(\wt A):=\{x\in H_A\,\, ; \,\, Ax\in H_A\},
$$
and $\wt Ax=Ax, $  $x\in D(\wt A)$. Then $\wt A$ is an
$\omega$-sectorial operator with trivial kernel and
$\sigma(A)=\sigma(\tilde A)$,
see Proposition \ref{Prop:spectr} below.  The following
theorem provides a concrete model of Nagy--Foia\c{s} type
of $\wt A$, which can be called a control model according to
the terminology of \cite{YaPa}.

\begin{theorem} % 2.4
\label{Ctr-mod}
Suppose, as above, that
$\tht\in(\om,\pi)$, $\al>0$, $\al\tht<\frac \pi2$.
%For any $\omega$, $\alpha $ and $\theta$ as above,
Then
we have the following.
\begin{itemize}
\item[(i)] $\operatorname{Ker} W_\theta=\delta_\alpha E^2(S_\theta^\circ; H)$.

\item[(ii)]
Consider
\margp{**}
the quotient space
$$
{\mathcal Q}(S_\theta^\circ, \delta_\alpha):=E^2(S_\theta^\circ; H)/
\delta_\alpha E^2(S_\theta^\circ; H)
$$
and the corresponding
quotient operator
$\wh W_\theta: {\mathcal Q}(S_\theta^\circ, \delta_\alpha)\to H_A$.
Then
$\wh W_\tht$
is an isomorphism of
${\mathcal Q}(S_\theta^\circ, \delta_\alpha)$ onto
$H_A$.

\item[(iii)]
Let
\margp{**}
$\wh M_z$ be the quotient multiplication operator  on
\margp{he cambiado el orden: $\sigma=[s], \rho=[r], s(z)=z r(z)$}
${\mathcal Q}(S_\theta^{\circ}, \de_\al)$ given by
$$
\begin{aligned}
\wh M_z
\rho\:=\sigma,
\quad & \text{if }
\;\; \exists\, r, s\in E^2(S^\circ_\tht; H) \\
& \text{such that } \sigma=[s], \rho=[r], \; s(z)=z r(z)
\end{aligned}
$$
(here $[r]\in {\mathcal Q}(S_\theta^\circ, \de_\al)$
denotes the coset that corresponds to $r$).
We put $D(\wh M_z)$ to be the set of
all cosets $[r]$ such that there exist $r$, $s$ as above.
Then $\wh M_z$ is
well-defined. It is a closed operator
on ${\mathcal Q}(S_\theta, \de_\al)$
with dense domain, and
$\widehat{W_\theta}$ intertwines
$\wh M_z$ with ~$\wt A$:

$$
\begin{CD}
D(\wh M_z)\subset {\mathcal Q}(S_\theta^\circ, \de_\al)@>\wh M_z>>{\mathcal Q}(S_\theta^\circ, \de_\al)\\
@VV\widehat{W_\theta}V @VV\widehat{W_\theta}V  \\
D(\wt A)\subset H_A @>\wt A>>H_A
\end{CD}
$$

\end{itemize}
\end{theorem}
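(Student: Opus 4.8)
The plan is to deduce (ii) and (iii) from Theorem~\ref{main} and part~(i), so that the real work lies in (i). For (ii): since $\de_\al E^2(S^\circ_\tht;H)$ is closed, $\mathcal{Q}(S^\circ_\tht,\de_\al)$ is a Hilbert space; by (i) the induced map $\wh W_\tht$ is injective, by Theorem~\ref{main} $W_\tht$ maps onto $\Hctr=H_A$, so $\wh W_\tht$ is a continuous bijection, and its inverse is bounded because the quotient norm on $\mathcal{Q}(S^\circ_\tht,\de_\al)$ is by definition $\|\cdot\|_{A,\tht,\mathrm{ctr}}$, equivalent to $\|\cdot\|_A$ again by Theorem~\ref{main}. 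For the inclusion $\de_\al E^2(S^\circ_\tht;H)\subseteq\Ker W_\tht$ in (i), I would show that the integrand $(\xi-A)^{-1}\sqrt A\,\de_\al(\xi)v(\xi)$ of $W_\tht(\de_\al v)$ extends holomorphically in $\xi$ to all of $S^\circ_\tht$: the factor $A^\al-\xi^\al$ in $\de_\al(\xi)$ cancels the pole of $(\xi-A)^{-1}$ at $\xi\in\sigma(A)$, in the sense made precise by the jointly holomorphic divided difference $g(z,\xi):=(z^\al-\xi^\al)/(z-\xi)$ on $(\CC\setminus(-\infty,0])^2$, while the only other potential pole of $\de_\al$, at $\xi^\al\in-\sigma(A^\al)\subset-S_{\om\al}$, lies outside $S_\tht$ precisely because $\al\tht<\pi/2$. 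Then Cauchy's theorem, with the decay at $0$ and $\infty$ furnished by the estimates behind Proposition~\ref{prop1}, gives $W_\tht(\de_\al v)=0$.

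The heart of (i) is the inclusion $\Ker W_\tht\subseteq\de_\al E^2(S^\circ_\tht;H)$. Let $W_\tht u=0$, so $\cO(W_\tht u)\equiv0$ on $\CC\setminus S_\tht$. Expanding $\cO(W_\tht u)(\la)=\sqrt A(\la-A)^{-1}W_\tht u$ by the resolvent identity $(\la-A)^{-1}(\xi-A)^{-1}=(\xi-\la)^{-1}[(\la-A)^{-1}-(\xi-A)^{-1}]$, using $\sqrt A(\xi-A)^{-1}\sqrt A=A(\xi-A)^{-1}=\xi(\xi-A)^{-1}-I$ (all manipulations carried out in the scale $H_{-k}$), and discarding the terms $\int_{\partial S_\tht}u(\xi)(\xi-\la)^{-1}\,d\xi=0$ (Cauchy, $\la\notin S_\tht$), one is left with $\int_{\partial S_\tht}\xi(\xi-A)^{-1}u(\xi)(\xi-\la)^{-1}\,d\xi=0$ for every $\la\notin S_\tht$. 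Put $\psi(\xi):=\xi(\xi-A)^{-1}u(\xi)$, holomorphic on $S^\circ_\tht\setminus\sigma(A)$ with $L^2$ boundary values on $\partial S_\tht$. Pushing $\partial S_\tht$ inward to $\partial S_{\om'}$ for $\om<\om'<\tht$, the Cauchy transform of $\psi$ over $\partial S_{\om'}$ vanishes on $\CC\setminus S_\tht$, hence---being holomorphic on the connected set $\CC\setminus S_{\om'}$---vanishes identically there; by the Plemelj jump formula $\psi|_{\partial S_{\om'}}$ is then the boundary value of a function holomorphic in $S^\circ_{\om'}$, which glued to $\psi$ itself on $\{|\arg\xi|>\om'\}\cap S^\circ_\tht$ exhibits $\psi$, and hence $(\xi-A)^{-1}u(\xi)$, as extending holomorphically across $\sigma(A)$ to all of $S^\circ_\tht$. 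Feeding this into $\de_\al^{-1}u=\al u+2\al\,\xi^\al(A^\al-\xi^\al)^{-1}u$ and using $(A^\al-\xi^\al)^{-1}u(\xi)=-n(A,\xi)(\xi-A)^{-1}u(\xi)$ with $n(z,\xi):=(\xi-z)/(\xi^\al-z^\al)$ holomorphic in $z$ near $\sigma(A)$, holomorphic in $\xi\in S^\circ_\tht$, and of at most polynomial growth in $z$, one gets that $\de_\al^{-1}u$ extends holomorphically to $S^\circ_\tht$; since $\de_\al^{-1}$ is uniformly bounded on $\partial S_\tht$, this extension has $L^2$ boundary values, so it lies in $E^2(S^\circ_\tht;H)$ and $u\in\de_\al E^2(S^\circ_\tht;H)$.

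For (iii) I would argue as follows. First, $D(\wt A)=(1+A)^{-1}H_A$, because $(1+z)^{-1}$ and $z(1+z)^{-1}$ lie in $H^\infty(S^\circ_\tht)$ and so act boundedly on $H_A$ by (\ref{FC_A}). Next, the partial fraction $(1+A)^{-1}(\xi-A)^{-1}=(\xi+1)^{-1}[(1+A)^{-1}+(\xi-A)^{-1}]$ together with $\int_{\partial S_\tht}u(\xi)(\xi+1)^{-1}\,d\xi=0$ (Cauchy, $-1\notin S_\tht$) gives $(1+A)^{-1}W_\tht u=W_\tht((1+\xi)^{-1}u)$; since $(1+\xi)^{-1}E^2(S^\circ_\tht;H)=\{r\in E^2(S^\circ_\tht;H):\xi r\in E^2(S^\circ_\tht;H)\}=:D(M_z)$ (the domain of multiplication by $z$ on $E^2(S^\circ_\tht;H)$), this yields $D(\wt A)=W_\tht(D(M_z))=\wh W_\tht(D(\wh M_z))$, $D(\wh M_z)$ being the image of $D(M_z)$ under the canonical projection. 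Finally, for $r\in D(M_z)$ the identity $A(\xi-A)^{-1}-\xi(\xi-A)^{-1}=-I$ and $\int_{\partial S_\tht}r(\xi)\,d\xi=0$ (Cauchy; $r,\xi r\in E^2(S^\circ_\tht;H)$ make $\int_{\partial S_\tht}\|r(\xi)\|\,|d\xi|<\infty$) give $W_\tht(\xi r)=A\,W_\tht r=\wt A\,W_\tht r$, whence $\wh W_\tht\wh M_z[r]=W_\tht(\xi r)=\wt A\,\wh W_\tht[r]$; this simultaneously shows $\wh M_z$ is well defined and identifies $\wh M_z=\wh W_\tht^{-1}\,\wt A\,\wh W_\tht$, so $\wh M_z$ is closed and densely defined because $\wt A$ is (Proposition~\ref{Prop:spectr}) and $\wh W_\tht$ is an isomorphism.

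The step I expect to be the main obstacle is $\Ker W_\tht\subseteq\de_\al E^2(S^\circ_\tht;H)$, and within it the passage from ``$(\xi-A)^{-1}u(\xi)$ has removable singularities on $\sigma(A)$'' to ``$\de_\al^{-1}u\in E^2(S^\circ_\tht;H)$'': one must control $\xi^\al(A^\al-\xi^\al)^{-1}u(\xi)$ near $\sigma(A)$ despite the possibly unbounded factor $n(A,\xi)$, and one must make every Cauchy-type identity above rigorous within the scale of spaces $H_{-k}$. This is where the $\al$-independence of the Hankel-type operator $\mathcal J_\al$ and the preparatory results of Sections~\ref{complex}--\ref{s aux} should do the work, keeping the whole argument uniform in $\al$.
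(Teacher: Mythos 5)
Parts (ii) and (iii) of your proposal are sound and essentially coincide with the paper's argument: (ii) is exactly Proposition \ref{inclusions}(1) combined with Theorem \ref{main}, and your derivation of (iii) via $(1+A)^{-1}W_\tht u=W_\tht\big((1+\xi)^{-1}u\big)$, $W_\tht(\xi r)=A\,W_\tht r$ for $r\in D(M_z)$, and the resulting similarity $\wh M_z=\wh W_\tht^{-1}\wt A\,\wh W_\tht$ is a fleshed-out version of the paper's intertwining argument (the paper quotes the analogous computation from \cite{YaPa}); in particular your observation that $W_\tht(\xi(r-r'))=A\,W_\tht(r-r')=0$ gives well-definedness of $\wh M_z$ is precisely the paper's remark that $M_z\big(D(M_z)\cap\Ker W_\tht\big)\subset\Ker W_\tht$. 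Your proof of the easy inclusion $\de_\al E^2(S^\circ_\tht;H)\subseteq\Ker W_\tht$ (cancelling the pole of $(\xi-A)^{-1}$ by the divided difference and using Cauchy's theorem) is a reasonable alternative, although the paper never needs it separately.

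The genuine gap is where you say it is: the inclusion $\Ker W_\tht\subseteq\de_\al E^2(S^\circ_\tht;H)$. Your route gets as far as showing that $\xi(\xi-A)^{-1}u(\xi)$ has vanishing exterior Cauchy transform, but the passage from there to $\wt\de_\al u\in E^2(S^\circ_\tht;H)$ via $(A^\al-\xi^\al)^{-1}u(\xi)=-n(A,\xi)(\xi-A)^{-1}u(\xi)$ does not close: $n(A,\xi)$ is an unbounded operator (of order $1-\al$ at infinity), the holomorphic extension of $(\xi-A)^{-1}u(\xi)$ across $\sigma(A)$ need not take values in its domain, and no estimate is offered for $\xi^\al(A^\al-\xi^\al)^{-1}u(\xi)$ as $\xi$ approaches $\sigma(A)\cap S^\circ_\tht$. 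Moreover, the tools you appeal to do not apply in the form you need them: Propositions \ref{diff wt de} and \ref{Hank} are boundary statements, namely that $\wt\de_\al-\wt\de_1$ extends to $H^\infty\big(S^\circ_\tht;L(H)\big)$ and hence $\mathcal J_{\wt\de_\al}=\mathcal J_{\wt\de_1}$; they say nothing about pointwise regularity of $n(A,\xi)F(\xi)$ inside the sector. The paper closes the loop entirely at the boundary: by Lemma \ref{37}, $\cO_\tht W_\tht=-\mathcal J_{\wt\de_1}$, so $W_\tht u=0$ gives $P_{out}(\wt\de_1 u)=0$ (using injectivity of $\cO_\tht$ on $H_{-1}$); since $(\wt\de_\al-\wt\de_1)u\in E^2(S^\circ_\tht;H)$ by Proposition \ref{diff wt de}, also $P_{out}(\wt\de_\al u)=0$, i.e.\ $\wt\de_\al u\in E^2(S^\circ_\tht;H)$, and then $u=\de_\al(\wt\de_\al u)\in\de_\al E^2(S^\circ_\tht;H)$ (this is Lemma \ref{00}(4) plus Proposition \ref{Hank}). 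Rewriting your step in this boundary-value form, rather than via interior analytic continuation and the functional calculus for $n(\cdot,\xi)$, is what is needed to make part (i) complete.
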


\medskip
In the above result, one can always put $\al=\frac 12$. If $\tht<\frac \pi 2$, then one
can also take $\al=1$.

%
%
%As we have seen, in the control model given by Theorem \ref{Ctr-mod} the realization space of $\wt A$ is presented in the form of a quotient (Hilbert) space. In this sense, the control model is the closest one to the original Nagy--Foia\c{s} model
%for contractions (see the Introduction). It is to be noticed that, in our setting, we obtain similarity but not unitary
%equivalence of $\wt A$ and the multiplication operator.  Also, the generalized characteristic function $\delta_\alpha$ is far from being unique, see
%\cite[Section 11]{Yak} for a discussion.
%
%

The control
model is the closest one to the original Nagy--Foia\c{s} model, because
it
\margp{he cambiado todo este parrafo}
represents
 $\wt A$
as an operator of multiplication by the independent
variable on
a quotient Hilbert space, see
the Introduction.
However, to the contrary to the original
Nagy--Foia\c{s} setting,
we obtain similarity but not unitary equivalence of
$\wt A$ and the multiplication operator.  In general,
the generalized
characteristic function of $\wt A$
depends on the choice of auxiliary operators $B$ and $C$
and is far from being unique,
see \cite[Section 11]{Yak} for a discussion.

This theorem
\margp{This theorem}
implies that the operator
$\wt A$ possesses a unique weak-star continuous
$H^\infty$-functional calculus in each angle $S_\theta$, if $\theta>\omega$.
Indeed, the unique weak-star continuous $H^\infty$-functional calculus
for $\widehat{M_z}$ is
$$
f(\wh M_z)=\wh M_{f(z)}, \qquad f\in H^{\infty}(S_\theta).
$$
It also implies that there exists a normal dilation of $\wt A$, whose spectrum is
contained in $\partial S_\tht$. We refer to \cite{Fr_Weis_dils}
for dilation results in the context of operators on UMD spaces.

\medskip

The next result provides us with another explicit realization of the operator
$\wt A$, which is called the observation model for $\wt A$, as in \cite{YaPa}.

For $\tht\in(\omega,\pi)$ and $\al$ such that $0<\al<(\pi/2\tht)$, we introduce the model space
$$
{\mathcal H}(\delta_\alpha, S^\circ_\theta)
:=\{v\in E^2(\CC\backslash S_\theta;
H)\,\, ; \,\, \delta_\alpha v\vert_{ \partial S_\theta}\in
E^2({S^\circ_\theta}; H)\}.
$$
In the remainder of this section we assume $\tht$ to be fixed, and
write ${\mathcal H}(\delta_\alpha)$ to denote ${\mathcal
H}(\delta_\alpha, S^\circ_\theta)$.
In fact,
we shall show that, in a certain sense, the above space
does not depend
on $\tht\in(\omega,\pi)$.
\begin{theorem} % 2.5
\label{Obs-mod}

Under the above conditions ${\mathcal O}_\theta\vert_{ H_A}$ is an isomorphism of $H_A$ onto
${\mathcal H}(\delta_\alpha)$. Moreover, one has the intertwining formula
${\mathcal O}_\theta D(\wt A)=D(M_z^T), $
$$
{\mathcal O}_\theta \wt Ax=M_z^T {\mathcal O}_\theta x, \qquad
x\in D(\wt A):
$$
\vskip.1cm
$$
\begin{CD}
D(\wt A)\subset H_A@>\wt A>>H_A\\
@VV{\mathcal O}_{\theta}V @VV{\mathcal O}_{\theta}V  \\
D(M^T_z)\subset {\mathcal H}(\delta_\alpha)@>M^T_z>>{\mathcal H}(\delta_\alpha)
\end{CD}
$$
%\medskip
\noindent where $D(M^T_z):=\{f\in {\mathcal H}(\delta_\alpha)\,\, ;\,\, \exists c\in H
\hbox{ such that } M_zf-c\in {\mathcal H}(\delta_\alpha)\}, $ and
\begin{equation}
\label{M_z^T}
M_z^Tf :=M_z f-c\in {\mathcal H}(\delta_\alpha), \qquad f \in D(M^T_z).
\end{equation}
\end{theorem}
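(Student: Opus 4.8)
The plan is to build this statement on top of the control model (Theorem \ref{Ctr-mod}) together with the identification $\Hobs = H_A = \Hctr$ from Theorem \ref{main}, and to transfer the intertwining relation from $\wh M_z$ to its formal transpose $M_z^T$. First I would show that $\cO_\tht|_{H_A}$ maps $H_A$ \emph{into} $\mathcal H(\de_\al)$ and is bounded: for $x\in H_A$ the function $\cO_\tht x$ lies in $E^2(\CC\setminus S_\tht;H)$ by the very definition of $\Hobs$ (Definitions \ref{ctrobs}) and the equality $\Hobs=H_A$; the extra requirement that $\de_\al\,\cO_\tht x|_{\partial S_\tht}$ extend to an element of $E^2(S^\circ_\tht;H)$ is the content of the ``observation side'' of the model construction in \cite{Yak}, and I would verify it by the resolvent computation $\sqrt A\,(z-A)^{-1}$ against $\de_\al(z)=\frac1\al\,(A^\al-z^\al)(A^\al+z^\al)^{-1}$, using that $\de_\al\in H^\infty(S^\circ_\tht;L(H))$ and $\de_\al^{-1}$ is bounded on $\partial S_\tht$. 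Injectivity of $\cO_\tht$ is already recorded in the excerpt (if $\cO x(\la)=0$ then $x=(\la-A)(\sqrt A)^{-1}\cO x(\la)=0$), so the crux is \emph{surjectivity} with bounded inverse, i.e. that $\cO_\tht$ is an isomorphism $H_A\to\mathcal H(\de_\al)$.

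For surjectivity I would set up a duality/pairing between the control space $\mathcal Q(S^\circ_\tht,\de_\al)$ and the observation space $\mathcal H(\de_\al)$, realized by a Hankel-type operator (the operator $\mathcal J_\al$ advertised in the Introduction and developed in Section \ref{like}), and use that the Cauchy/Riesz projections identify $E^2(\CC\setminus S_\tht;H)$ with the annihilator of $\de_\al E^2(S^\circ_\tht;H)$ inside $L^2(\partial S_\tht;H)$, so that $\mathcal H(\de_\al)$ is, up to bounded isomorphism, a copy of $\mathcal Q(S^\circ_\tht,\de_\al)$. Since $\wh W_\tht:\mathcal Q(S^\circ_\tht,\de_\al)\to H_A$ is an isomorphism by Theorem \ref{Ctr-mod}(ii), composing with this identification yields that $\cO_\tht:H_A\to\mathcal H(\de_\al)$ is onto with bounded inverse; the norm equivalences come for free from Theorem \ref{main}. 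Concretely, given $v\in\mathcal H(\de_\al)$ one reconstructs $x$ by a Cauchy integral over $\partial S_\tht$ against $(\xi-A)^{-1}\sqrt A$ (the same kernel as in $W_\tht$) and checks $\cO_\tht x=v$ by contour deformation and the resolvent identity.

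It then remains to identify the domains and the action of $\wt A$ on the model side. On $H_A$, $\wt A$ is multiplication by the resolvent variable in the sense that $\cO_\tht(\wt A x)(z)=\sqrt A(z-A)^{-1}\wt A x = z\,\sqrt A(z-A)^{-1}x - \sqrt A x = z\,(\cO_\tht x)(z) - c$ with $c=\sqrt A\,x\in H$, using the algebraic identity $A(z-A)^{-1}=z(z-A)^{-1}-I$; this is exactly the formula $M_z^T f = M_z f - c$ of \eqref{M_z^T}, and it simultaneously shows $\cO_\tht D(\wt A)\subseteq D(M_z^T)$ and the intertwining $\cO_\tht\wt A = M_z^T\cO_\tht$. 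For the reverse inclusion $D(M_z^T)\subseteq\cO_\tht D(\wt A)$, I would take $f\in D(M_z^T)$, write $f=\cO_\tht x$ with $x\in H_A$ by the surjectivity just proved, and use $M_z f - c\in\mathcal H(\de_\al)$ together with injectivity of $\cO_\tht$ to recover that $\sqrt A(z-A)^{-1}(Ax)$ lies in the model space, hence $Ax\in H_A$, i.e. $x\in D(\wt A)$. Closedness of $M_z^T$ and density of its domain can be read off from the corresponding (already established) properties of $\wh M_z$ transported through the isomorphism, or argued directly as for $\wh M_z$ in Theorem \ref{Ctr-mod}(iii).

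The main obstacle, as I see it, is the surjectivity of $\cO_\tht$ onto $\mathcal H(\de_\al)$ with bounded inverse: this is where the genuine analytic input sits, namely the Hankel-operator machinery of Section \ref{like} and the boundary-value/Cauchy-projection analysis on the sector $\partial S_\tht$ needed to pair the control and observation pictures. Everything else — the $H^\infty$ bound on $\de_\al$, injectivity of $\cO_\tht$, the intertwining identity, and the domain bookkeeping — is either already in the excerpt or follows from the resolvent identity $A(z-A)^{-1}=z(z-A)^{-1}-I$ and routine contour manipulation.
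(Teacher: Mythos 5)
Your proposal is correct and follows essentially the same route as the paper: surjectivity of $\cO_\tht|_{H_A}$ onto ${\mathcal H}(\de_\al)$ is obtained exactly as in the paper's proof, by combining the Hankel-operator isomorphism of Lemma \ref{00}(4), the factorization $\cO_\tht W_\tht=-{\mathcal J}_{\wt\de_1}$ of Lemma \ref{37}, the $\al$-independence of Proposition \ref{Hank}, and Theorem \ref{main}. The only (harmless) deviation is that you verify the intertwining directly at the generator level via $A(z-A)^{-1}=z(z-A)^{-1}-I$ with $c=\sqrt{A}\,x$, whereas the paper intertwines the resolvents using the Hilbert identity and formula \eqref{resolv}; both arguments are routine and equivalent.
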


\medskip
We call $M^T_z$ the observation model operator
and ${\mathcal H}(\delta_\alpha)$ the observation model space.
Notice that the above definition \eqref{M_z^T} of
$M_z^T$ is correct, because the only constant
$c\in H$ that belongs to
${\mathcal H}(\delta_\alpha)$ is zero.

The formula for the resolvent of $M_z^T$ is given by
\begin{equation}
\label{resolv}
\left((M_z^T-\lambda)^{-1}f\right)(w)={f(w)-f(\lambda)\over
w-\lambda}, \qquad \lambda\not \in \sigma(\wt A).
\end{equation}
\margp{he quitado una frase}
%The above results, and in particular  Theorem \ref{main},
%can be interpreted in terms of duality of the models, see
%\cite[Section 5]{YaPa}.

 \bigskip

In order to state our next result, first we reproduce some of the
known equivalent conditions for the existence of the $H^\infty$
calculus for ~$A$.

%One of the main results of \cite{CDMcY}  gives equivalent
%conditions for the equality $H=H_A$, with equivalence of norms,
%see \cite[Theorem 2.4]{CDMcY} and \cite[Theorem 2.2]{AuMcIntNahm}.
%Other equivalent conditions  follow from \cite[Theorem
%1.1]{leMe3}.

\begin{theoremFC}
Let $A$ be a one-to-one operator of type $\om$,
where $0<\om<\pi$. Then the following statements are equivalent.
\begin{itemize}
\item[(a)] $A$ admits a bounded $\Hnty(S^\circ_\mu)$-functional calculus for
all $\mu>\om$;

\item[(b)] $A$ admits a bounded $\Hnty(S^\circ_\mu)$-functional calculus for
some $\mu>\om$;

\item[(c)]
$\{A^{is}\vert s\in \RR\}$ is a $C_0$-group and
for any $\mu>\om$, there exists
$c_\mu$ such that $\|A^{is}\|\le c_\mu e^{\mu|s|}$, $s\in\RR$;

\item[(d)]
$H=H_A$, and the norms in these two spaces are equivalent.

\end{itemize}

In the case that $\displaystyle{\omega <{\pi/2}}$, the above conditions are equivalent to

\begin{itemize}

\item[(e)] for any $\tht\in (\om, \pi/2)$, $A$ is similar to a
$\tht$--accretive operator.

\item[(f)] there exists $\tht \in (\om, \pi/2)$
such that $A$ is similar to a $\tht$--accretive operator;
\end{itemize}
\end{theoremFC}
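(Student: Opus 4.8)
The plan is to place $(a)$, $(b)$, $(d)$ in one equivalence class first, then attach $(c)$, and finally, under the extra hypothesis $\om<\pi/2$, attach $(e)$ and $(f)$ as well; all of these implications are classical, so the proof is essentially a matter of assembling known arguments. The core equivalence $(a)\Leftrightarrow(b)\Leftrightarrow(d)$ is already contained in the discussion following \eqref{FC_A}: by McIntosh's quadratic-estimate theorem a bounded $H^\infty(S^\circ_\mu)$ calculus — even for a single $\mu>\om$ — forces $H_A=H$ with equivalent norms, i.e.\ $(d)$; conversely, since $H_A$ and $\|\cdot\|_A$ are independent of the sector, $(d)$ together with \eqref{FC_A} yields boundedness of $f\mapsto f(A)$ on $H$ for every $f\in H^\infty(S^\circ_\tht)$ and every $\tht>\om$, which is $(a)$; and $(a)\Rightarrow(b)$ is trivial. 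See \cite{McInt1, McIYa, leMeW, Haase}.

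For $(b)\Leftrightarrow(c)$: once $(a)$ is known (via the cycle above), the implication $\Rightarrow$ is immediate, because $z\mapsto z^{is}$ lies in $H^\infty(S^\circ_\mu)$ with sup-norm $e^{\mu|s|}$, so $\|A^{is}\|\le c_\mu e^{\mu|s|}$ for every $\mu>\om$; the group identity $A^{is}A^{it}=A^{i(s+t)}$ comes from multiplicativity of the functional calculus and strong continuity of $s\mapsto A^{is}$ from the convergence lemma for the calculus (\cite[Ch.~3]{Haase}). The converse $(c)\Rightarrow(b)$ is the Cowling–Doust–McIntosh–Yagi / Prüss–Sohr transference result: for $f\in\Psi(S^\circ_\mu)$ one writes $f(A)=\int_\RR k_f(s)\,A^{is}\,ds$ as a Fourier–Mellin integral, and the bound $\|A^{is}\|\le c_\mu e^{\mu|s|}$ lets one transfer through the $C_0$-group $\{A^{is}\}$ the Plancherel estimate for the multiplier attached to $f$ on a horizontal strip, obtaining $\|f(A)\|\le C_{\mu'}\|f\|_{\infty,\mu'}$ for each $\mu'>\mu$; see \cite{AuMcIntNahm, Haase}. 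Since $\mu>\om$ is arbitrary, this reproduces $(a)$.

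In the regime $\om<\pi/2$ we add $(e)\Rightarrow(f)$ (trivial) and the two substantive arrows $(f)\Rightarrow(b)$ and $(a)\Rightarrow(e)$. For $(f)\Rightarrow(b)$, recall that a $\tht$-accretive operator $B$ with $\tht<\pi/2$ admits a bounded $H^\infty(S^\circ_\mu)$ calculus for every $\mu\in(\tht,\pi/2)$ — part of the classical theory of $m$-accretive operators (see \cite{leMeW, Haase, AuMcIntNahm}) — so if $A=S^{-1}BS$ with $B$ being $\tht$-accretive, then $f(A)=S^{-1}f(B)S$ is bounded for such $f$, which gives $(b)$. For $(a)\Rightarrow(e)$, fix a target angle $\tht\in(\om,\pi/2)$: the idea is to equip $H$ with an equivalent Hilbertian inner product $\langle x,y\rangle_1:=\int_0^\infty\langle\psi_t(A)x,\psi_t(A)y\rangle\,dt/t$ of square-function type (equivalence with the original inner product being precisely $(d)$, which we have), with $\psi$ chosen so that a direct computation based on the Dunford–Riesz formula \eqref{DuRi} shows $\operatorname{Re}\big(e^{\mp i(\pi/2-\tht)}\langle Ax,x\rangle_1\big)\ge0$ for all $x\in D(A)$; then $A$ is $\tht$-accretive for $\langle\cdot,\cdot\rangle_1$. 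This is the line of argument of Le Merdy \cite{Lemerdy} and Auscher–McIntosh–Nahmod \cite{AuMcIntNahm}.

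The genuinely non-routine steps are the transference argument in $(c)\Rightarrow(b)$ — where operator-theoretic harmonic analysis really enters — and, in $(a)\Rightarrow(e)$, producing a single square-function $\psi$ (and controlling the sign of the resulting oscillatory integral) that pins the accretivity angle down to the prescribed $\tht$, rather than merely to some angle below $\pi/2$. Everything else reduces to bookkeeping with the Dunford–Riesz calculus, the estimate \eqref{FC_A}, and the $\tht$-independence of $H_A$.
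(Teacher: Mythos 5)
The paper itself does not prove Theorem FC; it is stated as a reproduction of known results, with (a)--(d) attributed to McIntosh \cite{McInt1} and the equivalence with (e), (f) delegated to \cite[Theorem 7.3.9]{Haase}. Your treatment of (a), (b), (d) (McIntosh's quadratic-estimate theorem plus \eqref{FC_A} and the $\tht$-independence of $H_A$) and of (b)$\Leftrightarrow$(c) (imaginary powers, Mellin/transference with Plancherel on the Hilbert space) is consistent with what those citations contain, and (e)$\Rightarrow$(f) and (f)$\Rightarrow$(b) are fine. So up to (d) your route and the paper's are essentially the same, modulo the fact that you sketch the classical proofs rather than cite them.

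The genuine gap is in (a)$\Rightarrow$(e), exactly the step the paper hands off to Haase. You claim that one can choose $\psi$ so that the square-function inner product $\langle x,y\rangle_1=\int_0^\infty\langle\psi_t(A)x,\psi_t(A)y\rangle\,dt/t$ satisfies $\operatorname{Re}\bigl(e^{\mp i(\pi/2-\tht)}\langle Ax,x\rangle_1\bigr)\ge 0$ by ``a direct computation''. No such computation is known, and it is not in the sources you cite: what the square-function renorming gives (e.g.\ with $\psi(z)=\sqrt{z}\,e^{-z}$, where $\operatorname{Re}\langle Ax,x\rangle_1$ reduces to a boundary term $\tfrac12\|\sqrt{A}x\|^2$) is only accretivity, i.e.\ numerical range in the closed right half-plane, not in $S_\tht$ for a prescribed $\tht\in(\om,\pi/2)$; applying the same trick to the rotated operators $e^{\pm i(\pi/2-\tht)}A$ produces two different inner products, and there is no obvious way to merge them into one. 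The actual passage from the calculus (or from the BIP bound in (c)) to similarity with a $\tht$-accretive operator goes through Le Merdy's similarity theorem \cite[Theorem 1.1]{leMe3} after a Hilbertian renorming of the group $\{A^{is}\}$ achieving $\|A^{is}\|\le e^{\tht|s|}$ with constant one, or through \cite[Theorem 7.3.9]{Haase}; this is a substantive theorem (Le Merdy's proof uses complete boundedness, as the paper itself remarks), not bookkeeping with the Dunford--Riesz formula. You flag this step as ``non-routine'' but leave it unproved, so the proposal is incomplete precisely there; the honest fix is to cite \cite[Theorem 7.3.9]{Haase} or \cite{leMe3} for (e), (f), as the paper does.
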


The equivalence of the conditions (a)--(d) was proved by McIntosh in \cite{McInt1}.
The fact that (e) and (f) are equivalent to each of
the conditions (a)--(d), follows from \cite[Theorem 7.3.9]{Haase}.

In \cite[Theorem 1.1]{leMe3}, Le Merdy proves
that $A$ is similar to an $\omega$-accretive operator if and
only if there exists and invertible operator $S\in L(H)$
such that $\|S^{-1}A^{it}S\|\le e^{\omega|t|}$, $t\in\mathbb R$.
On the other hand, as shown in
\cite[Theorem 3]{Simard1999}, the estimate $\|A^{it}\|\le
K\,e^{\omega|t|}$ does not imply the boundedness of
$H^\infty(S^\circ_\theta)$ calculus for $A$
for $\theta=\omega$. Our proofs do not use the apparatus of the complete boundedness.

We refer to Theorem 2.4 in \cite{CDMcY}, Theorem 2.2 in \cite{AuMcIntNahm},
Le Merdy \cite{leMe3,leMe_sim}, the review \cite{Wei_L} and
Chapter 7 of the book \cite{Haase} for additional information.

%Other equivalent conditions  follow from \cite[Theorem
%1.1]{leMe3}.

Our next result shows that this list can be widened.
% We can add more equivalent conditions.

\begin{theorem} % 2.6
\label{eqiv conds}
Let $A$ be a one-to-one operator of type $\omega\in(0,\pi)$.
Then each of the following conditions
is equivalent to conditions (a)--(d) of Theorem FC.
\begin{itemize}

\item[(g)] there exist $\tht \in (\om,  \pi)$ and $\al$ with
$\al\om <{\pi / 2}$ such that the quotient operator $\wh
W_\tht$ induces an isomorphism of the quotient space
$\Q(S_\tht^\circ,\de_\al)$ onto $H$.

\item[(h)] there exist
$\tht \in (\om,  \pi)$
and $\al$ with $\al\om <{\pi / 2}$ such that
the  operator
${\mathcal O}_\tht$ defines an isomorphism of
the space $H$ onto ${\mathcal H}(\delta_\alpha)$.

\item[(i)] for any
$\tht$, $\al$ as above,
the operator $\wh W_\tht$ induces an isomorphism of
$\Q(S_\tht^\circ,\de_\al)$ onto $H$.

\item[(j)] for any
$\tht$, $\al$ as above,
the  operator
${\mathcal O}_\tht$ defines an isomorphism of
$H$ onto ${\mathcal H}(\delta_\alpha)$.

% \item[(g)] there exists $\tht \in (\om, \frac \pi2)$,  Hilbert spaces $U$, $Y$
% and a doubly admissible \textbf{Definition???} function $\rho\in \Hnty\big(S^\circ_\tht, \Lom(Y,U)\big)$
% such that $A$ is similar to the operator $\wt M_z$ on the quotient space $E^2(S^\circ_\tht, H)/\rho E^2(S^\circ_\tht, H)$;

\end{itemize}
\end{theorem}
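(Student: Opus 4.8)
The plan is to obtain Theorem~\ref{eqiv conds} by combining Theorems~\ref{main}, \ref{Ctr-mod} and \ref{Obs-mod} with the equivalence (a)$\Leftrightarrow$(d) of Theorem~FC; no new analytic estimate is needed, only a careful tracking of ranges and norms. Throughout I read ``$\tht,\al$ as above'' so that $\tht\in(\om,\pi)$ and $\al\tht<\pi/2$, that is, in the range for which Theorems~\ref{Ctr-mod} and \ref{Obs-mod} are available; this is a nonempty class of pairs (for instance $\al=\tfrac12$ is admissible for every $\tht\in(\om,\pi)$), and it is contained in the region $\al\om<\pi/2$.

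The two facts that do all the work, for each admissible pair $(\tht,\al)$, are the following. By Definition~\ref{ctrobs} together with Theorem~\ref{main}, the range of $W_\tht$ inside $H_{-1}$ equals $\Hctr=H_A$, and the set $\{x\in H_{-1}:\cO_\tht x\in E^2(\CC\setminus S_\tht;H)\}$ equals $\Hobs=H_A$; moreover, by Theorem~\ref{Ctr-mod}(ii), $\wh W_\tht$ is a Banach-space isomorphism of $\Q(S_\tht^\circ,\de_\al)$ onto $H_A$, and by Theorem~\ref{Obs-mod}, $\cO_\tht|_{H_A}$ is a Banach-space isomorphism of $H_A$ onto $\mathcal H(\de_\al)$. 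In other words, these two maps are \emph{always} isomorphisms onto the model spaces built over $H_A$; conditions (g)--(j) assert exactly that one may replace $H_A$ by $H$ here.

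Granting this, I would argue the equivalences as follows. If (d) holds, so $H=H_A$ with equivalent norms, then the isomorphisms just recalled say precisely that $\wh W_\tht\colon\Q(S_\tht^\circ,\de_\al)\to H$ and $\cO_\tht\colon H\to\mathcal H(\de_\al)$ are isomorphisms for \emph{every} admissible pair; this is (i) and (j), and (i)$\Rightarrow$(g), (j)$\Rightarrow$(h) are trivial. Conversely, suppose (g): for some admissible pair $\wh W_\tht$ is an isomorphism of $\Q(S_\tht^\circ,\de_\al)$ onto $H$. Since the range of $\wh W_\tht$, viewed inside $H_{-1}$, is $\Hctr=H_A$, we must have $H=H_A$ as subsets of $H_{-1}$; and the two-sided bounds $\|\wh W_\tht[u]\|_H\asymp\|[u]\|_{\Q}$ from (g) and $\|\wh W_\tht[u]\|_A\asymp\|[u]\|_{\Q}$ from Theorem~\ref{Ctr-mod}(ii) give $\|x\|_H\asymp\|x\|_A$ on $H=H_A$, which is (d). Likewise, if (h) holds, then, using that $\cO_\tht$ is injective on $H_{-1}$ and that $\cO_\tht(H)=\mathcal H(\de_\al)=\cO_\tht(H_A)$ by (h) and Theorem~\ref{Obs-mod}, we get $H=H_A$ as sets, and $\|x\|_H\asymp\|\cO_\tht x\|_{\mathcal H(\de_\al)}\asymp\|x\|_A$ gives the norm equivalence, i.e.\ (d). This closes the cycles (d)$\Rightarrow$(i)$\Rightarrow$(g)$\Rightarrow$(d) and (d)$\Rightarrow$(j)$\Rightarrow$(h)$\Rightarrow$(d), so each of (g)--(j) is equivalent to (d) and hence to all of (a)--(d).

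The one point requiring care is the ``cancellation'' in (g)$\Rightarrow$(d) and (h)$\Rightarrow$(d): one genuinely needs that the set-theoretic range of $W_\tht$, respectively the set of $x\in H_{-1}$ with $\cO_\tht x\in E^2(\CC\setminus S_\tht;H)$, is \emph{exactly} $H_A$ and independent of $\tht$ --- otherwise ``isomorphism onto $H$'' would not force $H_A=H$. That identification is Theorem~\ref{main}, which is where the substance lies; once it is in hand, Theorem~\ref{eqiv conds} is a reformulation.
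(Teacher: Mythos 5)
Your proposal is correct and follows essentially the same route as the paper, whose proof of this theorem is precisely the one-line observation that (d)$\Leftrightarrow$(g)$\Leftrightarrow$(i) follows from Theorem~\ref{Ctr-mod}(ii) together with Theorem~\ref{main}, and (d)$\Leftrightarrow$(h)$\Leftrightarrow$(j) from Theorem~\ref{Obs-mod}; you merely spell out the range/norm bookkeeping that the paper leaves implicit. Your reading of ``$\tht,\al$ as above'' as the admissible range $\al\tht<\pi/2$ (rather than the literal $\al\om<\pi/2$ of the statement) is the intended one, consistent with the hypotheses of Theorems~\ref{Ctr-mod} and \ref{Obs-mod}.
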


\section{Analytic extension of inverse characteristic functions}
\label{complex}

\medskip
The class $\Psi(S^\circ_\mu)$ defined in
(\ref{Psi})
can be represented as
the union $\Psi(S^\circ_\mu)=\bigcup_{s>0}\Psi_s(S^\circ_\mu)$, where
$$
\Psi_s(S^\circ_\mu)=
\big\{
f\in {\hbox{Hol}}(S_\mu^\circ):\quad
\|f\|_{\Psi_s} := \sup_{w\in S_\mu^\circ}
\frac {1+\vert w\vert^{2s}} {|w|^s}\, |f(w)|
 < \infty\big\}.
$$
For each fixed $s>0$, $\Psi_s(S^\circ_\mu)$ is a Banach algebra
with respect to the norm $\|\cdot \|_{\Psi_s}$, and is a subalgebra
of $H^\infty(S^\circ_\mu)$.

The present section deals with analytic extensions of rational
expressions involving fractional powers of a sectorial operator.
Concretely, let us assume that $\om<\tht<\pi$ and
$0<\theta\alpha<{\pi }$. The operator
$$
\wt\de_\al(z)
:=
\al( I+2 z^\al (A^\al-z^\al)^{-1})=\al\frac {A^\al+z^\al} {A^\al-z^\al};
\qquad
z\in
\CC
\backslash \big(S_{\omega}\cup(-\infty,0]\big),
$$
is
\margp{Cambiada la formula y el siguiente texto}
the inverse to the operator
$\delta_\alpha(z)$ given in (\ref{delta-al}), whenever
both expressions are well-defined.
Note that $\wt\de_\al$ is defined only for $\al\in(0,\pi/\tht)$,
whereas $\de_\al$ is defined
only for $\al\in(0,\pi/2\tht)$,
that is, the range of possible values
of $\al$ is twice larger in the case of $\wt\de_\al$.
% than when considering $\de_\al$.
In particular, for any angle $\tht$, we always can take $\al=1$ in $\wt\de_\al$.

Since $A^\alpha$ is $\alpha\tht$--sectorial, we have that
$\wt\de_\al$ is a bounded operator-valued function on $\partial
S_\tht\setminus \{0\}$.
Moreover, if $\al<\pi /2 \tht$, then
$\wt\de_\al(z)=\de_\al^{-1}(z)$, $z\in\partial S_\tht\setminus \{0\}$.
In view of the definition of $\widetilde{ \delta_\alpha}$ we  consider the scalar functions
$$
\ga_{\alpha, z}(w):=
\alpha\;{w^\al+ z^\al \over
w^\al- z^\al},\ \ \qquad z,w \in \CC\setminus(-\infty,0],
$$
(here $\alpha>0$). One has
\beqn
\label{wt de al}
\wt\de_\al(z)=\ga_{\alpha, z}(A), \qquad
z\in\CC\setminus (S_{\mu}\cup(-\infty, 0]),
\neqn
if $\alpha\mu<\pi$.
\medskip

\begin{lemma} % 3.8
Let $\mu<\pi $ and $\alpha>0$ such that
$\alpha\mu <\pi$.
\begin{itemize}

\item[(i)] For any fixed $z\in S^\circ_\mu$,
the function $\xi_z$, given by
$$
\xi_z(w):=\ga_{\alpha, z}(w)-\ga_{1, z}(w)
$$
is analytic in $w\in S^\circ_\mu$.

\item[(ii)]
Put $\be=\min(\frac12,\al)$ and
$$\eta_z(w):=\xi_z(w)+(1-\al)\,\frac {w-1}{w+1},\ \  \hbox{ for } z, w\in S^\circ_\mu.$$
Then
the function
$z\mapsto \eta_z$ is analytic from
$S_\mu^\circ$ to the space $\Psi_\be(S^\circ_\mu)$
and satisfies
\beqn
\label{norm in Psi be}
\|\eta_z\|_{\Psi_\be(S^\circ_\mu)}
\le C \big(|z|^\al+|z|^{-\al}\big),
\quad z\in S^\circ_\mu,
\neqn
where the constant $C$ depends only on $\mu$ and $\al$.
\end{itemize}
\end{lemma}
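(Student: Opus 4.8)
The plan is to reduce everything to an elementary analysis of the scalar functions $\ga_{\alpha,z}(w)$ and then to invoke the $\Psi_s$-scale already set up in this section. For part (i), the point is that $\ga_{\alpha,z}(w)-\ga_{1,z}(w)$, although each term has a pole where $w^\al=z^\al$ (resp. $w=z$), is actually analytic in $w$ on all of $S^\circ_\mu$, because both terms have a simple pole exactly at $w=z$ (for $z\in S^\circ_\mu$ the only solution of $w^\al=z^\al$ with $w\in S^\circ_\mu$ is $w=z$, since $\alpha\mu<\pi$ keeps the map $w\mapsto w^\al$ injective on the sector), and the residues match: near $w=z$ one has $w^\al-z^\al\sim \alpha z^{\al-1}(w-z)$, so $\alpha\frac{w^\al+z^\al}{w^\al-z^\al}\sim \frac{2z}{w-z}$, and likewise $\alpha\frac{w+z}{w-z}$ with $\alpha=1$ gives $\sim\frac{2z}{w-z}$. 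Hence the difference $\xi_z$ extends holomorphically across $w=z$, and it is clearly holomorphic elsewhere on $S^\circ_\mu$; this proves (i).

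For part (ii), I would first check that $\eta_z\in\Psi_\be(S^\circ_\mu)$ for each fixed $z$, by estimating $|\eta_z(w)|$ as $w\to0$ and $w\to\infty$ inside the sector. As $w\to\infty$, $\ga_{\alpha,z}(w)\to\alpha$ and $\ga_{1,z}(w)\to1$ and $(1-\al)\frac{w-1}{w+1}\to(1-\al)$, so $\eta_z(w)\to\alpha-1+(1-\al)=0$; a quantitative version gives decay like $|w|^{-\be}$ up to a constant times $|z|^\al+1$ (the worst term being $\ga_{\alpha,z}$, whose expansion at infinity is $\alpha+2\alpha z^\al w^{-\al}+O(w^{-2\al})$, contributing $|z|^\al|w|^{-\al}$, and for $\alpha\geq\frac12$ this is $\le |z|^\al|w|^{-\be}$ on $|w|\geq1$, while the $\frac{w-1}{w+1}$ correction decays like $|w|^{-1}\le|w|^{-\be}$). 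As $w\to0$, $\ga_{\alpha,z}(w)\to-\alpha$, $\ga_{1,z}(w)\to-1$, $(1-\al)\frac{w-1}{w+1}\to-(1-\al)$, so again $\eta_z(w)\to-\alpha+1-(1-\al)=0$, with decay $|w|^{\be}$ times a constant involving $|z|^{-\al}$ (by the symmetry $w\leftrightarrow 1/w$, $z\leftrightarrow 1/z$ in the structure of $\ga_{\alpha,z}$). Combining the two regimes and the boundedness on $1\le|w|\le$ fixed and $|w|^{-1}\le$ fixed (where one must also keep track of $|w-z|$ staying away from $0$ after the removable singularity has been cancelled — the constant depends on $\mu$ only through how close $z$ and $w$ can be while both in $S^\circ_\mu$, but $\xi_z$ is entire across $w=z$ so there is no actual blow-up) yields the bound $\|\eta_z\|_{\Psi_\be(S^\circ_\mu)}\le C(|z|^\al+|z|^{-\al})$ with $C=C(\mu,\al)$.

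Finally, the analyticity of $z\mapsto\eta_z$ as a map $S^\circ_\mu\to\Psi_\be(S^\circ_\mu)$ I would obtain by the standard principle that a locally bounded, weakly (or pointwise, via evaluation functionals $f\mapsto f(w_0)$, which are continuous on $\Psi_\be$) analytic family in a Banach space is analytic. Pointwise analyticity in $z$ is clear since $\ga_{\alpha,z}(w)$ and $\frac{w-1}{w+1}$ depend analytically on $z$ for fixed $w$ away from the (now removable) coincidence locus, and local boundedness in the $\Psi_\be$-norm is exactly the estimate \eqref{norm in Psi be} just established. Alternatively one can exhibit $\eta_z$ by an explicit integral (Cauchy) formula in $z$ and differentiate under the integral.

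\textbf{Main obstacle.} The delicate point is not the analyticity but the uniformity of the constant in \eqref{norm in Psi be}: one has to verify that, after the removable singularity at $w=z$ is resolved, the sup defining $\|\eta_z\|_{\Psi_\be}$ is controlled by $|z|^\al+|z|^{-\al}$ \emph{uniformly} as $z$ ranges over the whole open sector and $w$ over the whole open sector — including the regime where $w$ and $z$ are comparable in modulus but with different arguments, where $|w^\al-z^\al|$ can be small. Here one uses that $\alpha\mu<\pi$ forces $|w^\al-z^\al|\gtrsim |w|^{\al-1}|w-z|$ uniformly on the sector (the map $w\mapsto w^\al$ is bi-Lipschitz on $S^\circ_\mu$ up to the weight $|w|^{\al-1}$), so the apparent singularity of each separate term is genuinely cancelled in $\xi_z$ with a constant depending only on $\mu,\al$, and then the two-sided power bound follows by homogeneity considerations (scaling $w\mapsto\lambda w$, $z\mapsto\lambda z$).
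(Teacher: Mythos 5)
Part (i) of your argument is fine and is essentially the paper's: the only pole of $\ga_{\al,z}(\cdot)$ in the sector is $w=z$ (since $\al\mu<\pi$ rules out argument wrap-around), it is simple, and its residue $2z$ is independent of $\al$, so the difference $\xi_z$ extends holomorphically. The analyticity of $z\mapsto\eta_z$ into $\Psi_\be(S^\circ_\mu)$ via local boundedness plus weak analyticity against the (separating) evaluation functionals is also an acceptable alternative to the paper's route (which proves norm continuity of $z\mapsto\eta_z$ explicitly and then applies a vector-valued Morera theorem) — but both routes presuppose the norm bound \eqref{norm in Psi be}, and that is where your proposal has a genuine gap.

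The uniform estimate in the regime you yourself flag as the main obstacle — $w,z\in S^\circ_\mu$ of comparable modulus, in particular $w$ near $z$ (possibly both near the boundary rays) — is asserted but never actually established. Your two quantitative regimes ($|w|\to\infty$ via the expansion $\al+2\al z^\al w^{-\al}+O(w^{-2\al})$, and $|w|\to0$ by symmetry) are only valid when $|z^\al/w^\al|$ (resp.\ its reciprocal) is small, so they do not cover $|w|\asymp|z|$; and in that regime the proposed fix does not suffice: the lower bound $|w^\al-z^\al|\gtrsim|w|^{\al-1}|w-z|$ together with ``the residues match'' gives only qualitative boundedness of $\xi_z$ for each fixed $z$ (removability of the singularity), not a constant $C(\mu,\al)$ uniform in $z$ — to extract that you would need a genuine second-order cancellation estimate for the numerator $\al(w^\al+z^\al)(w-z)-2z(w^\al-z^\al)$ (or a compactness/homogeneity argument written out in full), none of which appears in the sketch. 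The paper sidesteps this entirely by a device your proposal is missing: it fixes $\nu\in(\mu,\pi)$ with $\al\nu<\pi$ and estimates $\ga_{\al,z}(w)-\al\frac{w-1}{w+1}$ only for $w\in\partial S_\nu$ and $z\in S^\circ_\mu$, where the angular gap gives the uniform lower bounds $|w^\al-z^\al|\ge C_1|z|^\al$ and $|w^\al-z^\al|\ge C_1|w|^\al$, so no near-diagonal analysis is ever needed; since by (i) the function $\eta_z$ is holomorphic in the larger sector $S^\circ_\nu$, the Phragm\'en--Lindel\"of theorem (applied to $(w^\be+w^{-\be})\eta_z(w)$) transports the boundary bound into $S^\circ_\nu\supset S^\circ_\mu$, yielding \eqref{norm in Psi be} with $C=C(\mu,\al)$. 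Either adopt this larger-sector/Phragm\'en--Lindel\"of argument or carry out the near-diagonal estimate in detail; as it stands, the crucial inequality is not proved.
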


\

\noindent{\it Proof.} (i)
It is straightforward to check that
$$
Res(\ga_{\alpha, z}, z)= 2 z
$$
for any $\alpha$ and  any $z\in S_\mu^\circ$. Since $w=z$ is the
only pole of $\ga_{\alpha, z}(w)$, which is of order one,
assertion (i) follows.

\noindent{(ii)} Take any $\nu$ such that $\mu<\nu<\pi$
and $\alpha \nu<\pi$.
\margp{si no se cumple $\alpha \nu<\pi$, puede suceder que
$\alpha \mu=\alpha \nu$
}
Notice that
$$
|w^\al-z^\al|\ge C_1|z|^\al,
\quad
|w^\al-z^\al|\ge C_1|w|^\al,
$$
for all
$z,w$ such that $z\in S^\circ_\mu$ and
$w\in\partial S_\nu$,
where $C_1>0$ depends only on $\mu$ and $\nu$.
We keep the same notation $C_1$ although it may be different in each
inequality. For $z\in S^\circ_\mu$, $w\in\partial S^\circ_\nu$ with
$|w|\le 1$, it follows that
\begin{eqnarray*}
\big|\ga_{\al,z}(w)-\al\,\frac{w-1}{w+1}\big|
&=&
\al\,\Big|
\frac {2w^\al}{w^\al-z^\al} -\frac {2w}{w+1}
\Big|
\le C_1 (\left|w\right|+\left|{w \over z}\right|^{\al}) \cr\cr&\le& C_1 (1+|z|^{-\al})|w|^\be .
\end{eqnarray*}
Similarly, for
\margp{he desplazado esto aqui}
$z\in S^\circ_\mu$ and $w\in\partial S_\nu$
such that $|w|\ge 1$,
\begin{multline*}
\big|\ga_{\al,z}(w)-\al\,\frac{w-1}{w+1}\big|
=
\al\,\Big|
\frac {2z^\al}{w^\al-z^\al} +\frac 2{w+1}
\Big|
\\
\le C_1 \big( \frac {|z|^\al} {|w|^\al} +\frac 1 {|w|}\big)
\le 2C_1 \frac {|z|^\al+|z|^{-\al} }{|w|^\be}.
\end{multline*}
These two inequalities give
\beqn
\label{ineq al}
\big|\ga_{\al,z}(w)-\al\,\frac{w-1}{w+1}\big|
\le C_1\frac{|w|^\be}{1+|w|^{2\be} }
\;\big( |z|^\al+|z|^{-\al} \big)
\neqn
for $z\in S^\circ_\mu$ and $w\in\partial S_\nu$ where we remind that $C_1$ depends only on
$\mu, \nu$ and $\alpha$.

Note that
$$
C_1( |w|^\be+|w|^{-\be})\le|w^\be+w^{-\be}|\le |w|^\be+|w|^{-\be}, \qquad w\in S_\nu.
$$

Now by applying (\ref{ineq al}) twice (to general $\al$ and to $\al=1$) we find
that there exists $C_2>0$ such that
$$
|(w^\be+w^{-\be})\eta_z(w)| \le C_2
% \frac{|w|^\be}{1+|w|^{2\be} }
\big( |z|^\al+|z|^{-\al} \big)
$$
for $z\in S^\circ_\mu$ and  $w\in\partial S_\nu$.
By (i), for each fixed $z$ the function
$\eta_z$ is analytic on $S^\circ_\mu$. By the Phragm\'en-Lindel\"of theorem, it
follows that the latter estimate in fact holds for all $z\in S^\circ_\mu$ and $w\in S^\circ_\nu$.
We conclude that for all $z\in S^\circ_\mu$,
$\eta_z$
belongs to
\margp{belongs to}
$\Psi_\be(S^\circ_\nu)$; in particular,
$\eta_z\in \Psi_\be(S^\circ_\mu)$
for $z\in S^\circ_\mu$ and
(\ref{norm in Psi be}) holds true.

Finally, fix $\lambda,z \in S^\circ_\mu$. The function
$$
F(w):=w^{-\beta}(1+w^{2\beta})(\eta_\lambda(w)-\eta_z(w))
$$
is holomorphic in $w\in S_\mu^\circ$ and continuous up to the boundary
$\partial S_\mu^\circ$ (notice that $\lim_{w\in S_\mu^\circ, w\to 0}F(w)=0$),
so that
$$
\Vert \eta_\lambda -\eta_{z}\Vert_{\Psi_\beta(S^\circ_\mu)}
\asymp \sup_{w\in S^\circ_\mu}{\vert1+ w^{2\beta}\vert\over
\vert w\vert^\beta}\vert(\eta_\lambda-\eta_z)(w)\vert
=\sup_{w\in\partial S^\circ_\mu}\vert F(w)\vert
$$
by the Phragm\'en-Lindel\"of theorem.

Writing the function $\eta_\lambda-\eta_z$ as
$$
(\eta_\lambda-\eta_z)(w)=\left[{2\alpha w^\alpha(\lambda^\alpha-z^\alpha)\over(w^\alpha-\lambda^\alpha)
(w^\alpha-z^\alpha)}-{2w(\lambda-z)\over(w-\lambda) (w-z)}\right]
$$
we obtain
\begin{multline*}
\Vert\eta_\lambda-\eta_z\Vert_{\Psi_\beta(S^\circ_\mu)}
\le
\vert\lambda^\al-z^\al\vert\cdot
\sup_{w\in\partial S^\circ_\mu}
\Big|{2\al(1+w^{2\beta})w^\al\over w^\beta(w^\al-\lambda^\al)(w^\al-z^\al)}\Big|
\\
+
\vert\lambda-z\vert
\cdot
\sup_{w\in\partial S^\circ_\mu}
\Big| {2(1+w^{2\beta})w\over w^\beta(w^\al-\lambda^\al)(w^\al-z^\al)}\Big|.
\end{multline*}
From this, and using that $\beta<\hbox{min}\{1/2,\al\}$, it is
readily seen that $\lim_{\lambda\to
z}\Vert\eta_\lambda-\eta_z\Vert_{\Psi_\beta(S^\circ_\mu)}=0$. Thus
the function $\eta\colon z\mapsto\eta_z,\
S^\circ_\mu\rightarrow\Psi_\beta(S^\circ_\mu) $ is continuous.
Then a vector-valued version of the Morera theorem applies to
obtain that $\eta$ is analytic. We have done.
\qed
\begin{proposition}
\label{diff wt de}
For
every  $\al$ such that  $0<\al<\pi/\tht$, the operator-valued function
$z\mapsto\wt\de_\al(z)-\wt\de_1(z)$, defined on
\margp{operator-valued; $\partial S^\circ_\tht\setminus \{0\}$}
$\partial S^\circ_\tht\setminus \{0\}$, continues to a
function on the sector $S^\circ_\tht$ of the class
$\Hnty\big(S^\circ_\tht;  L(H)\big)$.
\end{proposition}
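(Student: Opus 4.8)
The plan is to use the preceding Lemma to exhibit the difference $\wt\de_\al-\wt\de_1$, on the interior of a sector slightly larger than $S^\circ_\tht$, as the image of an analytic $\Psi_\be$-valued function under the Dunford--Riesz calculus of $A$, and then to pass from the polynomial estimate of the Lemma to a uniform bound by Phragm\'en--Lindel\"of. Fix $\mu$ with $\tht<\mu<\min(\pi,\pi/\al)$ (possible because $\al\tht<\pi$), and note that $\al<\pi/\tht<\pi/\om$ gives $\al\om<\pi$, so $A^\al$ is $\al\om$-sectorial \cite{Haase}. By (\ref{wt de al}) (applied with the sector $S_\om$, both with the given $\al$ and with the exponent $1$) we have $\wt\de_\al(z)=\ga_{\al,z}(A)$ and $\wt\de_1(z)=\ga_{1,z}(A)$ for $z\in\CC\setminus(S_\om\cup(-\infty,0])$, hence $\wt\de_\al(z)-\wt\de_1(z)=\xi_z(A)$ there, with $\xi_z$ the function from the Lemma.

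First I would observe that $\psi\mapsto\psi(A)$ is a bounded linear map from $\Psi_\be(S^\circ_\mu)$ into $L(H)$: choosing in (\ref{DuRi}) the contour $\gamma=\partial S_{\tht''}$ with $\om<\tht'<\tht''<\mu$, the resolvent bound (\ref{sect_cond}) gives
$$\|\psi(A)\|\le\frac1{2\pi}\int_\gamma\frac{C_{\tht'}}{|\lambda|}\,|\psi(\lambda)|\,|d\lambda|\le\frac{C_{\tht'}}{\pi}\,\|\psi\|_{\Psi_\be}\int_0^\infty\frac{t^{\be-1}}{1+t^{2\be}}\,dt,$$
a finite multiple of $\|\psi\|_{\Psi_\be}$ since $\be=\min(\tfrac12,\al)>0$. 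Composing the analytic map $z\mapsto\eta_z\colon S^\circ_\mu\to\Psi_\be(S^\circ_\mu)$ of the Lemma with this bounded operator, and using that $\varphi_0(A):=(A-1)(A+1)^{-1}=I-2(A+1)^{-1}$ is a fixed bounded operator (as $-1\in\rho(A)$), we see that
$$G(z):=\eta_z(A)-(1-\al)\,\varphi_0(A)$$
is analytic from $S^\circ_\mu$ into $L(H)$; by the identity $\xi_z=\eta_z-(1-\al)\varphi_0$ and estimate (\ref{norm in Psi be}) it satisfies $G(z)=\xi_z(A)$ and $\|G(z)\|\le C(|z|^\al+|z|^{-\al})$ on $S^\circ_\mu$. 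On $S^\circ_\mu\setminus S_\om$, in particular on $\partial S_\tht\setminus\{0\}$, $G$ agrees with $\wt\de_\al-\wt\de_1$; thus $G|_{S^\circ_\tht}$ is the desired analytic continuation, and it remains to prove $\sup_{z\in S^\circ_\tht}\|G(z)\|<\infty$.

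For this I would first bound $G$ on the rays $\partial S_\mu$: if $z\in\partial S_\mu\setminus\{0\}$ then $|\arg z|=\mu>\om$, so (\ref{sect_cond}) gives $\|\wt\de_1(z)\|=\|I+2z(A-z)^{-1}\|\le1+2C_{\tht'}$, while $|\arg z^\al|=\al\mu>\al\om$ together with the sectoriality of $A^\al$ gives $\|\wt\de_\al(z)\|=\al\|I+2z^\al(A^\al-z^\al)^{-1}\|\le\al(1+2C')$; hence $\|G(z)\|$ is bounded on $\partial S_\mu\setminus\{0\}$. Now transport $S^\circ_\mu$ onto the strip $\{|\Im\zeta|<\mu\}$ by $z=e^\zeta$: the function $\zeta\mapsto G(e^\zeta)$ is analytic on the strip, bounded on its two boundary lines, and of growth at most a constant times $e^{\al|\Re\zeta|}$ --- far below the double-exponential rate permitted in the Phragm\'en--Lindel\"of principle for a strip of finite width. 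Hence $\zeta\mapsto G(e^\zeta)$, and so $G$, is bounded on $S^\circ_\mu$, a fortiori on $S^\circ_\tht$, and therefore $\wt\de_\al-\wt\de_1\in H^\infty(S^\circ_\tht;L(H))$.

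I expect the last step to be the main obstacle. The Lemma only provides the polynomial bound $\|G(z)\|\le C(|z|^\al+|z|^{-\al})$, which by itself diverges at $0$ and at $\infty$ --- passing through the $\Psi_\be$-norm is genuinely lossy (for self-adjoint $A$, homogeneity of $\xi_z$ shows $\|\xi_z(A)\|=\sup_{\lambda\in\sigma(A)}|\xi_z(\lambda)|$ is in fact bounded in $z$). Converting it into a uniform estimate forces one to combine it with the elementary boundedness of $\wt\de_\al$ and $\wt\de_1$ on $\partial S_\mu$ and to invoke Phragm\'en--Lindel\"of in a form controlling the behaviour near $0$ and near $\infty$ simultaneously; the logarithmic change of variables is the convenient device for handling both ends at once.
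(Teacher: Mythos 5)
Your argument is correct and follows essentially the same route as the paper: the preceding Lemma plus the boundedness of the $\Psi_\be$-calculus give the analytic continuation $z\mapsto\xi_z(A)$ with the polynomial bound $C(|z|^\al+|z|^{-\al})$, and then the boundedness of $\wt\de_\al$ and $\wt\de_1$ on the boundary rays combined with Phragm\'en--Lindel\"of yields the uniform bound. The only differences are expository: you work on a slightly larger sector $S^\circ_\mu$ and make the logarithmic change of variables explicit, whereas the paper applies Phragm\'en--Lindel\"of directly on $S^\circ_\tht$ to the scalar functions $z\mapsto\langle\xi_z(A)h_1,h_2\rangle$.
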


\noindent{\it Proof.} Choose $\be=\min({1\over 2}, \alpha)$.
The map $f\mapsto f(A)$, which
\margp{\quad **}
goes from
$ \Psi_\be(S_\tht^\circ)$ to
$ L(H)$, is
linear and bounded. Hence by (\ref{norm in Psi be}),
$$
\|\eta_z(A)\|\le C
\big(|z|^\al+|z|^{-\al}\big),
\qquad z\in S^\circ_\tht.
$$
Therefore a similar estimate holds for $\xi_z(A)$:
$$
\|\xi_z(A)\|\le C'
\big(|z|^\al+|z|^{-\al}\big),
\qquad z\in S^\circ_\tht.
$$
In particular, the map $z\mapsto \xi_z(A)$ is an analytic
continuation of the map $z\mapsto\wt\de_\al(z)-\wt\de_1(z)$ to the sector $S^\circ_\tht$.
As we noted at the beginning of this section, the operator-valued functions
$\wt\de_\al$ and $\wt\de_1$ are bounded on
$\partial S_\tht\backslash\{0\}$. Now the assertion of the proposition is obtained
by applying Phragm\'en-Lindel\"of theorem to
the scalar functions $z
\mapsto \langle \xi_z(A) h_1, h_2\rangle$,
where $h_1,h_2\in H$.
\qed

\bigskip

\section{Hankel--like operators for sectorial operator}
\label{like}

\medskip
In the following result we introduce a Hankel-like operator on the Hardy--Smirnov class
which is associated to the inverse characteristic function $\wt\delta_\alpha$, and on
which our arguments are based. Thus in principle such an operator depends
on the parameter $\al\in(0,\pi/\tht)$. We shall see as an application
of Proposition \ref{diff wt de} that indeed it is independent of $\al$.

\medskip
\begin{lemma}\label{00}
Let $\tht\in(\omega,\pi)$.

\noindent (1) Define a Hermitian bilinear
\margp{$g(\bar\lambda)$}
pairing by putting
\beqn
\label{corch}
\langle f, g\rangle:={1\over 2\pi i}\int_{\partial S_\theta}\langle
f(\lambda), g(\bar \lambda)\rangle_H \,d\la,
\neqn
for $f\in E^2(S_\theta^\circ; H)$ and $g\in E^2(\CC\backslash S_\theta; H)$.
The spaces  $E^2(S_\theta^\circ; H)$ and $ E^2(\CC\backslash S_\theta; H)$
are dual with respect to this pairing.

\noindent (2) The space $L^2(\partial S_\theta; H)$ splits into the direct sum
$$
L^2(\partial S_\theta; H)= E^2(S_\theta^\circ; H)\oplus
E^2(\CC\backslash S_\theta; H).
$$
This defines parallel continuous projections
$$
P_{int}:  L^2(\partial S_\theta; H)\to  E^2(S_\theta^\circ; H), \qquad
P_{out}:  L^2(\partial S_\theta; H)\to  E^2(\CC\backslash S_\theta;
H),
$$
given by the Cauchy integrals
\begin{eqnarray*}
P_{int}f(z):&=&{1\over 2\pi i}\int_{\partial S_\theta}{f(\xi)\over \xi-z}d\xi,
\qquad z\in S_\theta^\circ,\cr
P_{out}f(z):&=&-{1\over 2\pi i}\int_{\partial S_\theta}{f(\xi)\over \xi-z}d\xi,
\qquad z\in \CC\backslash S_\theta.
\end{eqnarray*}

\noindent (3) Let ${\mathcal H}(\delta_\alpha, S^\circ_\theta)$ be the space defined prior to
Theorem \ref{Obs-mod}. Then ${\mathcal H}(\delta_\alpha, S^\circ_\theta)
=\left(\de_\al E^2(S_\tht^\circ;H)\right)^{\perp}$ for every $\al\in(0, \pi/ 2 \tht)$, where the annihilator is calculated with
respect to the pairing \eqref{corch}.

\noindent (4) For $\al\in(0, \pi/  \tht)$ let us consider the Hankel--like operator
$$
{\mathcal J}_{\wt\de_\al}: E^2(S_\theta^\circ; H)\to  E^2(\CC\backslash
S_\theta; H),
$$
acting by
$$
{\mathcal J}_{\wt\de_\al}(u):=P_{out}(\wt\delta_\al u\vert_ {\partial S_\theta}).
$$
Then Im$\,{\mathcal
J}_{\wt\de_\al}={\mathcal H}(\de_\al)$ and Ker$\,{\mathcal
J}_{\wt\de_\al}$ $=\de_\al E^2(S_\theta^\circ; H)$ for  $\al\in(0, \pi/ 2 \tht)$. Therefore by
factoring ${\mathcal J}_{\wt\de_\al}$ by its kernel we obtain an
isomorphism
$$
\widehat{{\mathcal J}_{\wt\de_\al}}:{\mathcal Q}(S_\theta^\circ, \de_\al)\to
{\mathcal H}(\delta_\alpha, S^\circ_\theta),\ \  \hbox{ whenever }  \al\in(0, \pi/ 2 \tht).
$$
\end{lemma}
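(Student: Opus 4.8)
The plan is to reduce parts (1)--(2) to classical Hardy--Smirnov theory on a half-plane, to obtain (3) from (1) by a short duality computation, and to deduce (4) from (2)--(3) together with the open mapping theorem.

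\emph{Parts (1) and (2).} Both $S^\circ_\tht$ and $\CC\setminus S_\tht$ are Jordan domains on the Riemann sphere sharing the rectifiable boundary $\partial S_\tht$, which runs through $0$ and $\infty$. Mapping each of them conformally onto a half-plane by a power function, and using that such a map carries the Smirnov class $E^2$ onto the Hardy class $H^2$ of the half-plane, one reduces both statements to the corresponding facts for a half-plane, for which we refer to \cite{Duren} and \cite{SzNF}. Concretely, the Cauchy singular integral along $\partial S_\tht$ is bounded on $L^2(\partial S_\tht;H)$ (a trivial instance, as $\partial S_\tht$ is a union of two rays, or by conformal transplant to the Hilbert transform on $\RR$); the Plemelj--Sokhotski formulas then show that $P_{int}+P_{out}=I$, that $P_{int},P_{out}$ are complementary idempotents, and that their ranges are exactly $E^2(S^\circ_\tht;H)$ and $E^2(\CC\setminus S_\tht;H)$, embedded in $L^2(\partial S_\tht;H)$ via non-tangential boundary values; this is (2). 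For (1), the same transplant identifies \eqref{corch} with boundary integration between the Hardy classes of the two half-planes, which is a bounded, non-degenerate pairing realizing $E^2(\CC\setminus S_\tht;H)$ as the (anti)dual of $E^2(S^\circ_\tht;H)$. We record an auxiliary fact used below: $\langle f,g\rangle=0$ whenever \emph{both} $f$ and $g$ belong to $E^2(S^\circ_\tht;H)$; indeed $\lambda\mapsto\langle f(\lambda),g(\bar\lambda)\rangle_H$ is holomorphic on $S^\circ_\tht$, lies in $E^1(S^\circ_\tht)$ and decays at $0$ and $\infty$, so its integral over $\partial S_\tht$ vanishes by Cauchy's theorem.

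\emph{Part (3).} Fix $\al\in(0,\pi/2\tht)$, so that $\de_\al\in\Hnty\big(S^\circ_\tht;L(H)\big)$ and $\de_\al$ is bounded on $\partial S_\tht\setminus\{0\}$. Let $v\in E^2(\CC\setminus S_\tht;H)$. By (1), $v$ annihilates $\de_\al E^2(S^\circ_\tht;H)$ if and only if $\langle\de_\al u,v\rangle=0$ for every $u\in E^2(S^\circ_\tht;H)$. Moving $\de_\al$ across the pairing --- here one uses the conjugation built into \eqref{corch} --- one gets $\langle\de_\al u,v\rangle=\langle u,h\rangle$, where $h:=(\de_\al v)\vert_{\partial S_\tht}$, an element of $L^2(\partial S_\tht;H)$ since $\de_\al$ is bounded on the boundary. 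Splitting $h=h_{int}+h_{out}$ by (2), the ``same-side'' vanishing recorded above yields $\langle u,h_{int}\rangle=0$, so $\langle\de_\al u,v\rangle=\langle u,h_{out}\rangle$; by the non-degeneracy in (1) this vanishes for all $u$ precisely when $h_{out}=0$, that is, when $(\de_\al v)\vert_{\partial S_\tht}$ is the boundary value of a function in $E^2(S^\circ_\tht;H)$ --- which is exactly the condition defining ${\mathcal H}(\de_\al,S^\circ_\tht)$. Hence ${\mathcal H}(\de_\al,S^\circ_\tht)=\big(\de_\al E^2(S^\circ_\tht;H)\big)^{\perp}$.

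\emph{Part (4).} The operator ${\mathcal J}_{\wt\de_\al}$ is the composition of the boundary-value map, multiplication by $\wt\de_\al$, and $P_{out}$, hence is bounded ($P_{out}$ by (2); multiplication by $\wt\de_\al$ because $\wt\de_\al$ is bounded on $\partial S_\tht\setminus\{0\}$, see Section~\ref{complex}). For $\al\in(0,\pi/2\tht)$ we have $\wt\de_\al=\de_\al^{-1}$ on $\partial S_\tht\setminus\{0\}$. Then ${\mathcal J}_{\wt\de_\al}u=0$ iff $\wt\de_\al u\vert_{\partial S_\tht}\in E^2(S^\circ_\tht;H)$ (by (2)), i.e.\ iff $u\vert_{\partial S_\tht}=(\de_\al w)\vert_{\partial S_\tht}$ for some $w\in E^2(S^\circ_\tht;H)$, i.e.\ iff $u\in\de_\al E^2(S^\circ_\tht;H)$; thus $\Ker{\mathcal J}_{\wt\de_\al}=\de_\al E^2(S^\circ_\tht;H)$. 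For the range, write $\wt\de_\al u\vert_{\partial S_\tht}=w+v$ with $w\in E^2(S^\circ_\tht;H)$ and $v={\mathcal J}_{\wt\de_\al}u$; then $(\de_\al v)\vert_{\partial S_\tht}=u\vert_{\partial S_\tht}-(\de_\al w)\vert_{\partial S_\tht}\in E^2(S^\circ_\tht;H)$, so $v\in{\mathcal H}(\de_\al)$, while conversely every $v\in{\mathcal H}(\de_\al)$ equals ${\mathcal J}_{\wt\de_\al}\big((\de_\al v)\vert_{\partial S_\tht}\big)$; hence $\operatorname{Im}{\mathcal J}_{\wt\de_\al}={\mathcal H}(\de_\al)$. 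Finally, by (3) the space ${\mathcal H}(\de_\al)$ is an annihilator, hence closed in $E^2(\CC\setminus S_\tht;H)$, while $\de_\al E^2(S^\circ_\tht;H)$ is closed; so the induced operator $\widehat{{\mathcal J}_{\wt\de_\al}}$ is a continuous linear bijection of the Banach space ${\mathcal Q}(S^\circ_\tht,\de_\al)$ onto the Banach space ${\mathcal H}(\de_\al,S^\circ_\tht)$, and therefore an isomorphism by the open mapping theorem.

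The main difficulty is the bookkeeping at the endpoints $0$ and $\infty$ of the unbounded curve $\partial S_\tht$: one has to verify that the conformal transplant to the half-plane is legitimate, that the Riesz projections genuinely split $L^2(\partial S_\tht;H)$ (in particular $E^2(S^\circ_\tht;H)\cap E^2(\CC\setminus S_\tht;H)=\{0\}$, which follows from the $L^2$-decay at $0$ and $\infty$), that Cauchy's theorem applies to the $E^1$ functions entering the ``same-side'' vanishing, and that the conjugation in \eqref{corch} is handled correctly when $\de_\al$ is moved across the pairing in (3). One must also keep careful track of the two admissible ranges of $\al$: $\wt\de_\al$ is defined for $\al\in(0,\pi/\tht)$, whereas the identity $\wt\de_\al=\de_\al^{-1}$ on $\partial S_\tht$, on which (3) and the kernel/range computation in (4) rely, holds only for $\al\in(0,\pi/2\tht)$.
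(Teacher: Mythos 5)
Your parts (1), (2) and (4) are essentially correct and in fact far more detailed than the paper, whose proof of this lemma consists of citing \cite[Propositions 2.1 and 2.2]{Yak} for (1)--(3) and declaring (4) straightforward; your kernel/range computation in (4) is exactly the intended ``straightforward'' argument. One remark there: the closedness of ${\mathcal H}(\de_\al)$ needed for the open mapping theorem does not require (3) at all, since ${\mathcal H}(\de_\al)$ is the kernel of the bounded operator $v\mapsto P_{out}\big(\de_\al v|_{\partial S_\tht}\big)$ on $E^2(\CC\setminus S_\tht;H)$; it is worth using this route, because it makes (4) independent of (3).

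The genuine gap is in your proof of (3), precisely at the step you flag: ``moving $\de_\al$ across the pairing''. In \eqref{corch} the factor $\de_\al(\la)$ sits inside the $H$-inner product, so transferring it to the second slot produces the Hilbert-space adjoint $\de_\al(\la)^*=\tfrac1\al\big((A^*)^\al-\bar\la^\al\big)\big((A^*)^\al+\bar\la^\al\big)^{-1}=\de_{\al,A^*}(\bar\la)$. The conjugation $\la\mapsto\bar\la$ built into the pairing accounts for $\overline{\la^{\al}}=\bar\la^{\al}$, but it cannot turn $A^*$ back into $A$; the identity $\langle\de_\al u,v\rangle=\langle u,(\de_\al v)|_{\partial S_\tht}\rangle$ would require $\de_\al(\la)^*=\de_\al(\bar\la)$, i.e.\ $A=A^*$. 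What your computation actually yields is $\big(\de_\al E^2(S^\circ_\tht;H)\big)^{\perp}=\{v\in E^2(\CC\setminus S_\tht;H):\ \de_{\al,A^*}v|_{\partial S_\tht}\in E^2(S^\circ_\tht;H)\}$, the model space attached to the characteristic function of $A^*$ (recall $\de_{\al,A^*}(z)=\big(\de_{\al,A}(\bar z)\big)^*$, as in Section~\ref{Sec_final}), and this differs from ${\mathcal H}(\de_\al)$ in general. A scalar test shows the step cannot be repaired by bookkeeping: take $H=\CC$, $A=a=e^{i\varphi}$ with $0<\varphi\le\om$, $v(z)=\sqrt a\,(z-a)^{-1}\in{\mathcal H}(\de_\al)$, $u(z)=(z+1)^{-1}\in E^2(S^\circ_\tht)$; closing the contour at the pole $\la=\bar a\in S^\circ_\tht$ gives $\langle\de_\al u,v\rangle=\pm\,\overline{\sqrt a}\,\de_\al(\bar a)\,u(\bar a)\ne0$, since $\de_\al(\bar a)=\tfrac1\al(a^\al-\bar a^\al)(a^\al+\bar a^\al)^{-1}\ne0$ for non-real $a$. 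So your argument does not establish (3) in the stated form: the duality behind (3) genuinely involves $\de_{\al,A^*}$ (this is how it appears in the cited source), and a statement purely in terms of $\de_\al$ requires either a bilinear pairing built from a conjugation of $H$ or the adjoint characteristic function on one side. Since, as noted above, your (4) can be made independent of (3), the rest of your argument survives this correction.
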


\begin{proof}Statements (1)--(3) are contained
in \cite[Propositions 2.1 and 2.2]{Yak}, and statement (4) is
straightforward.
\end{proof}

\medskip

%\begin{remark}\label{delta-indep} It turns out that ${\mathcal H}(\delta_\alpha, S^\circ_\theta)$ is independent of $\theta $ for $\theta\in (\omega, \pi)$. To show this, take $\theta_1, \theta_2$ such that $\omega<\theta_1<\theta_2<\pi$. We consider $v
%\in {\mathcal H}(\delta_\alpha, S^\circ_{\theta_2})$, i.e.,  $v\in E^2(\CC\backslash S_{\theta_2};H)$ and $\delta_\alpha v\vert_{ \partial S_{\theta_2}}\in
%E^2({S^\circ_{\theta_2}}; H)$. Then $\delta_\alpha v\vert_{ \partial S_{\theta_1}}\in
%E^2({S^\circ_{\theta_1}}; H)$ and we apply the Hankel-like operator ${\mathcal J}_{\wt\de_\al}$ to obtain that ${\mathcal J}_{\wt\de_\al}(\delta_\alpha v)\in  E^2(\CC\backslash S_{\theta_1};H)$ where
%$$
%{\mathcal J}_{\wt\de_\al}(\delta_\alpha v)=P_{out}\left(\tilde{\delta}_\alpha(\delta_\alpha v)\vert_{ \partial S_{\theta_1}}\right).
%$$
%Since $\tilde{\delta}_\alpha={\delta_\alpha}^{-1}$  then ${\mathcal J}_{\wt\de_\al}(\delta_\alpha v)\vert_{\CC\backslash S_{\theta_2}}=v$. We conclude that  $v\in E^2(\CC\backslash S_{\theta_1};H). $ Similarly we prove that
%${\mathcal H}(\delta_\alpha, S^\circ_{\theta_1})\subset
%{\mathcal H}(\delta_\alpha, S^\circ_{\theta_2})$.

% In accordance with the above fact we shall write ${\mathcal H}(\delta_\alpha)$ instead of
% ${\mathcal H}(\delta_\alpha, S^\circ_\theta)$ in the remainder of the paper.\qed
% \end{remark}

\medskip
Now we show that ${\mathcal J}_{\wt\de_\al}$ is independent of $\al$.

\begin{proposition}\label{Hank}
(1) The Hankel-like operator
$$
{\mathcal J}_{\wt\de_\al}: E^2(S^\circ_\theta; H)\to  E^2(\CC\backslash
S_\theta; H),
$$
does not depend on $\al$ for $\al\in(0,\pi/\tht)$.

(2) The space $\de_\al E^2(S_\theta^\circ; H)$, $0<\al<\pi/(2\tht)$,
does not depend on $\al$.

\end{proposition}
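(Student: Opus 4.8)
The plan is to deduce both assertions from Proposition \ref{diff wt de}, which tells us that for $0<\al<\pi/\tht$ the difference $\wt\de_\al-\wt\de_1$, \emph{a priori} only defined on $\partial S_\tht\setminus\{0\}$, extends analytically to an $H^\infty(S^\circ_\tht; L(H))$-valued function on the whole open sector. The point is that this boundedness inside the sector forces the corresponding multiplication operator to respect the Hardy--Smirnov splitting in Lemma \ref{00}(2).

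\medskip
\noindent\textbf{Proof of (1).} Fix $\al\in(0,\pi/\tht)$ and write $g_\al:=\wt\de_\al-\wt\de_1$, so by Proposition \ref{diff wt de} we have $g_\al\in H^\infty(S^\circ_\tht; L(H))$ (and $g_\al$ is bounded on $\partial S_\tht\setminus\{0\}$). For $u\in E^2(S^\circ_\tht;H)$, the product $g_\al u$ again lies in $E^2(S^\circ_\tht;H)$: indeed $g_\al u\in\mbox{Hol}(S^\circ_\tht;H)$ and its boundary values satisfy $\|g_\al u\|_{L^2(\partial S_\tht;H)}\le\|g_\al\|_{\infty,\tht}\|u\|_{E^2(S^\circ_\tht;H)}$, so $g_\al u\in L^2(\partial S_\tht;H)$ is the non-tangential limit of a function holomorphic and suitably bounded on the subdomains $\Omega_n$, hence belongs to the Hardy--Smirnov class. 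Now by Lemma \ref{00}(2), $E^2(S^\circ_\tht;H)=\Ker P_{out}$, so $P_{out}(g_\al u\vert_{\partial S_\tht})=0$. Since $\wt\de_\al\vert_{\partial S_\tht}=\wt\de_1\vert_{\partial S_\tht}+g_\al\vert_{\partial S_\tht}$ as functions on $\partial S_\tht\setminus\{0\}$, we get
$$
{\mathcal J}_{\wt\de_\al}(u)=P_{out}(\wt\de_\al u\vert_{\partial S_\tht})
=P_{out}(\wt\de_1 u\vert_{\partial S_\tht})+P_{out}(g_\al u\vert_{\partial S_\tht})
={\mathcal J}_{\wt\de_1}(u)
$$
for every $u\in E^2(S^\circ_\tht;H)$. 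Hence ${\mathcal J}_{\wt\de_\al}={\mathcal J}_{\wt\de_1}$ for all $\al\in(0,\pi/\tht)$, which is the claimed independence.

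\medskip
\noindent\textbf{Proof of (2).} Now restrict to $0<\al<\pi/(2\tht)$, the range where $\de_\al$ itself is defined and $\wt\de_\al=\de_\al^{-1}$ on $\partial S_\tht\setminus\{0\}$. By Lemma \ref{00}(4), $\Ker{\mathcal J}_{\wt\de_\al}=\de_\al E^2(S^\circ_\tht;H)$. Since part (1) gives $\Ker{\mathcal J}_{\wt\de_\al}=\Ker{\mathcal J}_{\wt\de_1}$ independently of $\al$ in this subrange, the subspace $\de_\al E^2(S^\circ_\tht;H)$ does not depend on $\al\in(0,\pi/(2\tht))$ either. (Equivalently, one could argue directly: $\de_\al\de_1^{-1}=\de_\al\wt\de_1$ differs on $\partial S_\tht$ from the identity by $\de_\al g_1$ times a bounded factor, and the same Hardy-class stability shows that multiplication by $\de_\al\de_1^{-1}$ maps $\de_1 E^2(S^\circ_\tht;H)$ onto $\de_\al E^2(S^\circ_\tht;H)$; but routing it through the kernel of ${\mathcal J}$ is cleanest.)

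\medskip
\noindent I expect the only genuinely delicate point to be the verification that $g_\al u\in E^2(S^\circ_\tht;H)$, i.e.\ that multiplication by a bounded holomorphic operator-valued function preserves the Hardy--Smirnov class on the sector; this requires invoking the exhaustion $\{\Omega_n\}$ in the definition of $E^2$ and the boundedness of $g_\al$ inside (not merely on the boundary of) $S^\circ_\tht$ — which is exactly what Proposition \ref{diff wt de} was designed to provide. Everything else is a formal consequence of Lemma \ref{00}.
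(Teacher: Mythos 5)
Your argument is correct and follows essentially the same route as the paper: both deduce part (1) from Proposition \ref{diff wt de} by observing that $P_{out}\big((\wt\de_\al-\wt\de_1)u\vert_{\partial S_\tht}\big)=0$, and part (2) from the kernel identification $\Ker{\mathcal J}_{\wt\de_\al}=\de_\al E^2(S_\theta^\circ;H)$ in Lemma \ref{00}(4). The only difference is that you spell out the step the paper leaves implicit, namely that multiplication by the bounded holomorphic difference keeps $u$ inside $E^2(S^\circ_\tht;H)$, and your verification of this via the exhaustion $\{\Omega_n\}$ is sound.
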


\noindent{\it Proof.} {(1)} By Proposition \ref{diff wt de},
%%%%%%%%%%%%%%, and the definition of the Hankel-like operator, see Lemma \ref{00},
one has
$$
{\mathcal J}_{\wt\de_\alpha}(f)-{\mathcal J}_{\wt\de_{1}}(f)=
P_{out}\big((\wt\delta_{\alpha}-\wt\delta_{1}) f\vert
\partial S_\theta\big)=0.
$$

(2) By  Lemma \ref{00},
$
\de_\al E^2(S_\theta^\circ; H)=Ker({\mathcal J}_{\wt\de_\al}),
$
and the result follows from part (1). \qed

\section{Isomorphism between control and observation spaces}
\label{s aux}

Let $A$ be a sectorial operator of type $\omega\in(0,\pi)$. Our
aim in this section is to prove that the control and observation
spaces associated with $A$ as in Definition \ref{ctrobs} coincide
and have equivalent norms. Let us start with  the
following estimate:

For every $\tht\in(\omega,\pi)$ and $\xi\in\partial S_\theta$,
\begin{multline}
\label{estim norm}
\Big\Vert{A\over (\xi-A)(1+A)}\Big\Vert=
\Big\Vert{\xi(1+A)-(\xi-A)\over
(\xi+1)(\xi-A)(1+A)}\Big\Vert
\\
\le \Big\Vert{\xi\over (\xi+1)(\xi-A)}\Big\Vert+{1\over \vert \xi
+1\vert}\Vert(1+A)^{-1}\Vert\le {C_\theta+ \Vert(1+A)^{-1}\Vert\over
\vert \xi +1\vert},
\end{multline}
where the constant $C_\tht$ comes from the condition on $A$ to be sectorial.

\begin{proposition}
\label{prop1}
For any  $\theta \in (\omega, \pi)$,  the operators
$$
W_\tht \colon E^2(S_\theta^\circ; H)\to H_{-1}\ \hbox { and }\
{\mathcal O}_\tht\colon H_1\to E^2(\CC\backslash S_\theta; H)
$$
are well-defined, linear and bounded.
\end{proposition}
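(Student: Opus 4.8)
The plan is to establish the two boundedness statements separately, in each case reducing the claim to the estimate \eqref{estim norm} together with the definition of the Hardy--Smirnov space $E^2$ and the range norms on $H_{\pm1}$. I recall that $H_1 = T_A H$ with $\|T_A x\|_{H_1} = \|x\|_H$ and $H_{-1} = T_A^{-1}H$ with $\|T_A^{-1}x\|_{H_{-1}} = \|x\|_H$, where $T_A = \sqrt A/(1+A)$.

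First I would treat $W_\tht \colon E^2(S_\tht^\circ;H)\to H_{-1}$. Given $u\in E^2(S_\tht^\circ;H)$, I want to show that $W_\tht(u)\in H_{-1}$ with a norm bound, which by the definition of $H_{-1}$ amounts to showing that $T_A W_\tht(u)\in H$ and controlling its $H$-norm. Using the formula $W_\tht(u) = \frac1{\pi i}\int_{\partial S_\tht}(\xi-A)^{-1}\sqrt A\, u(\xi)\,d\xi$ and moving $T_A = \sqrt A(1+A)^{-1}$ inside the integral, the integrand becomes $\frac{A}{(\xi-A)(1+A)}\,u(\xi)$ (after absorbing one factor of $\sqrt A$; one should be slightly careful that $\sqrt A\cdot\sqrt A$ combine to $A$ on the relevant domain, which is fine since $u(\xi)\in H$ and the resolvent maps into $D(A)$). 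By \eqref{estim norm}, the operator norm of $\frac{A}{(\xi-A)(1+A)}$ is bounded by $(C_\tht + \|(1+A)^{-1}\|)/|\xi+1|$ for $\xi\in\partial S_\tht$. Hence
$$
\|T_A W_\tht(u)\|_H \le \frac{1}{\pi}\int_{\partial S_\tht}\frac{C_\tht+\|(1+A)^{-1}\|}{|\xi+1|}\,\|u(\xi)\|_H\,|d\xi|
\le C\left(\int_{\partial S_\tht}\frac{|d\xi|}{|\xi+1|^2}\right)^{1/2}\|u\|_{E^2(S_\tht^\circ;H)},
$$
by Cauchy--Schwarz, and the first integral converges since $|\xi+1|^{-2}$ is integrable along $\partial S_\tht$ (it is $O(|\xi|^{-2})$ at infinity and bounded near $0$ as $\theta<\pi$ keeps $\xi$ away from $-1$). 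This proves $W_\tht$ is well-defined and bounded into $H_{-1}$; linearity is clear.

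For ${\mathcal O}_\tht\colon H_1 \to E^2(\CC\setminus S_\tht;H)$, I would argue dually. An element of $H_1$ is $T_A x$ for $x\in H$, and by the extension formula for $\cO$ one has $(\cO(T_A x))(z) = \frac{\sqrt A}{1+A}\cdot\frac{1+A}{z-A}\,T_A x = \frac{A}{(z-A)(1+A)}\,x$ for $z\in\rho(A)$ (again combining the two $\sqrt A$ factors). Restricting to $z\in\CC\setminus S_\tht$ and invoking \eqref{estim norm} with $\xi$ replaced by a general point of $\CC\setminus S_\tht$ — the same computation gives $\|\frac{A}{(z-A)(1+A)}\| \le (C_\tht+\|(1+A)^{-1}\|)/|z+1|$ there, since the sectoriality estimate applies off any $S_{\tht'}$ with $\tht'<\tht$ and the points of $\partial S_\tht$ are covered by a limiting argument — one gets a pointwise bound $\|(\cO_\tht(T_A x))(z)\|_H \le C\|x\|_H/|z+1|$. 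To conclude membership in $E^2(\CC\setminus S_\tht;H)$ I would exhibit the exhausting sequence of subdomains $\Omega_n\uparrow \CC\setminus S_\tht$ and bound $\int_{\partial\Omega_n}\|(\cO_\tht(T_A x))(z)\|_H^2\,|dz|$ uniformly, which follows from the pointwise bound together with $\int_{\partial S_\tht}|z+1|^{-2}|dz|<\infty$; analyticity on $\CC\setminus S_\tht$ is immediate from the resolvent. Then $\|\cO_\tht(T_A x)\|_{E^2(\CC\setminus S_\tht;H)} \le C\|x\|_H = C\|T_A x\|_{H_1}$, as required.

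The main obstacle I anticipate is not the estimate itself — that is essentially \eqref{estim norm} plus Cauchy--Schwarz — but the bookkeeping at the two endpoints of the contour, namely near $0$ and near $\infty$, where one must check that the formal manipulations (pushing $T_A$ through the integral, combining fractional powers, and identifying the boundary integral with an $E^2$-norm) are legitimate; in particular one should verify that the Cauchy-type integral defining $W_\tht(u)$ converges absolutely in $H$ for $u\in E^2$, and that the restriction $\cO_\tht(T_A x)$ genuinely lies in the Smirnov class and not merely in a weighted $L^2$ space on $\partial S_\tht$ — this is where the analyticity of the resolvent on the \emph{open} complement and a standard subharmonicity/exhaustion argument are used.
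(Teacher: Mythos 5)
Your proposal is correct and follows essentially the same route as the paper: both parts reduce to the estimate \eqref{estim norm}, with Cauchy--Schwarz against $\int_{\partial S_\theta}|\xi+1|^{-2}|d\xi|<\infty$ for $W_\theta$, and the factorization $\sqrt A(z-A)^{-1}x=\frac{A}{(z-A)(1+A)}\,T_A^{-1}x$ for $\cO_\theta$ on $H_1$. Your extra care about the exhaustion/Smirnov-class membership is a point the paper leaves implicit but is handled exactly as you describe.
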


\begin{proof} Let
$u\in E^2(S_\theta^\circ; H)$. Then
$\Vert W_\theta u \Vert_{H_{-1}}
= \Vert \sqrt{A}(1+A)^{-1} \,W_\theta u\Vert_H$ whence
$$
{\pi\over 2}\Vert W_\theta u \Vert_{H_{-1}}^2\le \Vert
u\Vert^2_{E^2(S_\theta^\circ;H)}\int_{\partial S_\theta}\Big\Vert{\sqrt{A}\over
\xi-A}{\sqrt{A}\over 1+A}\Big\Vert^2 \vert d\xi\vert.
$$
By (\ref{estim norm}) it follows that the integral is finite and it proves the
boundedness of $W_\theta: E^2(S_\theta^\circ; H)\to H_{-1}$. Now, for
$x\in H_1$, note that
\begin{multline*}
\int_{\partial S_\theta}\Vert
{\mathcal O}_\tht x(z)\Vert^2_H \vert dz\vert= \int_{\partial S_\theta}\Vert
\sqrt{A}(z-A)^{-1}x\Vert^2_H \vert dz\vert
\\
\le\int_{\partial
S_\theta}\Big\Vert {A \over (z-A)(1+A)}\Big\Vert^2
\;
\big\Vert {1+A\over
\sqrt{A}} \,x\big\Vert^2_H \vert \,dz\vert
\le C\Vert x\Vert_1^2,
\end{multline*}
\medskip
where the last inequality is obtained again from (\ref{estim norm}). Hence
${\mathcal O}_\tht\colon H_1\to E^2(\CC\backslash S_\theta; H)$ is well defined and bounded. The  proposition  is proved.
\end{proof}

Recall Definition \ref{ctrobs}: the control space for $A$ is
$\Hctr:=\hbox{Im}\, W_\tht\subset H_{-1}$ endowed with the norm
$\Vert x\Vert_{A,\tht,\text{ctr}}
:=\min\{ \Vert u \Vert_{E^2(S_\theta^\circ;H)}\, ; \,\, x=W_\theta(u)\}$,
and the observation space for $A$ is defined as
$\Hobs:={\mathcal O}_\tht^{-1}\big(E^2(\CC\backslash{S_\theta})\big)\subset H_{-1}$
with the norm
$\Vert x\Vert_{A,\tht,\text{obs}}
:= \Vert {\mathcal O}_\theta x\Vert_{E^2(\CC\backslash{S_\theta}; H)}$.
In the next proposition,
\margp{cambiado}
we collect several basic facts about the spaces $\Hctr$ and $\Hobs$.
The arrows \lq\lq $\hookrightarrow$" mean continuous inclusions. The symbol
$\widehat W_\tht$ denotes the quotient mapping
$E^2(S_\theta^\circ;H)/ \hbox{Ker}\, W_\tht\to \hbox{Im}\,W_\tht$.

\begin{proposition}\label{inclusions}
\noindent (1) The mapping
$$
\widehat W_\tht\colon E^2(S_\theta^\circ;H)/ \hbox{Ker } W_\tht\to\Hctr
$$
is an isometric isomorphism, and $H_1\hookrightarrow \Hctr$.

\noindent (2) The mapping
$$
{\mathcal O}_\theta\colon\Hobs\to E^2(\CC\backslash{S_\theta}; H)
$$
is an isometry, and $H_1\hookrightarrow \Hobs \hookrightarrow H_{-1}$.

\noindent (3) The space  $\Hobs$ is complete.
\end{proposition}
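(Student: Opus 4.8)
The plan is to prove the three assertions in the order (2), (3), (1): statement (2) feeds into (3), and the only nontrivial ingredient of (1), the embedding $H_1\hookrightarrow\Hctr$, is best kept to the end.

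\smallskip

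\noindent\emph{Part (2).} The map $\cO_\tht\colon\Hobs\to E^2(\CC\backslash S_\tht;H)$ is an isometry by the very definition of the norm of $\Hobs$, namely $\Vert x\Vert_{A,\tht,\text{obs}}=\Vert\cO_\tht x\Vert_{E^2(\CC\backslash S_\tht;H)}$; this is a norm and not merely a seminorm because $\cO$ (hence $\cO_\tht$) is injective on $H_{-1}$, as recalled just before Proposition \ref{prop1}. The inclusion $H_1\hookrightarrow\Hobs$ is immediate from Proposition \ref{prop1}: for $x\in H_1\subset H_{-1}$ one has $\cO_\tht x\in E^2(\CC\backslash S_\tht;H)$, so $x\in\Hobs$ and $\Vert x\Vert_{A,\tht,\text{obs}}\le\Vert\cO_\tht\Vert\,\Vert x\Vert_1$. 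For the inclusion $\Hobs\hookrightarrow H_{-1}$ I would use that $\cO x(-1)=\sqrt A(-1-A)^{-1}x=-T_Ax$ for every $x\in H_{-1}$ (note $-1\in\CC\backslash S_\tht$ since $\tht<\pi$), together with the fact that point evaluation at the interior point $-1$ is a bounded functional on the Hardy--Smirnov space $E^2(\CC\backslash S_\tht;H)$. This gives
$$
\Vert x\Vert_{H_{-1}}=\Vert T_Ax\Vert_H=\Vert\cO_\tht x(-1)\Vert_H\le C\,\Vert\cO_\tht x\Vert_{E^2(\CC\backslash S_\tht;H)}=C\,\Vert x\Vert_{A,\tht,\text{obs}}.
$$

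\smallskip

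\noindent\emph{Part (3).} Let $(x_n)$ be a Cauchy sequence in $\Hobs$. Since $\cO_\tht$ is isometric and $E^2(\CC\backslash S_\tht;H)$ is complete, $\cO_\tht x_n\to v$ in $E^2(\CC\backslash S_\tht;H)$ for some $v$; and since $\Hobs\hookrightarrow H_{-1}$ by (2), $x_n\to x$ in $H_{-1}$ for some $x\in H_{-1}$. As $\cO\colon H_{-1}\to\hbox{Hol}(\rho(A);H)$ is continuous and convergence in $E^2$ forces locally uniform convergence on the interior, the two limits coincide: $\cO_\tht x=v\in E^2(\CC\backslash S_\tht;H)$. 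Hence $x\in\Hobs$ and $\Vert x_n-x\Vert_{A,\tht,\text{obs}}\to0$, so $\Hobs$ is complete.

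\smallskip

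\noindent\emph{Part (1).} That $\widehat W_\tht$ is an isometric isomorphism onto $\Hctr$ is formal: by Proposition \ref{prop1}, $W_\tht\colon E^2(S_\tht^\circ;H)\to H_{-1}$ is bounded, so $\operatorname{Ker}W_\tht$ is closed and $E^2(S_\tht^\circ;H)/\operatorname{Ker}W_\tht$ is a Hilbert space in which the quotient norm of each coset is attained; the range norm $\Vert\cdot\Vert_{A,\tht,\text{ctr}}$ is, by construction, precisely the push-forward of this quotient norm under $\widehat W_\tht$, whence $\widehat W_\tht$ is a surjective isometry (in particular $\Hctr$ is complete). There remains $H_1\hookrightarrow\Hctr$. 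Given $x\in H_1$, write $x=T_Ay$ with $y\in H$, $\Vert y\Vert_H=\Vert x\Vert_1$, and set $u(\xi):=c(1+\xi)^{-1}y$ for $\xi\in S_\tht^\circ$, with $c$ a numerical constant. Then $u\in E^2(S_\tht^\circ;H)$, because $(1+\xi)^{-1}$ is holomorphic on $S_\tht^\circ$ (its pole $-1$ lies outside $S_\tht$) and square integrable on $\partial S_\tht$, and $\Vert u\Vert_{E^2(S_\tht^\circ;H)}=|c|\,\Vert(1+\cdot)^{-1}\Vert_{E^2(S_\tht^\circ)}\,\Vert x\Vert_1$. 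To verify $W_\tht(u)=x$ I would apply the isometric isomorphism $T_A\colon H_{-1}\to H$: using $(\xi-A)^{-1}\sqrt A\,T_A=A(1+A)^{-1}(\xi-A)^{-1}$ and estimate (\ref{estim norm}), the integral
$$
T_AW_\tht(u)=\frac{c}{\pi i}\int_{\partial S_\tht}\frac{A}{(1+A)(\xi-A)}\,\frac{y}{1+\xi}\,d\xi
$$
converges absolutely in $H$; its integrand extends holomorphically in $\xi$ to the exterior sector $\CC\backslash S_\tht$ and decays there fast enough to deform the contour into $\CC\backslash S_\tht$, the only pole swept being that of $(1+\xi)^{-1}$ at $\xi=-1$. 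The resulting residue is a nonzero numerical multiple of $A(1+A)^{-2}y=T_Ax$, so for a suitable choice of $c$ one gets $T_AW_\tht(u)=T_Ax$, i.e.\ $W_\tht(u)=x$; hence $\Vert x\Vert_{A,\tht,\text{ctr}}\le\Vert u\Vert_{E^2(S_\tht^\circ;H)}\le C\Vert x\Vert_1$.

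\smallskip

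The main obstacle is exactly this last construction: producing, for each $x\in H_1$, a preimage $u$ of $x$ under $W_\tht$ with controlled $E^2$-norm. The subtlety is that the obvious candidates for $u$ involve resolvents $(\xi\pm A)^{-1}$ of $A$, which need not be holomorphic on all of $S_\tht^\circ$ once $\tht\ge\pi-\omega$; using instead a scalar weight $(1+\xi)^{-1}$ with a \emph{constant} operator coefficient sidesteps this, and then the identity $W_\tht(u)=x$ collapses to one residue computation whose only delicate part is the behaviour of $A(1+A)^{-1}(\xi-A)^{-1}$ near $0$ and near $\infty$ needed to justify the contour deformation.
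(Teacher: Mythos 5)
Your proposal is correct and follows essentially the same route as the paper: the isometry statements are definitional, completeness is proved exactly as in the paper (limits taken simultaneously in $H_{-1}$ and in $E^2(\CC\backslash S_\theta;H)$, identified through pointwise continuity of $\cO_\theta$), the embedding $\Hobs\hookrightarrow H_{-1}$ is recovered from values of $\cO_\theta x$ at exterior points (you evaluate at $-1$ via the Cauchy formula, the paper uses the same resolvent identity at general $\lambda\in\CC\backslash S_\theta$ and integrates over the boundary), and your preimage $u(\xi)=c(1+\xi)^{-1}y$ is, up to a constant, the paper's $u_{\lambda,x}(z)=(\lambda-z)^{-1}x$ with $\lambda=-1$, whose image $W_\theta(u_{\lambda,x})=2\sqrt A(\lambda-A)^{-1}x$ the paper likewise obtains from the Dunford--Riesz/residue calculus. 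The only cosmetic difference is that the paper keeps $\lambda$ general and observes that $2\sqrt A(\lambda-A)^{-1}x$ sweeps out $H_1$ as $x$ runs over $H$, whereas you fix $\lambda=-1$ and check $W_\theta u=x$ after applying $T_A$; both verifications amount to the same residue computation justified by estimate (\ref{estim norm}).
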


\begin{proof} (1) The norm of $x=W_\tht(u)$ in $\Hctr$ is exactly the quotient norm of
$u+\Ker W_\tht$ in
$E^2(S_\theta^\circ;H)/ \Ker  W_\tht$, so $\widehat W_\tht$ is an isometric isomorphism.

For any $\lambda \in \CC\backslash
S_\theta$ and any $x\in H$ the rational function
$u_{\lambda,x}(z):=(\lambda -z)^{-1}x$ belongs to $E^2(S_\theta^\circ; H)$.
Then the Dunford-Schwartz calculus gives us
$$
W_\theta(u_{\lambda,x})=2\sqrt{A}(\lambda-A)^{-1}x\in \Hctr.
$$
On the other hand, for fixed
$\lambda$, the vectors $y:=2\sqrt{A}(\lambda-A)^{-1}x$ range over the
whole space $H_1$ if $x$ runs over $H$.
Hence $H_1\subset \Hctr$.
\margp{cambiado}
The continuity of this inclusion follows from the estimate
$$
\Vert y\Vert_{A, \theta, ctr}\le \Vert u_{\lambda,x}\Vert_{E^2}
\le C_\lambda\Vert x\Vert_H
\le  C_\lambda^1\Vert y\Vert_1,
$$
where $C_\lambda$, $C^1_\lambda$
are constants depending on $\lambda$.

%Then it is straightforward to check that
%$$
%\Vert x\Vert_{A, \theta, ctr}\le C_\lambda\Vert x\Vert_1, \qquad x\in H_1,
%$$
%where $C_\lambda$ is a constant depending on $\lambda$.

\medskip
(2) That ${\mathcal O}_\theta\colon\Hobs\to E^2(\CC\backslash{S_\theta}; H)$ is an isometry is clear from the definition of
$\Hobs$, and then $H_1\hookrightarrow \Hobs$ is a straightforward consequence of this isometry and Proposition \ref{prop1}.

Now for every $\lambda\in \CC\setminus S_\theta$
%
 %%%%%\partial S_\theta\setminus \{0\}
and $x\in\Hobs$ we have that
$$
\sqrt {A}(1+A)^{-1}x=\big((\lambda+1)(1+A)^{-1}-1\big)\cO_\tht x(\lambda)
$$
 whence
$\Vert x\Vert_{-1}\le (a\vert\lambda\vert+b)\Vert {\mathcal O}_\theta x(\lambda)\Vert_H$,
and therefore we obtain
$c\Vert x\Vert_{-1}^2\le\Vert {\mathcal O}_\theta x\Vert_{E^2(\CC\backslash{S_\theta}; H)}=\Vert x\Vert_{A,\tht, \text{obs}}^2$
where $a,b, c$ are positive constants.

\medskip
(3)  Let $(x_n)\subset \Hobs$
be a Cauchy sequence in $\Hobs$.
By  (2) above, there exists $x \in H_{-1}$ and $v\in E^2(\CC\backslash S_\theta; H)$
such that $x_n\to x$ in $H_{-1}$ and
\margp{at the same time}
at the same time
${\mathcal O}_\theta x_n\to v$ in $E^2(\CC\backslash S_\theta;H)$.
Since the linear map $y\mapsto {\mathcal O}_\theta y(\lambda)$
is continuous in $y\in H_{-1}$ for each fixed $\lambda\in \CC\backslash S_\theta $, we conclude that
${\mathcal O}_\theta x=v, $ hence $x_n\to x$ in
$\Hobs$.
\end{proof}

\medskip
As it has been seen in Proposition \ref{prop1} (1), the composition mapping
$$
E^2(S_\theta^\circ; H)
\overset
{W_\theta} {\longrightarrow}
H_{-1}
\overset
{{\mathcal O}_\theta} {\longrightarrow}
\hbox{Hol}(\CC\backslash
S_\theta; H)
$$
is well defined. Now, via the Hankel-like operator, we  prove that
in fact the range of $\cO_\tht W_\theta$ lies in
$E^2(\CC\backslash S_\theta; H)$.

\begin{lemma}
\label{37}
For any $\tht\in(\omega, \pi)$ we have $\cO_\tht W_\theta = -{\mathcal J}_{\wt\de_1}$.
In particular, ${\mathcal O}_\theta W_\tht$ is
bounded from $E^2(S_\theta^\circ; H)$ to
$E^2(\CC\backslash S_\theta; H)$.
\end{lemma}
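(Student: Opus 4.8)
The plan is to compute $\cO_\theta W_\theta u$ for $u\in E^2(S_\theta^\circ;H)$ directly from the integral definitions and to recognize the result as a Cauchy integral, i.e.\ as $-P_{out}$ applied to $\wt\de_1 u|_{\partial S_\theta}$, which by Lemma \ref{00}(4) is exactly $-{\mathcal J}_{\wt\de_1}(u)$. First I would fix $z\in\CC\setminus S_\theta$ and write, using the definition of $W_\theta$ and the extension of $\cO_\theta$ to $H_{-1}$,
$$
\big(\cO_\theta W_\theta u\big)(z)=\sqrt A(z-A)^{-1}\,\frac{1}{\pi i}\int_{\partial S_\theta}(\xi-A)^{-1}\sqrt A\,u(\xi)\,d\xi
=\frac{1}{\pi i}\int_{\partial S_\theta}\sqrt A\,(z-A)^{-1}(\xi-A)^{-1}\sqrt A\,u(\xi)\,d\xi,
$$
moving the bounded operator $\sqrt A(z-A)^{-1}$ (recall $z\notin S_\theta\supseteq\sigma(A)$, so this makes sense; the composition $\sqrt A(z-A)^{-1}(\xi-A)^{-1}\sqrt A=A(z-A)^{-1}(\xi-A)^{-1}$ is bounded as in \eqref{estim norm}) inside the integral. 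The resolvent identity $(z-A)^{-1}(\xi-A)^{-1}=\frac{1}{\xi-z}\big[(z-A)^{-1}-(\xi-A)^{-1}\big]$ then splits the integrand.

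Next I would handle the two resulting terms. The term containing $(z-A)^{-1}$ pulls the $z$-dependent operator out of the $\xi$-integral, leaving $\frac{1}{\pi i}\,A(z-A)^{-1}\int_{\partial S_\theta}\frac{u(\xi)}{\xi-z}\,d\xi$; but $u\in E^2(S_\theta^\circ;H)$ and $z$ lies outside $S_\theta$, so by the Cauchy theorem for the Hardy--Smirnov class this integral vanishes (equivalently, $P_{out}u=0$). The surviving term is
$$
-\frac{1}{\pi i}\int_{\partial S_\theta}\frac{A(\xi-A)^{-1}u(\xi)}{\xi-z}\,d\xi
=-\frac{1}{2\pi i}\int_{\partial S_\theta}\frac{\wt\de_1(\xi)\,u(\xi)}{\xi-z}\,d\xi,
$$
where I have used that $\wt\de_1(\xi)=\frac{A+\xi}{A-\xi}=-(1+2A(\xi-A)^{-1})$, so that $\frac12\wt\de_1(\xi)u(\xi)=-\tfrac12 u(\xi)-A(\xi-A)^{-1}u(\xi)$; the extra $-\tfrac12 u(\xi)$ again integrates to zero over $\partial S_\theta$ against $(\xi-z)^{-1}$ since $z\notin S_\theta$. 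Comparing with the formula for $P_{out}$ in Lemma \ref{00}(2), the right-hand side is precisely $-P_{out}(\wt\de_1 u|_{\partial S_\theta})(z)=-{\mathcal J}_{\wt\de_1}(u)(z)$.

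The one point needing care — and the main technical obstacle — is the legitimacy of interchanging $\sqrt A(z-A)^{-1}$ with the $\xi$-integral and of applying Cauchy's theorem, since the individual pieces $(\xi-A)^{-1}\sqrt A u(\xi)$ are only in $H_{-1}$, not in $H$, and the contour $\partial S_\theta$ is unbounded. I would resolve this by working in $H_{-1}$ throughout (all the operators above are bounded on $H_{-1}$, and $W_\theta u\in H_{-1}$ by Proposition \ref{prop1}), and by justifying the vanishing Cauchy integrals via the exhausting sequence $\{S_\theta^\circ\cap\Omega_n\}$ from the definition of $E^2$: on each bounded piece the integrand is $H$-valued and holomorphic, and the defining uniform $L^2$-bound plus \eqref{estim norm} controls the boundary contributions, so one passes to the limit. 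Once this is established, boundedness of $\cO_\theta W_\theta$ from $E^2(S_\theta^\circ;H)$ to $E^2(\CC\setminus S_\theta;H)$ is immediate from Lemma \ref{00}, since ${\mathcal J}_{\wt\de_1}$ is there identified as a bounded Hankel-like operator. $\qed$
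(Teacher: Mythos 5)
Your argument is correct and is essentially the paper's own proof: split $A(z-A)^{-1}(\xi-A)^{-1}$ by the resolvent identity, discard the Cauchy integrals of $u$ against $(\xi-z)^{-1}$ (which vanish since $P_{out}u=0$ for $u\in E^2(S_\theta^\circ;H)$ and $z\notin S_\theta$), and identify the remaining boundary integral via $\wt\de_1(\xi)=-(I+2A(\xi-A)^{-1})$; the paper merely groups the terms slightly differently (absorbing the identity part into $\frac{A+\lambda}{\lambda-A}P_{out}u$), and your added care about interchanging $\sqrt A(z-A)^{-1}$ with the integral in $H_{-1}$ is a reasonable supplement to what the paper leaves implicit. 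The only blemish is a pair of compensating sign slips: the surviving term is $+\frac{1}{2\pi i}\int_{\partial S_\theta}\wt\de_1(\xi)u(\xi)(\xi-z)^{-1}\,d\xi$ rather than $-\frac{1}{2\pi i}\int_{\partial S_\theta}\wt\de_1(\xi)u(\xi)(\xi-z)^{-1}\,d\xi$, and with the paper's convention $P_{out}f(z)=-\frac{1}{2\pi i}\int_{\partial S_\theta}f(\xi)(\xi-z)^{-1}\,d\xi$ that term is exactly $-P_{out}(\wt\de_1 u)(z)=-{\mathcal J}_{\wt\de_1}(u)(z)$, so the two slips cancel and the stated conclusion is unaffected.
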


\noindent {\it Proof.} Take $\lambda \in
\CC\backslash S^\circ_\theta,$ and $u\in E^2(S^\circ_\theta; H)$.
We have
\begin{eqnarray*}
&\quad&({\mathcal O}_\theta W_\theta u)(\lambda)=
{1\over 2\pi i}\int_{\partial S_\theta}{2A u(z)\over (\lambda-A)(z-A)}\,dz\cr
&=&{1\over 2\pi i}\int_{\partial S_\theta}{2A u(z)\over (\lambda-A)(z-\lambda)}\,dz
+\,{1\over 2\pi i}\int_{\partial S_\theta}\big(I +\frac {A+z} {A-z}\big)\;
\frac{u(z)}{z-\lambda}\,dz\cr
&=&{1\over 2\pi i}{A+\lambda\over \lambda-A}
\int_{\partial S_\theta}{u(z)\over z-\lambda}\,dz+
{1\over 2\pi i}\int_{\partial S_\theta}{A+z \over A-z}{u(z)\over z-\lambda}\,dz\cr
&=&-{A+\lambda\over \lambda-A}(P_{out}u)(\lambda)
-{\mathcal J}_{\wt\de_1}(u)(\lambda)=-{\mathcal J}_{\wt\de_1}(u)(\lambda),\cr
\end{eqnarray*}

as we wanted to show.\qed

\medskip
As a first application of the above result we get
the following assertion.
% that $\Hctr\hookrightarrow \Hobs$.

\begin{proposition} \label{19} There is a continuous embedding
$\Hctr\hookrightarrow \Hobs$.
%Im$(W_A)\subset H_\theta^{(1)}.$
\end{proposition}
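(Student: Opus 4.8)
The plan is to use Lemma \ref{37} to realize the observation-model norm of an element of $\Hctr$ directly in terms of its control-model norm. Let $x\in\Hctr$. By Proposition \ref{inclusions}(1), $x=W_\tht(u)$ for some $u\in E^2(S_\tht^\circ;H)$ with $\Vert u\Vert_{E^2(S_\tht^\circ;H)}=\Vert x\Vert_{A,\tht,\text{ctr}}$. First I would check that $x$ actually belongs to $\Hobs$: by Lemma \ref{37}, $\cO_\tht x=\cO_\tht W_\tht u=-{\mathcal J}_{\wt\de_1}(u)$, and by Lemma \ref{00}(4) the Hankel-like operator ${\mathcal J}_{\wt\de_1}$ maps $E^2(S_\tht^\circ;H)$ into $E^2(\CC\backslash S_\tht;H)$; hence $\cO_\tht x\in E^2(\CC\backslash S_\tht;H)$, which by Definition \ref{ctrobs} is exactly the condition $x\in\Hobs$. (Note that $x\in H_{-1}$ already, since $\Hctr\subset H_{-1}$ by Proposition \ref{prop1}.)

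Next I would estimate the norm. Using the isometry of Proposition \ref{inclusions}(2) together with the identity just obtained,
$$
\Vert x\Vert_{A,\tht,\text{obs}}
=\Vert \cO_\tht x\Vert_{E^2(\CC\backslash S_\tht;H)}
=\Vert {\mathcal J}_{\wt\de_1}(u)\Vert_{E^2(\CC\backslash S_\tht;H)}
\le \Vert {\mathcal J}_{\wt\de_1}\Vert\,\Vert u\Vert_{E^2(S_\tht^\circ;H)}
=\Vert {\mathcal J}_{\wt\de_1}\Vert\,\Vert x\Vert_{A,\tht,\text{ctr}},
$$
where $\Vert {\mathcal J}_{\wt\de_1}\Vert$ is finite because ${\mathcal J}_{\wt\de_1}=P_{out}\circ(\wt\de_1\,\cdot\,)$ is a composition of the bounded projection $P_{out}$ from Lemma \ref{00}(2) with multiplication by the function $\wt\de_1$, which is bounded on $\partial S_\tht\setminus\{0\}$ as recorded at the start of Section \ref{complex} (and also follows directly from the boundedness statement in Lemma \ref{37}). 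This inequality says precisely that the inclusion $\Hctr\hookrightarrow\Hobs$ is continuous.

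I do not expect a serious obstacle here: everything needed has been prepared in Lemmas \ref{00} and \ref{37} and in Proposition \ref{inclusions}. The one point requiring a little care is the logical order — one must first use Lemma \ref{37} to see that $\cO_\tht W_\tht u$ lands in $E^2(\CC\backslash S_\tht;H)$ (so that the element is genuinely in $\Hobs$) before invoking the isometry of Proposition \ref{inclusions}(2) to identify its $\Hobs$-norm with $\Vert\cO_\tht x\Vert_{E^2(\CC\backslash S_\tht;H)}$. A secondary subtlety is that the minimizing $u$ in the definition of the control norm is not unique, but this is harmless: the bound above holds for every $u$ with $x=W_\tht(u)$, and taking the infimum over such $u$ gives the stated estimate with constant $\Vert{\mathcal J}_{\wt\de_1}\Vert$.
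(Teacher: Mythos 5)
Your argument is correct and is essentially the paper's own proof: both take $x=W_\tht u$, invoke Lemma \ref{37} to get $\cO_\tht x=-{\mathcal J}_{\wt\de_1}(u)\in E^2(\CC\backslash S_\tht;H)$, and then bound $\Vert x\Vert_{A,\tht,\text{obs}}$ by the boundedness of ${\mathcal J}_{\wt\de_1}$ (equivalently of $\cO_\tht W_\tht$) times $\Vert u\Vert_{E^2(S_\tht^\circ;H)}$, passing to the infimum over $u$. No issues.
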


\noindent {\it Proof.}
Take any $x\in\Hctr$. Then $x= W_\theta u$ for some function $u \in E^2(S_\theta^\circ; H)$.
By Lemma \ref{37}, ${\mathcal O}_\theta W_\theta u\in  E^2(\CC\backslash
S_\theta; H)$, $x=W_\theta u \in \Hobs$,  and
$$
\Vert x\Vert_{A,\tht,\text{obs}} = \Vert {\mathcal O}_\theta
x\Vert_{E^2(\CC\backslash{S_\theta}; H)}=\Vert {\mathcal O}_\theta W_\theta u\Vert_{E^2(\CC\backslash
S_\theta; H)}\le C_\theta \Vert u\Vert_{E^2(S_\theta^\circ; H)}.
$$
Therefore $\Vert x\Vert_{A,\tht,\text{obs}}\le C_\theta \Vert x\Vert_{A,\tht,\text{ctr}}$
and $\Hctr\hookrightarrow \Hobs$.
 \qed

\medskip
Now our aim is to prove the (continuous) reverse inclusion. We shall use the following approximation procedure.
For any $x\in H_{-1}$, we define
\begin{equation}
\label{eq6}
x_\varepsilon:={(1-\varepsilon^2)A\over
(A+\varepsilon)(1+\varepsilon A)}x=
{\varepsilon^{-1}\over
A+\varepsilon^{-1}}x-{\varepsilon\over A+\varepsilon}x, \qquad 0<\varepsilon<1.
\end{equation}
If $v={\mathcal O}_\theta x$, then ${\mathcal O}_\theta
x_{\varepsilon}=v_{(\varepsilon)}, $ where
$$
v_{(\varepsilon)}(z):={\varepsilon}^{-1}{v(z)-v(-\varepsilon^{-1})\over
z+\varepsilon^{-1}}-\varepsilon{v(z)-v(-\varepsilon{})\over
z+\varepsilon}.
$$
This follows from  definitions of $\cO_\theta $ and $x_\varepsilon$.

\begin{lemma} \label{6}(1) For any $x\in H$, one has
\margp{one has}
$x_\varepsilon \to x$
in $H$ as $\varepsilon \to 0^+$.

\noindent (2) For any $v\in E^2(\CC\backslash S_\theta; H)$,
$v_{(\varepsilon)}\to v$ in $E^2(\CC \backslash S_\theta; H)$ as
$\varepsilon \to 0^+$.

\noindent (3) $H_1$ is dense in $\Hobs$.
\end{lemma}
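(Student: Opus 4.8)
The plan is to settle parts (1) and (3) quickly once part (2) is in hand, since part (2) carries the analytic content. For part (1) I would read $x_\varepsilon$ off the second expression in \eqref{eq6} as $x_\varepsilon=(1+\varepsilon A)^{-1}x-(1+\varepsilon^{-1}A)^{-1}x$. The resolvents $(1+tA)^{-1}$, $t>0$, are uniformly bounded on $H$ by sectoriality of $A$, so it suffices to check convergence on dense subsets: for $x\in D(A)$ one has $(1+tA)^{-1}x-x=-t\,(1+tA)^{-1}Ax\to0$ as $t\to0^{+}$, while for $x=Ay\in\operatorname{Im}A$ (dense, since $A$ has dense range) one has $(1+tA)^{-1}x=t^{-1}y-t^{-1}(1+tA)^{-1}y\to0$ as $t\to\infty$. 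Specializing to $t=\varepsilon$ in the first limit and $t=\varepsilon^{-1}$ in the second gives $x_\varepsilon\to x$ in $H$.

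For part (2) it is convenient to note that, with $N$ denoting the operator of multiplication by $z$ on $E^2(\CC\backslash S_\theta;H)$ (whose resolvent is $(N-\lambda)^{-1}w=\frac{w(\cdot)-w(\lambda)}{\,\cdot\,-\lambda}$ for $\lambda$ in the domain of holomorphy, in particular for $\lambda=-\varepsilon^{\pm1}$), one has $v_{(\varepsilon)}=(1+\varepsilon N)^{-1}v-(1+\varepsilon^{-1}N)^{-1}v$, so the situation is formally parallel to part (1). I would then proceed in two stages. \emph{Stage 1:} the family $S_\varepsilon\colon v\mapsto v_{(\varepsilon)}$ is uniformly bounded on $E^2(\CC\backslash S_\theta;H)$ for $\varepsilon\in(0,1)$. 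Working with the boundary values $v_{(\varepsilon)}(\zeta)=\varepsilon^{-1}\frac{v(\zeta)-v(-\varepsilon^{-1})}{\zeta+\varepsilon^{-1}}-\varepsilon\frac{v(\zeta)-v(-\varepsilon)}{\zeta+\varepsilon}$ on $\partial S_\theta$, one estimates each term in $L^2(\partial S_\theta;H)$ using the elementary inequality $|\zeta+a|\ge c_\theta(|\zeta|+a)$ for $\zeta\in\partial S_\theta$, $a>0$, the scaling identity $\int_{\partial S_\theta}|\zeta+a|^{-2}\,|d\zeta|\le C_\theta/a$, and the reproducing-kernel bound $\|w(\lambda)\|_H\le C_\theta\,\operatorname{dist}(\lambda,\partial S_\theta)^{-1/2}\|w\|_{E^2}$ at $\lambda=-\varepsilon^{-1}$ and $\lambda=-\varepsilon$; a short computation then bounds $\|v_{(\varepsilon)}\|_{E^2}$ by $C\|v\|_{E^2}$ with $C$ independent of $\varepsilon$. \emph{Stage 2:} $S_\varepsilon v\to v$ on the dense subspace of $E^2(\CC\backslash S_\theta;H)$ spanned by the functions $v_{\lambda,c}(z):=c\,(z-\lambda)^{-1}$, $\lambda\in S^\circ_\theta$, $c\in H$ (density follows from the duality of Lemma~\ref{00}(1) by a standard Cauchy-formula argument): a direct computation collapses $S_\varepsilon v_{\lambda,c}$ to the scalar multiple $\bigl(\frac1{1+\varepsilon\lambda}-\frac{\varepsilon}{\varepsilon+\lambda}\bigr)v_{\lambda,c}$, which tends to $v_{\lambda,c}$ in $E^2$ as $\varepsilon\to0$ (here $\lambda\neq0$). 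Uniform boundedness together with convergence on a dense set gives $S_\varepsilon v\to v$ for every $v$. I expect Stage~1 to be the main obstacle: one cannot simply pass to the limit under the boundary integral by dominated convergence, since $v(-\varepsilon)$ may grow like $\varepsilon^{-1/2}$, so the uniform-boundedness estimate really must be carried out by hand and then combined with density.

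Part (3) then follows. Given $x\in\Hobs\subset H_{-1}$, write $x=T_A^{-1}y$ with $y=T_Ax\in H$; a computation gives $T_A^{-1}x_\varepsilon=(1-\varepsilon^2)\,(1+A)^2(A+\varepsilon)^{-1}(1+\varepsilon A)^{-1}y$, and since the rational function $(1+z)^2\bigl((z+\varepsilon)(1+\varepsilon z)\bigr)^{-1}$ has its poles $-\varepsilon,-\varepsilon^{-1}$ in $\rho(A)$, this operator is bounded on $H$; hence $x_\varepsilon\in T_AH=H_1$ for every $\varepsilon\in(0,1)$. On the other hand, as recorded just before \eqref{eq6}, $ {\mathcal O}_\theta x_\varepsilon=v_{(\varepsilon)}$ where $v:={\mathcal O}_\theta x\in E^2(\CC\backslash S_\theta;H)$, and $ {\mathcal O}_\theta$ is an isometry on $\Hobs$ by Proposition~\ref{inclusions}(2), so $\|x_\varepsilon-x\|_{A,\theta,\text{obs}}=\|v_{(\varepsilon)}-v\|_{E^2}\to0$ by part (2). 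Thus $x$ is a limit in $\Hobs$ of elements of $H_1$, which proves (3).
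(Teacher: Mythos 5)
Your argument is correct, and it reaches the same overall skeleton as the paper — prove that the regularization $x\mapsto x_\varepsilon$ is an approximate identity on $H$ and on $E^2(\CC\backslash S_\theta;H)$, note $x_\varepsilon\in H_1$, and transfer via the isometry $\mathcal O_\theta$ of Proposition~\ref{inclusions}(2) — but it differs in how part (2) is established. The paper identifies $v\mapsto v_{(\varepsilon)}$ with the sectorial expression $(1-\varepsilon^2)M_z^T\bigl((M_z^T+\varepsilon)(1+\varepsilon M_z^T)\bigr)^{-1}$, using that $M_z^T$ is the adjoint of the $\theta$-sectorial multiplication operator $M_z$ on $E^2(S_\theta^\circ;H)$ (cited from \cite{Yak}), and then invokes the same abstract approximation result \cite[Proposition 2.1.1]{Haase} that settles part (1); you instead carry out the "straightforward direct estimates" that the paper only mentions parenthetically: uniform boundedness of $v\mapsto v_{(\varepsilon)}$ via the boundary $L^2$ bound together with the reproducing-kernel estimate $\|v(-\varepsilon^{\pm1})\|\lesssim \operatorname{dist}(-\varepsilon^{\pm1},\partial S_\theta)^{-1/2}\|v\|_{E^2}$, plus convergence on the span of the Cauchy kernels $c(z-\lambda)^{-1}$, whose density follows from the duality of Lemma~\ref{00}(1). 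The abstract route is shorter and reuses machinery already in place; your route is self-contained (it also reproves part (1) by hand rather than citing Haase) and makes explicit the cancellation that keeps the point-evaluation terms under control. You also supply the verification that $x_\varepsilon\in H_1$ (via boundedness of $(1+A)^2(A+\varepsilon)^{-1}(1+\varepsilon A)^{-1}$), a step the paper asserts without proof. No gaps.
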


\noindent {\it Proof.}
Part
(1) follows from (\ref{eq6}) and \cite[Proposition 2.1.1]{Haase}.
Part (2)
can be deduced  in the same way as part (1). Indeed, the
multiplication operator $M_z$ on
$E^2(S_\theta^\circ; H)$ given by
$$
M_z(u)(z)=zu(z), \qquad z\in S_\theta^\circ, \qquad u \in {\mathcal D}(M_z),
$$
is the adjoint to $M_z^T$ on
$E^2(\CC\backslash S_\theta; H)$, see for instance \cite[Formula (2.7)]{Yak}.
Hence both are
$\theta$-sectorial and since
$$
v_\varepsilon={(1-\varepsilon^2)M_z^T\over
(M^T_z+\varepsilon)(1+\varepsilon M^T_z)}\;v,
$$
(\cite[Proposition 1.2]{Yak}), the assertion follows again
from \cite[Proposition 2.1.1]{Haase}. (One can also deduce it from straightforward
direct estimates.)

Finally take any $x\in \Hobs$. By (2) of Proposition
\ref{inclusions}, $\Hobs\hookrightarrow H_{-1}$. Note that
$x_\varepsilon \in H_1$ for any $\varepsilon \in (0,1)$. By part
(2) of
\margp{cambiado}
this
lemma, we have that $x_\varepsilon \to x$ in $\Hobs$ as
$\varepsilon \to 0^+$. This shows part (3). \qed

\begin{lemma}\label{sabina}
${\mathcal O}_\theta \Hobs \subset {\mathcal H}(\delta_\alpha)$ for all $\al\in(0,\pi/2\tht)$.
\end{lemma}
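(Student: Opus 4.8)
The plan is to prove the inclusion first on the dense subspace $H_1$, where one can use the explicit identity $\cO_\tht W_\tht=-{\mathcal J}_{\wt\de_1}$, and then to pass to the limit inside $\Hobs$; the two ingredients that make this work are the closedness of ${\mathcal H}(\de_\al)$ and the fact that the Hankel-like operator ${\mathcal J}_{\wt\de_\al}$ does not depend on $\al$.

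First I would treat $x\in H_1$. By Proposition~\ref{inclusions}(1) we have $H_1\hookrightarrow\Hctr$, so $x=W_\tht u$ for some $u\in E^2(S_\tht^\circ;H)$. Then Lemma~\ref{37} gives
$$
{\mathcal O}_\tht x={\mathcal O}_\tht W_\tht u=-{\mathcal J}_{\wt\de_1}(u).
$$
Since $\tht<\pi$, both $1$ and the given $\al$ (which satisfies $\al<\pi/(2\tht)<\pi/\tht$) lie in $(0,\pi/\tht)$, so Proposition~\ref{Hank}(1) yields ${\mathcal J}_{\wt\de_1}={\mathcal J}_{\wt\de_\al}$. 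Hence ${\mathcal O}_\tht x=-{\mathcal J}_{\wt\de_\al}(u)\in\operatorname{Im}{\mathcal J}_{\wt\de_\al}={\mathcal H}(\de_\al)$, the last equality being part (4) of Lemma~\ref{00} (valid because $\al<\pi/(2\tht)$). Thus ${\mathcal O}_\tht H_1\subset{\mathcal H}(\de_\al)$.

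Now let $x\in\Hobs$ be arbitrary. By Lemma~\ref{6}(3), $H_1$ is dense in $\Hobs$; choose $x_n\in H_1$ with $x_n\to x$ in $\Hobs$. Since ${\mathcal O}_\tht$ is an isometry from $\Hobs$ into $E^2(\CC\backslash S_\tht;H)$ (Proposition~\ref{inclusions}(2)), it follows that ${\mathcal O}_\tht x_n\to{\mathcal O}_\tht x$ in $E^2(\CC\backslash S_\tht;H)$. Each ${\mathcal O}_\tht x_n$ belongs to ${\mathcal H}(\de_\al)$ by the previous paragraph, and by Lemma~\ref{00}(3) the space ${\mathcal H}(\de_\al)=\bigl(\de_\al E^2(S_\tht^\circ;H)\bigr)^{\perp}$ is an annihilator relative to the pairing \eqref{corch}; since that pairing is continuous, the annihilator is a closed subspace of $E^2(\CC\backslash S_\tht;H)$, so the limit ${\mathcal O}_\tht x$ lies in ${\mathcal H}(\de_\al)$ as well. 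This proves the lemma.

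The only genuinely delicate point is the passage from $\wt\de_1$ to $\de_\al$: Lemma~\ref{37} only produces the operator ${\mathcal J}_{\wt\de_1}$, whose range is a priori attached to $\de_1$ rather than to the prescribed $\de_\al$, and it is exactly the $\al$-independence of the Hankel-like operator (Proposition~\ref{Hank}, which itself rests on the analytic-continuation estimate of Proposition~\ref{diff wt de}), together with the closedness of ${\mathcal H}(\de_\al)$, that bridges this gap. Equivalently, one may note directly that ${\mathcal O}_\tht\Hctr=\operatorname{Im}{\mathcal J}_{\wt\de_\al}={\mathcal H}(\de_\al)$ and that $\Hctr$ is dense in $\Hobs$ (since it contains $H_1$), and conclude in the same way.
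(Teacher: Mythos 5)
Your proof is correct and follows essentially the same route as the paper: reduce to $H_1$ using density of $H_1$ in $\Hobs$, the isometry of ${\mathcal O}_\theta$, and the closedness of ${\mathcal H}(\delta_\alpha)$, then use $H_1\hookrightarrow\Hctr$, Lemma~\ref{37}, the $\alpha$-independence from Proposition~\ref{Hank}, and Lemma~\ref{00}(4). The only cosmetic difference is the order of presentation (dense subspace first, then the limiting argument), so there is nothing to add.
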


\noindent {\it Proof.} It suffices to check that
${\mathcal O}_\theta H_1\subset {\mathcal H}(\delta_\alpha)$,
because
$\cO_\theta:\Hobs\to E^2(\CC\backslash S_\theta; H)$
is an
isometry, $H_1$ is dense in $\Hobs$ and ${\mathcal
H}(\delta_\alpha)$ is a closed subspace of $E^2(\CC\backslash
S_\theta; H)$. Then, by (1) of Proposition \ref{inclusions}, it is
enough again to show that ${\mathcal O}_\theta \Hctr\subset
{\mathcal H}(\delta_\alpha)$.  Take $x\in \Hctr$,  $x=W_\theta(u)$
with $u \in E^2(S_\theta^\circ;H)$. We apply Lemma \ref{37} to
obtain  ${\mathcal O}_\theta(x)=-{\mathcal J}_{\tilde
\delta_1}(u).$ Finally note that ${\mathcal J}_{\tilde
\delta_1}(u) \in  {\mathcal H}(\delta_\alpha)$ by Proposition
\ref{Hank} and Lemma \ref{00} (4). \qed

\begin{proposition}
\label{122}
For every $\theta \in (\omega, \pi)$,
$\Hobs=\Hctr$ with equivalent norms.
\end{proposition}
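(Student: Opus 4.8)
We already know from Proposition~\ref{19} that $\Hctr\hookrightarrow\Hobs$ continuously, so it remains to establish the reverse continuous inclusion $\Hobs\hookrightarrow\Hctr$. The natural strategy is to use the factorization of $\cO_\tht W_\tht$ through the Hankel-like operator furnished by Lemma~\ref{37}, together with the isomorphism property of $\widehat{{\mathcal J}_{\wt\de_1}}$ from Lemma~\ref{00}(4), and the density of $H_1$ in $\Hobs$ from Lemma~\ref{6}(3). First I would recall that by Lemma~\ref{37} we have $\cO_\tht W_\tht=-{\mathcal J}_{\wt\de_1}$, and by Lemma~\ref{00}(4) the operator ${\mathcal J}_{\wt\de_1}$ has range exactly ${\mathcal H}(\de_1)$ and induces an isomorphism $\widehat{{\mathcal J}_{\wt\de_1}}\colon {\mathcal Q}(S_\tht^\circ,\de_1)\to {\mathcal H}(\de_1)$. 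Combining this with the isometric isomorphism $\widehat W_\tht\colon E^2(S_\tht^\circ;H)/\Ker W_\tht\to\Hctr$ of Proposition~\ref{inclusions}(1) and noting $\Ker W_\tht=\delta_1 E^2(S_\tht^\circ;H)$ (which one obtains from $\Ker W_\tht=\Ker(\cO_\tht W_\tht)=\Ker{\mathcal J}_{\wt\de_1}=\de_1 E^2$, using injectivity of $\cO_\tht$ on $H_{-1}$), we get that $\cO_\tht$ restricted to $\Hctr$ is an isomorphism of $\Hctr$ onto ${\mathcal H}(\de_1)$.

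\medskip

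The second half of the argument is to show $\cO_\tht(\Hobs)\subseteq {\mathcal H}(\de_1)$, so that $\cO_\tht$ maps $\Hobs$ into the image of $\Hctr$ under the same isometry $\cO_\tht$. This is precisely Lemma~\ref{sabina} (with $\al=1$, which is allowed since $\tht<\pi$ gives $1\cdot\tht<\pi$, but for the closedness of $\de_\al E^2$ one needs $\al<\pi/2\tht$; if $\tht\ge\pi/2$ one simply takes some admissible $\al<\pi/2\tht$ and uses Proposition~\ref{Hank}(2) to identify $\de_\al E^2=\de_1 E^2$ — wait, Proposition~\ref{Hank}(2) requires $\al<\pi/(2\tht)$ on both sides, so one works throughout with such an $\al$ and only passes to $\de_1$-language via $\wt\de_1$, which is what Lemma~\ref{37} already does). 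Concretely, Lemma~\ref{sabina} gives $\cO_\tht\Hobs\subseteq{\mathcal H}(\de_\al)$. Since $\cO_\tht\colon\Hobs\to E^2(\CC\setminus S_\tht;H)$ is an isometry with closed range contained in ${\mathcal H}(\de_\al)$, and since $\cO_\tht(\Hctr)={\mathcal H}(\de_\al)$ by the first paragraph, every $x\in\Hobs$ satisfies $\cO_\tht x\in{\mathcal H}(\de_\al)=\cO_\tht(\Hctr)$; because $\cO_\tht$ is injective on $H_{-1}$ (Proposition~\ref{inclusions}(2), $\Hobs\hookrightarrow H_{-1}$, and the injectivity noted right after the definition of the extension of $\cO$ to $H_{-1}$), it follows that $x\in\Hctr$. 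Hence $\Hobs\subseteq\Hctr$ as sets.

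\medskip

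Finally, for the norm equivalence: on $\Hctr$ we have $\|x\|_{A,\tht,\mathrm{ctr}}=\|\widehat W_\tht^{-1}x\|_{{\mathcal Q}}$ and $\|x\|_{A,\tht,\mathrm{obs}}=\|\cO_\tht x\|_{E^2}=\|\widehat{{\mathcal J}_{\wt\de_\al}}(\widehat W_\tht^{-1}x)\|_{{\mathcal H}(\de_\al)}$, and since $\widehat{{\mathcal J}_{\wt\de_\al}}$ is an isomorphism of Banach spaces these two norms are equivalent on $\Hctr$; as $\Hctr=\Hobs$ as vector spaces, the two norms are equivalent on $\Hobs$ as well, and completeness of $\Hobs$ (Proposition~\ref{inclusions}(3)) is consistent with this. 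I expect the main obstacle to be a purely bookkeeping one: keeping the ranges of the parameter $\al$ straight (the Hankel operator ${\mathcal J}_{\wt\de_\al}$ is defined for $\al<\pi/\tht$, but its kernel/range description and the identification $\de_\al E^2=\Ker{\mathcal J}_{\wt\de_\al}$ only hold for $\al<\pi/(2\tht)$), and making sure the chain of identifications $\Ker W_\tht=\de_\al E^2(S_\tht^\circ;H)$ is justified before invoking the quotient isomorphism $\widehat{{\mathcal J}_{\wt\de_\al}}$. No new analytic input is needed beyond what Lemmas~\ref{00}, \ref{37}, \ref{sabina}, Proposition~\ref{Hank} and Proposition~\ref{inclusions} already supply.
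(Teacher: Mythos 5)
Your proposal is correct and follows essentially the same route as the paper: the forward inclusion from Proposition~\ref{19}, and the reverse inclusion by combining Lemma~\ref{sabina}, the factorization $\cO_\tht W_\tht=-{\mathcal J}_{\wt\de_1}={-\mathcal J}_{\wt\de_\al}$ (Lemma~\ref{37} and Proposition~\ref{Hank}), the quotient isomorphism of Lemma~\ref{00}(4), and the injectivity of $\cO_\tht$ on $H_{-1}$; your phrasing via surjectivity of $\cO_\tht|_{\Hctr}$ onto ${\mathcal H}(\de_\al)$ is just a repackaging of the paper's explicit pull-back $y=W_\tht w$ with $\widehat w=\widehat{{\mathcal J}_{\wt\de_\al}}^{-1}(\cO_\tht x)$. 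Your handling of the admissible range of $\al$ and the explicit norm-equivalence computation are also consistent with the paper's argument.
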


\noindent {\it Proof.} It has been shown in Proposition \ref{19}
that $\Hctr\hookrightarrow \Hobs$. To prove the reverse inclusion,
take any $x\in \Hobs$. By Lemma \ref{sabina}, ${\mathcal O}_\theta
x\in {\mathcal H}(\delta_\alpha)
= {\mathcal J}_{\tilde\delta_\alpha} (E( S_\theta^\circ; H))$.
Let consider the quotient map
$$
\widehat{{\mathcal J}_{\tilde\delta_\alpha}}:
{\mathcal Q}(S_\theta^\circ, \delta_\alpha)\to E^2(\CC \backslash
S_\theta;H),
$$
 and  put
$\widehat{w}=
\widehat{{\mathcal J}_{\tilde\delta_\alpha}}^{-1}({\mathcal O}_\theta
x)\in {\mathcal Q}(S_\theta^\circ, \delta_\alpha)$.
Note that $y:=W_\theta w\in  H_{-1}$ by Proposition \ref{prop1}.
Then
$$
{\mathcal O}_\theta y={\mathcal O}_\theta W_\theta w
= -{\mathcal J}_{\tilde\delta_1} w= -{\mathcal J}_{\tilde\delta_\al} w
=-{\mathcal O}_\theta x.
$$
Since $x\in H_{-1}$ the injectivity of $\cO_\tht$ on $H_{-1}$ implies that $x=-y \in \Hctr$.\qed

\section{Proofs of main results}
\label{Proofs main res}

%%%%%%%%%%%%%%%
%
%  FINAL LETTER 4
%
%%%%%%%%%%%%%%%

%
%\noindent {\bf Nota: Asuntos pendientes}
%
%
%(1) ?`El espacio modelo de observaci\'{o}n ${\mathcal H}(\delta_\alpha)$
%depende de $\alpha$? Seg\'{u}n he visto en el art\'{\i}culo [A\&A,
%Proposition 5.1], ${\mathcal H}(\delta_\alpha)= {\mathcal
%H}(\delta_\beta)$ si y s\'{o}lo si existe $\psi\in H^\infty(S_\theta,
% L(H))$ such that
%$$
%\delta_\alpha=\psi\delta_\beta.
%$$
%

%
%(2) En mis notas ten\'{\i}amos conjeturado tambi\'{e}n que el espacio
%${\mathcal H}(\delta_\alpha)$ no depend\'{\i}a del \'{a}ngulo. No s\'{e} si esto
%es as\'{\i}, y por la tanto si hay que probarlo.
%

%\begin{proposition}
%For any $\al$ such that $\al\tht<\frac \pi2$,
%Ker$\,W_\theta=\de_\al E^2(S_\theta, H)$.
%\end{proposition}

%\noindent {\it Proof.}  Let $u\in E^2(S_\theta, H)$. Then $W_\theta
%u=0$ if and only if $P_{out}(\delta^{-1}u)={\mathcal O}_\theta
%W_\theta u=0$  and if and only if $u \in \delta E^2(S_\theta, H).$\qed

%Now we consider the space ${\mathcal H}(\delta_\alpha, S^\circ_\theta)$ given by
%$$
%{\mathcal H}(\delta_\alpha, S^\circ_\theta):=\{v\in E^2(\CC\backslash S_\theta;
%H)\,\, ; \,\, \delta_\alpha v\vert_{ \partial S_\theta}\in
%E^2({S^\circ_\theta}; H)\},$$
%for $\theta\in (\omega, \alpha)$ and $\alpha\in (0, {\pi\over 2\theta})$.

 Theorem \ref{thm log} is not necessarily associated with functional models, so we give here a self-contained proof of it.

\noindent{\it Proof of Theorem \ref{thm log}.}
We are using the Riesz-Dunford calculus. Firstly, we check that
$$
\Vert\Lambda_k(A)^{-r}x\Vert_A\le C\Vert x \Vert, \qquad x\in H, k\in\ZZ\backslash\{0\}.
$$

To do this, take  $\psi \in \Psi(S^\circ_\tht)$.
By applying (\ref{normMC}) to $\Lambda_k(A)^{-r}x$, the Riesz-Dunford formula
for $\psi(tA)\Lambda_k(A)^{-r}x$ and (\ref{sect_cond}), we get
\begin{eqnarray*} &\, &\Vert
\Lambda_k(A)^{-r}x\Vert_A^2\cr&\quad&\le C\Vert x\Vert^2
\sum_{\tau=\pm 1}\int_0^\infty\left(\int_0^\infty\vert \psi(tre^{i\tau
\theta})\vert( 1+ \vert \log({r})\vert)^{-r}
{dr\over r}\right)^2{dt\over t}\cr
  \cr &\quad&
  = C\Vert
    x\Vert^2
    \sum_{\tau=\pm 1}\int_0^\infty\left(\int_0^\infty\vert \psi(se^{i\tau
    \theta})\vert( 1+ \vert \log({s\over t})\vert)^{-r}{ds\over
    s}\right)^2{dt\over t}\cr
    &\quad&\le C\Vert x\Vert^2\sum_{\tau=\pm 1}\left(\int_0^\infty\vert \psi(se^{i\tau \theta})\vert
    \left(\int_0^\infty ( 1+ \vert
    \log(u)\vert)^{-2r}{du\over u}\right)^{1\over 2}{ds\over s}\right)^2
    \end{eqnarray*}
where we have applied
the change of variable $u=s/t$ in the inner integral
and
the Minkowsky inequality
$$
\Big\Vert \int_0^\infty
\vert f(\cdot, s)\vert \,{ds\over s} \;\Big\Vert_2^2\le  \big(\int_0^\infty\Vert
f(\cdot, s) \Vert_2\,{ds\over s}\big)^2.
$$
It follows that
    $$
    \Vert \Lambda_k(A)^{-r}x\Vert_A\le C\sum_{\tau=\pm 1}\Vert
    x\Vert\int_0^\infty\vert \psi(se^{i\tau \theta})\vert {ds\over
    s}=C_1\Vert x\Vert.
    $$

\margp{he cambiado $C$ por $C_1$}
In order to prove the first inequality
in \eqref{Lambda},
note that the adjoint operator $A^{*}$ of $A$ on $H$ is also sectorial of type
$\omega$. Then by applying the preceding estimate to $A^{*}$, we obtain that the operator
$\Lambda_m(A^{*})^{-r}\colon H\to H_{A^{*}}$ is bounded for all $m\in\ZZ\backslash\{0\}$.

Now there is a (natural) duality $(H_A)^{*}=H_{A^{*}}$, see \cite[Theorem 2.1]{AuMcIntNahm},
and the adjoint operator of
$\Lambda_m(A^{*})^{-r}\colon H\to H_{A^{*}}$ is $\Lambda^{*}_m(A)^{-r}\colon H_A\to H$ where
$\Lambda^{*}_m(z):=\overline{\Lambda_m(\overline z)}$, $z\in\CC\setminus(-\infty,0]$.

For every $y\in H_A$, $x\in H$,
\begin{eqnarray*}
  &\quad& \langle(\Lambda_m(A^{*})^{-r})^{*}y,x\rangle_H=\langle y,\Lambda_m(A^{*})^{-r}x\rangle_{(H_{A^{*}})^{*},H_A^{*}}
   \cr \cr
   &\quad&\qquad\qquad:=\int_0^\infty\langle\psi_t(A)y,\psi^{*}_t(A^{*})\Lambda_m(A^{*})^{-r}x\rangle_H{dt\over t}\cr
   &\quad&\qquad\qquad=\int_0^\infty\langle\psi_t^2(A)\Lambda^{*}_m(A)^{-r}y,x\rangle_H{dt\over t}
   =\langle\Lambda^{*}_m(A)^{-r}y,x\rangle.
  \end{eqnarray*}

   \noindent Here we have taken $\psi$ in $\Psi(S_\tht^\circ)$ such that $\psi^2\in\Psi(S_\tht^\circ)$,
   $\int_0^\infty\psi^2(\tau){d\tau\over \tau}=1$. Finally note that $\Lambda^{*}_{-k}(z)=\Lambda_k(z)$ for a fixed $k\in\ZZ$, from which it follows that $\Lambda_k(A)^{-r}\colon H_A\to H$ is bounded as we wanted to show.

To conclude the proof, let us see that $\Lambda_k(A)^{-r}H$ is dense in $H_A$.
Choose a sequence  $(f_n)_n$ such that $\ (\Lambda_k^r f_n)_n\subset \Psi(S^\circ_\theta)$ for $r>1/ 2$, and
$\lim_n f_n(A)x=x$ in $H_A$ for every $x\in H_A$ (for the existence of such a sequence,
see for example \cite[p. 165]{CDMcY} and \cite[p. 170]{McIYa}).
Note that if $x\in H_c$ (see Introduction) then
$\Lambda_k^r(A) f_n(A)x\in H_c$ by (\ref{FC_A}) and
$$
\Lambda_k^{-r}(A)\left(\Lambda_k^r(A)f_n(A)x\right)=f_n(A)x\rightarrow x.
$$
Now it suffices to apply the density of $H_c$ in $H_A$.
\qed

\noindent{\it Proof of  Theorem \ref{main}.} We begin by proving that $\Hobs=H_A$ with
equivalent norms. To see this, let first show that for any $\tht_1,\tht_2\in (\om,\pi]$ there
exists a constant $K=K(\tht_1,\tht_2)>0$ such that
\beqn
\label{roots}
\int_0^\infty \|\,\root\of A\big(re^{i\tht_1}-A\big)^{-1}x\|^2\,dr
\le K
\int_0^\infty \|\,\root\of A\big(re^{i\tht_2}-A\big)^{-1}x\|^2\,dr
\neqn
for all
$x\in H_{-1}$.
Denote by $\Gamma_\phi:=\{re^{i\phi}\,\,; r>0\}$
with $\phi\in (-\pi, \pi]$. Put $\tht=\min(\tht_1,\tht_2)$. The estimate
\begin{multline*}
\| (re^{i\tht_1}-A)^{-1}
\,\root\of A x
\|
\le
\big\|
\frac{re^{i\tht_2}-A}
{re^{i\tht_1}-A}\,
(re^{i\tht_2}-A)^{-1}
\,\root\of A\, x
\big\| \\
\le
\big( C_\tht|e^{i\tht_1}-e^{i\tht_2}|+1 \big)
\| (re^{i\tht_2}-A)^{-1}
\,\root\of A\, x
\|,
\end{multline*}
follows from  (\ref{sect_cond}).
Hence for every two angles $\tht_1, \tht_2\in (\om,\pi]$, the
quantities
$\|{\mathcal O} x\|_{L^2({\Gamma_{\tht_1}})}$ and
$\|{\mathcal O} x\|_{L^2(\Gamma_{\tht_2})}$
are comparable for all $x\in H_{-1}$. This  proves \eqref{roots}.

Now it follows from
\cite[Theorem 2.7]{vanWint71}
that if
${\mathcal O} x|_{\partial S_{\tht_0}}\in L^2\big(\partial S_{\tht_0}\big)$
for
\margp{cambiado}
some $\tht_0\in (\om,\pi)$, then
${\mathcal O} x|_{\CC\setminus  S_\tht}\in
E^2\big(\CC\setminus  S_\tht; H\big)$
for any $\tht\in (\om,\pi)$
(the general case reduces easily to
the case $\pi/2=\tht<\tht_0$).
On the other hand, it is readily seen that
\begin{equation}
\label{norms}
\int_0^\infty \big\| \psi_t(A) x \big\|^2 \, \frac
{dt} t =\| {\mathcal O} x \|^2_{L^2(\Ga_\pi)}
\end{equation}
for all $x\in H_{-1}$ if one chooses
$\psi(z)=\root\of z(1+z)^{-1}\in \Psi(S_\om^\circ)$. Thus $H^{obs}_{A,\tht}=H_A$
with equivalent norms,
for  any $\tht\in (\om,\pi)$.

Also, by  Proposition \ref{122}, $\Hobs=\Hctr$ with equivalent norms. Hence we conclude
that $\Hobs=\Hctr=H_A$ for any $\theta \in (\omega, \pi)$ with
mutually equivalent norms.\qed

\bigskip
\noindent{\it Proof of Theorem \ref{Ctr-mod}.}
Fix any $\alpha$ such that $0<\alpha<\pi/2\tht$.
By Lemma \ref{37}, Proposition \ref{Hank} (1) and
Lemma \ref{00} (4),
$$
\Ker {\mathcal O}_\tht {\mathcal W}_\tht =
\Ker {\mathcal J}_{\wt\de_1} =
\Ker {\mathcal J}_{\wt\de_\al}u
= \de_\al E^2(S^\circ_\tht; H)
$$
for $0<\al<\pi/2\theta$. Further, $\hbox{Ker}\,{\mathcal O}_\theta\, W_\theta
=\hbox{Ker}\, W_\theta$ since ${\mathcal O}_\theta$ is injective and so (i) is proved.

Part (ii) is a consequence of Proposition \ref{inclusions} (1) and Theorem \ref{main}.

Note that
the control operator $W_\theta$ intertwines the resolvent of
the operator $A$ with the resolvent of the model
operator $M^T_z$:
$$
(A-\lambda)^{-1}
W_\tht=W_\theta\,M^T_ {(z-\lambda)^{-1}},
\quad \la\in\CC\backslash S_\tht,
$$
 see \cite[formula (5.1)]{YaPa} and for any rational scalar function $q\in H^\infty(S_\theta^\circ)$, we have
$$
W_\theta(qf)=q(A)W_\theta(f), \qquad f \in E^2(S_\theta^\circ;H).
$$
In particular $M_z\big(D(M_z)\cap(\Ker W_\theta )\big)\subset \Ker W_\theta$,
and therefore $\wh M_z$ is well-defined.  Then the operator
 $\wh M_z$ is closed, densely defined and
 $$
 \widetilde A\,\wh W_\theta= \wh W_\theta\wh M_z,
 $$
see a similar proof in \cite[Theorem 5.6]{YaPa}. This shows part
(iii) and the proof is concluded. \qed

\bigskip
\noindent{\it Proof of  Theorem \ref{Obs-mod}}. Since $\Hobs=H_A$ then $H_A\hookrightarrow H_{-1}$
by Proposition \ref{inclusions}. Then ${\mathcal O}_\theta\vert_{H_A} $ is one-to-one and we have
${\mathcal O}_\theta\vert_{H_A} :H_A\hookrightarrow {\mathcal H}(\delta_\alpha)$.
Take $v \in  {\mathcal H}(\delta_\alpha)$.
By  Lemma \ref{00} (4) there exists $u\in  E^2(S^\circ_\theta; H) $ such that
${\mathcal J}_{\widetilde \delta_\alpha}(u)=-v$ with $0<\alpha<\pi/2\tht$.
Set $x=W_\theta(u) \in H_A$. By Lemmas \ref{37} and \ref{Hank} (1), we obtain that
$$
{\mathcal O}_\theta\vert_{H_A}(x)=
{\mathcal O}_\theta(W_\theta(u))=-{\mathcal J}_{\widetilde \delta_1}(u)=
-{\mathcal J}_{\widetilde \delta_\alpha}(u)=v,
$$
and we conclude that ${\mathcal O}_\theta\vert_{H_A}$ is an isomorphism.

Now we apply the Hilbert identity and the equality (\ref{resolv}) to show  that
$$
{\mathcal O}_\theta((\lambda-\widetilde A)^{-1} x)=(\lambda-M_z^T)^{-1}{\mathcal O}_\theta(x), \qquad  x\in H_A,
$$
for $\lambda \not \in \sigma( \widetilde A) $ whence
$$
\qquad \qquad \qquad \qquad \qquad
{\mathcal O}_\theta \wt Ax=M_z^T {\mathcal O}_\theta x, \qquad x\in D(\wt A).
\hfill \;\qquad \qquad
\qed
$$

\noindent{\it Proof of  Theorem \ref{eqiv conds}.}
The
equivalence of (d), (g) and (i) is a direct consequence of
Theorem \ref{Ctr-mod} (ii) and  Theorem \ref{main}. The
equivalence of (d), (h) and (j) is from Theorem \ref{Obs-mod}.
\qed

\section{Comments and final remarks}
\label{Sec_final}

%Here we do not pursue the goal of making
%our exposition independent of previous work on
%$\Hnty$ calculus for operators of type $\om$.
%However, it is worth noticing that our approach
%can substitute some delicate arguments related with complete
%boundedness. \textbf{???}

\noindent  {\bf a.} {\it An alternative proof of  Theorem \ref{thm log}}. The functional model
allows us to give the following argument: To prove the inclusion
and the second inequality, note that the function $\varphi_{r,x}$ given by
$$
\varphi_{r, x}(z):= \frac {\Lambda_k(z)^{-r}} {\root\of z}x, \qquad x\in H, \quad k\in \ZZ\backslash\{0\},
$$
belongs to $E^2(S_\theta^\circ; H)$ for all $\theta>\om$ and all $r>\frac 12$. Moreover,  for any $x\in H$,
\begin{equation}
\label{W1}
W_\theta (\varphi_{r, x})
=\Lambda_0(A)^{-r}x.
\end{equation}
To show this, observe that both parts depend continuously on $x\in
H$. So it suffices to check \eqref{W1} for vectors $x$ of the
form $x=T_Ax_1$ with $x_1\in H$. For these vectors, the equality follows
from the $\Psi(S^\circ_\theta)$-functional calculus. Now take any
element $h\in H$ of the form $h=\Lambda_k(A)^{-r}x$, $x\in H$.
By \eqref{W1} and
\margp{Proposition}
Proposition \ref{122},
we have that
$h\in \Hobs=\Hctr$ and, by  Theorem \ref{main}, that $h\in H_A$. Then by
Proposition \ref{inclusions} (1) we obtain the inequality
$$
\Vert \Lambda_k(A)^{-r}x\Vert_{A}
\le C\Vert W_\theta(\varphi_{r, x})\Vert_{A,\tht,ctr}\le C'\Vert \varphi_{r,x}\Vert_{E^2(S^\circ_\theta; H)}\le C_r \Vert x\Vert.
$$

The rest of the proof follows the same lines as the proof of Theorem
\ref{thm log}, which was given in Section \ref{Proofs main res}.
\qed

\medskip
\noindent  {\bf b.} {\it Admissibility}.
\margp{Cambiado}
Let us assume that $-A$ is the infinitesimal generator of a
bounded $C_0$-semigroup $(e^{-tA})_{t>0}$ on $H$. Let $C$ be  an observation
operator $C:\mathcal{D}(A)\to Y$, for some Hilbert space $Y$, which is continuous with respect to the graph norm
of $\mathcal{D}(A)$. Then $C$ is called admissible if it satisfies the estimate
\begin{equation}
\label{obs inq}
\int_0^\infty \|Ce^{-tA}x\|^2\, dt\le K\|x\|^2, \qquad x\in \mathcal{D}(A),
\end{equation}
for some positive constant $K$. Admissible (and exact) observation operators are important
in linear Control Theory, in particular in the
linear quadratic optimization problem, see
\cite{PartPott, Staffans, Mikkola} and references therein.

In \cite{Lemerdy}, see also \cite[p. 204]{leMeW}, admissible
operators have been studied in terms of the admissibility of the
operator $\sqrt{A}$ (in this case $Y=H$), for bounded analytic
semigroups $(e^{-tA})_{t>0}$ or equivalently when $A$ is sectorial
of type $\omega<\pi/2$ (see \cite[Proposition 2.2]{leMeW}). In
particular $\sqrt{A}$ is admissible
\margp{if $A$ has}
if $A$ has
a $H^\infty$
functional calculus. Here we obtain the following corollary of
Theorem \ref{thm log}.

\begin{corollary}\label{adm log}
Let $A$ be a sectorial operator  such that $-A$ generates a
$C_0$-semigroup $(e^{-tA})_{t>0}$. Then the operator
$C:=\Lambda_k(A)^{-r}\sqrt A$ is admissible for $A$ whenever
$r>1/2$ and for all $k\in \ZZ\backslash\{0\}$.
\end{corollary}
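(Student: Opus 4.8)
\noindent{\it Proof of Corollary \ref{adm log}.}
The plan is to realize the square function norm $\|\cdot\|_A$ through the semigroup itself and then to invoke Theorem \ref{thm log}. Following the discussion preceding the statement, we work with bounded analytic semigroups, i.e.\ with $A$ of type $\om<\pi/2$; then $\psi(z):=\sqrt z\,e^{-z}$ lies in $\Psi(S^\circ_\tht)$ for every $\tht\in(\om,\pi/2)$, and $\psi_t(z)=\psi(tz)=\sqrt{tz}\,e^{-tz}$, so that $\psi_t(A)=\sqrt t\,\sqrt A\,e^{-tA}$. Substituting this in \eqref{normMC} and using that different generating functions in $\Psi(S^\circ_\tht)$ produce equivalent norms \cite{leMeW}, I would first record the identity
$$
\int_0^\infty\|\sqrt A\,e^{-tA}y\|^2\,dt\asymp\|y\|_A^2,\qquad y\in H,
$$
both sides being finite simultaneously.

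Next I would fix $x\in D(A)$. Since $\Lambda_k(A)^{-r}$ is bounded on $H$ (see the remark following Theorem \ref{thm log}) and commutes with $\sqrt A$ and with $e^{-tA}$ on the relevant domains, and since $e^{-tA}x\in D(A)\subset D(\sqrt A)$ for $t>0$, one obtains
$$
Ce^{-tA}x=\Lambda_k(A)^{-r}\sqrt A\,e^{-tA}x=\sqrt A\,e^{-tA}\bigl(\Lambda_k(A)^{-r}x\bigr),\qquad t>0.
$$
Putting $y:=\Lambda_k(A)^{-r}x$ and combining the displayed identity with the bound $\|\Lambda_k(A)^{-r}x\|_A\le C_{k,r}\|x\|$ of Theorem \ref{thm log} (valid since $r>1/2$ and $k\ne0$) yields
$$
\int_0^\infty\|Ce^{-tA}x\|^2\,dt=\int_0^\infty\|\sqrt A\,e^{-tA}y\|^2\,dt\le C\,\|y\|_A^2\le C\,C_{k,r}^2\,\|x\|^2,
$$
which is precisely the admissibility inequality \eqref{obs inq}.

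It remains to check the routine facts used above: that $C=\Lambda_k(A)^{-r}\sqrt A$ is a genuine observation operator, that is, continuous from $D(A)$ with its graph norm into $H$ (this follows from boundedness of $\Lambda_k(A)^{-r}$ together with $\|\sqrt A\,x\|\le\|\sqrt A(1+A)^{-1}\|\,(\|x\|+\|Ax\|)$), and that the commutations of unbounded operators above are legitimate on $D(A)$. I expect the only conceptually substantive point to be the passage from $\sqrt A$ to the regularized operator $\Lambda_k(A)^{-r}\sqrt A$: whereas $\sqrt A$ itself need not be admissible — its admissibility amounts exactly to the square function estimate $\|x\|_A\le K\|x\|$, not valid for a general sectorial operator — composition with $\Lambda_k(A)^{-r}$ sends $H$ boundedly into $H_A$ by Theorem \ref{thm log}, and this is what restores admissibility. \qed
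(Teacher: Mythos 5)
Your proof is correct and follows essentially the same route as the paper's: write $Ce^{-tA}x=\sqrt A\,e^{-tA}\Lambda_k(A)^{-r}x$, recognize $\int_0^\infty\|\sqrt A\,e^{-tA}y\|^2\,dt$ as the square-function norm $\|y\|_A^2$ (via $\psi(z)=\sqrt z\,e^{-z}$), and apply the bound $\|\Lambda_k(A)^{-r}x\|_A\le C_{k,r}\|x\|$ from Theorem \ref{thm log}. Your explicit remarks that $\om<\pi/2$ is needed for $\sqrt z\,e^{-z}\in\Psi(S^\circ_\tht)$ and that $C$ is continuous for the graph norm are sound details that the paper leaves implicit.
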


\begin{proof} This is easy. For every $x\in\mathcal D(A)$,
$$
\int_0^\infty \|Ce^{-tA}x\|^2\, dt=\int_0^\infty \|\sqrt{tA}\ e^{-tA}\Lambda_k(A)^{-r}x\|^2\, dt
$$
$$
=\|\Lambda_k(A)^{-r}x\|_A^2\le K\|x\|^2
$$
by Theorem \ref{thm log}.
\end{proof}

\bigskip

% \medskip
\noindent  {\bf c. }{\it On $\om$-accretive operators}.
We recall that a closed operator
$T$ on a Hilbert space $H$ is called
$\om$-accretive if its numerical range
$\big\{\langle Tx,x\rangle; x\in H, \|x\|\le 1\big\}$
is contained in the closed sector $S_\om$.
\begin{proposition} % 2.3
\label{Prop:spectr}
For any $x\in H_A$, consider $Ax$ as an element
of $H_{-3}$. Define a (possibly unbounded) operator $\wt A$ on
$H_A$ by
$$
D(\wt A):=\{x\in H_A\,\, ; \,\, Ax\in H_A\},
$$
and $\wt Ax=Ax, $  $x\in D(\wt A)$. Then the following holds.
\begin{enumerate}

\item[(a)] $\wt A$ is similar to an $\omega$-accretive
operator;

\item[(b)]
$\wt A$ has trivial kernel, and $\sigma(A)=\sigma(\tilde A)$.
\end{enumerate}
\end{proposition}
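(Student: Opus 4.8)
The plan is to obtain (a) from the contractive $H^\infty$‑calculus that $\wt A$ automatically carries on $H_A$ --- precisely the estimate \eqref{FC_A} --- and to obtain (b) partly by hand and partly from the observation model of Theorem~\ref{Obs-mod}. Throughout I use Theorem~\ref{main}, so that $H_A=\Hobs$ with equivalent norms and ${\mathcal O}_\theta|_{H_A}$ transports $\wt A$ onto $M_z^T$ on ${\mathcal H}(\delta_\alpha)$. For (a) I restrict attention to $\omega<\pi/2$, the natural range for accretivity (compare conditions (e)--(f) of Theorem~FC); for $\omega\ge\pi/2$ assertion (a) reduces to the sectoriality of $\wt A$ already recorded before Theorem~\ref{Ctr-mod}.

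For (a), fix $\theta\in(\omega,\pi/2)$ and $\psi\in\RR$ with $|\psi|<\pi/2-\theta$, so that $e^{i\psi}z$ lies in the open right half‑plane for every $z\in S_\theta^\circ$. For $\Re\lambda>0$ the function $z\mapsto(e^{i\psi}z-\lambda)(e^{i\psi}z+\bar\lambda)^{-1}$ is holomorphic on $S_\theta^\circ$ with sup‑norm at most $1$, being the pull‑back of a Blaschke factor of the right half‑plane. By \eqref{FC_A} the operator $(e^{i\psi}\wt A-\lambda)(e^{i\psi}\wt A+\bar\lambda)^{-1}$ is then a contraction of $H_A$; writing $\|(e^{i\psi}\wt A-\lambda)x\|_A\le\|(e^{i\psi}\wt A+\bar\lambda)x\|_A$ for $x\in D(\wt A)$ and expanding the squares yields $\Re\langle e^{i\psi}\wt A x,x\rangle_A\ge0$. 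Letting $\psi$ range over $(-(\pi/2-\theta),\pi/2-\theta)$ and then $\theta\downarrow\omega$ one gets $\Re\bigl(e^{i\psi}\langle\wt A x,x\rangle_A\bigr)\ge0$ for all $|\psi|\le\pi/2-\omega$, which says exactly that $\langle\wt A x,x\rangle_A\in S_\omega$. Hence $\wt A$ is $\omega$‑accretive for the scalar product of $H_A$, a fortiori similar to an $\omega$‑accretive operator.

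For the elementary part of (b): if $\lambda\in\rho(A)$, then $(\lambda-A)^{-1}\in L(H)$ commutes with every $\psi_t(A)$, so $\|(\lambda-A)^{-1}x\|_A\le\|(\lambda-A)^{-1}\|_{L(H)}\|x\|_A$ on $H_c$; thus $(\lambda-A)^{-1}$ extends to some $R_\lambda\in L(H_A)$, and since $\wt A$ is closed and $A(\lambda-A)^{-1}=\lambda(\lambda-A)^{-1}-I$ one checks $R_\lambda=(\lambda-\wt A)^{-1}$. Therefore $\rho(A)\subseteq\rho(\wt A)$, i.e. $\sigma(\wt A)\subseteq\sigma(A)$, and $\wt A$ inherits the resolvent bounds of $A$ off $S_\theta$, so it is sectorial of type $\le\omega$. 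For the kernel, let $x\in D(\wt A)$ with $\wt A x=0$. Then $Ax=0$, and the resolvent identity gives $({\mathcal O}_\theta x)(z)=\sqrt{A}(z-A)^{-1}x=z^{-1}\sqrt{A}x$ for $z\in\rho(A)$; since $x\in H_A=\Hobs$ forces ${\mathcal O}_\theta x\in E^2(\CC\setminus S_\theta;H)$, while $z\mapsto z^{-1}$ is not square‑integrable over $\partial S_\theta$ near the origin, we must have $\sqrt{A}x=0$, whence $x=0$ by injectivity of $\sqrt{A}$.

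The reverse inclusion $\sigma(A)\subseteq\sigma(\wt A)$ is the main obstacle. By Theorem~\ref{Obs-mod}, $\sigma(\wt A)=\sigma(M_z^T)$, and since $\sigma(A)\subseteq S_\omega\subseteq S_\theta^\circ$ it suffices to place every $\lambda\in\sigma(A)\cap S_\theta^\circ$ in $\sigma(M_z^T)$. If $\lambda$ is an eigenvalue of $A$, choose $c\ne0$ with $(A^\alpha-\lambda^\alpha)c=0$, i.e. $\delta_\alpha(\lambda)c=0$; then $w\mapsto\delta_\alpha(w)\,c\,(w-\lambda)^{-1}$ has a removable singularity at $\lambda$, so $f(w):=c\,(w-\lambda)^{-1}$ belongs to ${\mathcal H}(\delta_\alpha)$ and $(M_z^T-\lambda)f=0$, giving $\lambda\in\sigma(M_z^T)$. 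For a general $\lambda\in\sigma(A)$ one replaces $c$ by approximate eigenvectors of $A^\alpha$ at $\lambda^\alpha$ (using the adjoint when $\lambda$ is only in the residual spectrum) and checks that the associated $f_n$ are approximate eigenvectors of $M_z^T$ in ${\mathcal H}(\delta_\alpha)$; equivalently, one shows that $\lambda\in\rho(\wt A)$ together with the divided‑difference form \eqref{resolv} of the resolvent forces $\delta_\alpha(\lambda)$ to be boundedly invertible, hence $\lambda^\alpha\in\rho(A^\alpha)$ and $\lambda\in\rho(A)$. Combining both inclusions yields $\sigma(A)=\sigma(\wt A)$. The delicate point, where the real work lies, is the uniform control of the $E^2$‑ and ${\mathcal H}(\delta_\alpha)$‑norms across $\partial S_\theta$ in this approximate‑eigenvector estimate.
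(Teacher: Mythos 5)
Your argument for the kernel of $\wt A$ and for the inclusion $\sigma(\wt A)\subseteq\sigma(A)$ (commuting resolvents, contractive calculus on $H_A$) is sound, and the Blaschke-factor computation giving accretivity of $\wt A$ with respect to the $H_A$ inner product is a nice alternative to the paper's route (the paper gets (a) by observing that $\wt A$ is similar to the compressed multiplication $\wh M_z$, resp.\ by citing Haase, Theorem 7.3.9). But the proposal has two genuine gaps. First, the inclusion $\sigma(A)\subseteq\sigma(\wt A)$ — which you yourself flag as the place ``where the real work lies'' — is not actually proved: the approximate-eigenvector estimates in $E^2(\CC\setminus S_\theta;H)$ and ${\mathcal H}(\de_\al)$ are only announced, and the ``equivalent'' reformulation you offer is flawed at the point $\la=0$. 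Pointwise invertibility of $\de_\al(\la)$ cannot detect $0\in\sigma(A)$, since $\de_\al(0)=\al^{-1}I$ is always invertible while $0\in\sigma(A)$ is the typical situation for a sectorial operator; likewise your model eigenfunction $f(w)=c\,(w-\la)^{-1}$ fails to lie in $E^2(\CC\setminus S_\theta;H)$ when $\la=0$ (this is exactly the non-square-integrability of $w^{-1}$ near the origin that you used in the kernel argument). This is precisely why the paper works with the \emph{locally uniform} invertibility of $\de_1$ (the set $\spec\de_\al$, identified with $\sigma(\wh M_z)$ via Sz.-Nagy--Foia\c{s}/\cite{Yak}) and then devotes a separate argument to $\la=0$: uniform invertibility of $\de_1$ near $0$ yields a first-order resolvent bound $\Vert(z-A)^{-1}\Vert\le C|z|^{-1}$ near $0$, so $0$ is either in $\rho(A)$ or an isolated eigenvalue, and the latter is excluded by the injectivity of $A$. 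Without this (or an equally careful substitute), the equality $\sigma(A)=\sigma(\wt A)$ is not established.

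Second, in part (a) your reduction for $\om\ge\pi/2$ — ``assertion (a) reduces to the sectoriality of $\wt A$'' — is not valid: with the paper's definition, $\om$-accretivity is a numerical-range condition, and sectoriality of type $\om$ does not imply (similarity to) numerical range in $S_\om$; indeed, the equivalence of such a similarity with the bounded $H^\infty$ calculus is exactly the nontrivial content behind Theorem FC (e)--(f) and Haase 7.3.9. Your half-plane Blaschke trick only works when $S_\theta^\circ$ sits inside a half-plane, i.e.\ $\theta<\pi/2$; for $\om\ge\pi/2$ you would need a genuine replacement (e.g.\ contractive test functions built from $z^\be$ with $\be\theta<\pi/2$, or the paper's route through the model operator $\wh M_z$, or the reduction to $A^{1/2}$ that the paper uses in part (b)). As written, both the case $\om\ge\pi/2$ of (a) and the inclusion $\sigma(A)\subseteq\sigma(\wt A)$ in (b) remain unproved.
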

% The following result is known, see  for example \cite{leMe3} or  \cite{Simard1999}.

We notice that  (a) follows from \cite[Theorem 7.3.9]{Haase}.
We will see that this fact also follows immediately from our
main results.

\begin{proof}

Fix some angle $\tht\in(\om,\pi)$ and some $\al$
as in Theorem \ref{Ctr-mod}.
The spectrum $\spec \de_\al$ of the $L(H)$-valued
analytic function
$\de_\al$ on $S^\circ_\tht$ is defined as the set of all points
$\la\in S_\tht$
such that $\de_\al^{-1}\notin H^\infty\big(S^\circ_\tht\cap \cW\big)$
for any neighborhood $\cW$
of $\la$.

By Theorem \ref{Ctr-mod}, $\wt A$ is similar to the
quotient multiplication operator $\wh M_z$ on
${\mathcal Q}(S_\theta^{\circ}, \de_\al)$.
It is immediate that this operator is
$\om$-accretive, which gives (a).

The kernel of $\wh M_z$  is zero.
Indeed, if $\wh M_z \rho=0$, $\rho=[r]$,
$r\in E^2(S_\theta^{\circ}, H)$,
then
$zr(z)=\de_\al(z) h(z)$ for some $h\in E^2(S_\theta^{\circ}, H)$.
It follows from the properties of $\de_\al$ that
$h(z)=zh_1(z)$, $h_1\in E^2(S_\theta^{\circ}, H)$,
and therefore $\rho=[\de_\al\cdot h_1]=0$.
(One can also make use of the fact that the model operator like the one considered here
is an analogue of a completely nonunitary contraction in the Nagy--Foia\c{s}
theory. Hence it cannot have a point spectrum on
$\partial S_\theta$.)

It also follows from well-known results that the spectrum of $\wh M_z$
coincides with the set $\spec\de_\alpha$, see
\cite[VI.4.1]{SzNF} for the case of the disc
(which transplants easily to any simply connected Jordan domain)
or \cite[Proposition 2.3]{Yak}.
So it only remains to prove that $\spec \de_\al=\sigma(A)$.
Let us assume first that $\om<\pi/2$, then we can put
$\al=1$ and take $\tht<\pi/2$.

Let $\la\in S^\circ_\tht$, $\la\ne0$.
Then
the following properties are equivalent:
(i) $\la\notin\sigma(A)$;
(ii) $\de_1(\la)$ is invertible;
(iii) $\de_1(z)$ is invertible, with a uniform estimate on the norm of the
inverse, for $z$ in some neighborhood of $\la$.
It follows that
$$
\sigma(A)\setminus \{0\}
=
\spec \de_1\setminus \{0\}.
$$
Now let us consider the remaining case when $\la=0$.
If $0\notin\sigma(A)$, then obviously,
$\de_1^{-1}$ exists and is uniformly bounded in a neighborhood
of the origin.

Conversely, suppose that
$0\notin\spec \de_1$, and let us show that $0\notin\sigma(A)$.
It follows from the assumption that $(A-z)\Phi(z)=A+z$,
for $z$ in a neighborhood $\cW$ of $0$, where
$\Phi, \Phi^{-1}$ are functions in $H^\infty(\cW,L(H))$.
Moreover, $\Phi(z) h\in \mathcal{D}(A)$ for all $z\in \cW$,
$h\in H$. It follows that $(A-z)^{-1}=\Phi(z)(A+z)^{-1}$ for
$z\in S^\circ_\tht\cap \cW$. Now \eqref{sect_cond} implies
the estimate
$\Vert (z-A)^{-1}\Vert \le C_1\vert z\vert^{-1}$
for all  $z\in\cW$, $z\ne0$.
This first order estimate of the resolvent implies that
$0$ either is in the resolvent set
of $A$ or
is its isolated eigenvalue. The latter contradicts the assumed injectivity  of $A$ (see Introduction) .

This finishes the proof of  the equality
$\sigma(A)=\sigma(\wt A)$ for the case when $\om<\pi/2$.

In the remaining case when $\om\in[\pi/2, \pi)$,
by applying what has been proved already and the results of \cite[Section 3.1]{Haase},
it is easy to prove that
$$
\si(A)=
\big\{
z^2;\; z\in \si(A^{1/2})
\big\}
=
\big\{
z^2;\; z\in \si(\wt A^{\,1/2})
\big\}
=\si(\wt A).
$$
This gives the general case.
\end{proof}

%\subsection{$H^\infty$-functional calculus in half-planes}.

The idea of the following result is that the existence of the $H^\infty$-functional calculus
in two half-planes implies automatically the existence of the $H^\infty$-functional calculus
in their intersection, if one applies the results by Havin,
Nersessian and Ortega-Cerd\'a \cite{Hav-Nersessian, Hav-Ners-Cerda}.

We refer to \cite[Chapter 4, \S 4]{SzNF} for the definition of  purely dissipative operators.

\begin{proposition}
\label{Hav:Ners} Let $A$ be the generator of an analytic semigroup
and assume that $A$ is similar to an $\omega$-accretive operator.
Suppose, moreover, that the operators $\pm e^{\pm i\omega}A$ are
similar to purely dissipative operators. Then $A$ admits an
$H^\infty$-functional calculus in $S_\omega^\circ$.
\end{proposition}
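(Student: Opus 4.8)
\noindent{\it Proof of Proposition \ref{Hav:Ners}.}
The plan is to realize $S^\circ_\om$ as the intersection of two open half-planes, to obtain from the hypotheses a bounded $H^\infty$-functional calculus for $A$ on \emph{each} of these half-planes, and then to glue the two calculi together by a bounded separation of singularities. We may assume $\om<\pi/2$, since for $\om=\pi/2$ the set $S^\circ_\om$ is itself a half-plane and the conclusion is already contained in the hypothesis. As $A$ is similar to an $\om$-accretive operator, $\si(A)\subset S_\om$. Put
$$
\Pi_1:=\{z\ne 0:\ \om-\pi<\arg z<\om\},\qquad \Pi_2:=\{z\ne 0:\ -\om<\arg z<\pi-\om\},
$$
the two open half-planes whose boundaries carry the two rays of $\partial S_\om$. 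Then $\Pi_1\cap\Pi_2=S^\circ_\om$, while $\Pi_1\cup\Pi_2=S^\circ_{\pi-\om}$ is a reflex sector of half-angle $\pi-\om>\pi/2$ that still avoids $(-\infty,0]$; note $\Pi_1=e^{i(\om-\pi)}\{\Im z>0\}$, $\Pi_2=e^{-i\om}\{\Im z>0\}$, and $-1\notin\overline{\Pi_1}\cup\overline{\Pi_2}$.

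First I would read off the half-plane calculi. A purely dissipative operator is unitarily equivalent to multiplication by the independent variable on a half-plane model space $H^2(\{\Im z>0\};E)\ominus\Theta H^2(\{\Im z>0\};F)$ --- the half-plane incarnation of the Nagy--Foia\c{s} model, see \cite[Chapter~4, \S4]{SzNF} --- and compressing multiplication by a symbol $f\in H^\infty(\{\Im z>0\})$ to this space gives a contractive algebra homomorphism, i.e.\ a bounded $H^\infty$-functional calculus on that half-plane. Since $\si(A)\subset S_\om$, the rotated operators figuring in the hypothesis ($e^{i\om}A$ and $-e^{-i\om}A=e^{i(\pi-\om)}A$ being the two relevant ones among $\pm e^{\pm i\om}A$) have their spectra in a closed half-plane; transporting their models through the stated similarities and undoing the rotation, we obtain that $A$ admits bounded $H^\infty(\Pi_1)$- and $H^\infty(\Pi_2)$-functional calculi, with norms $M_1$, $M_2$ say. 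By uniqueness of such continuations (the references quoted in the Introduction, e.g.\ \cite{Che_Gale, Uiterd}, or the argument of \cite{McInt1}), both agree with the Dunford--Riesz calculus of $A$ on every function in their common domain; in particular on $\varphi(z):=z(1+z)^{-2}$, which lies in $\Psi(S^\circ_\mu)$ for every $\mu<\pi$ and --- since $-1\notin\overline{\Pi_j}$ --- also in $H^\infty(\Pi_j)$, and on all products $f_j\varphi$ with $f_j\in H^\infty(\Pi_j)$.

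The key external ingredient, which I would invoke next, is the bounded \emph{separation of singularities} for an intersection of two half-planes meeting at a positive angle, due to Havin, Nersessian and Ortega-Cerd\'a \cite{Hav-Nersessian, Hav-Ners-Cerda}: there is a constant $C=C(\om)$ such that every $f\in H^\infty(S^\circ_\om)=H^\infty(\Pi_1\cap\Pi_2)$ can be written $f=f_1+f_2$ with $f_j\in H^\infty(\Pi_j)$ and $\|f_1\|_{\infty,\Pi_1}+\|f_2\|_{\infty,\Pi_2}\le C\|f\|_{\infty,S^\circ_\om}$. Given $f$ and such a splitting, one has $f_j\varphi\in H^\infty(\Pi_j)\cap\Psi(S^\circ_\om)$, so multiplicativity and consistency of the $\Pi_j$-calculus give $(f_j\varphi)(A)=f_j(A)\varphi(A)=\varphi(A)f_j(A)$, where the left-hand side is equally the value of the Dunford--Riesz calculus. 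Since $f\varphi\in\Psi(S^\circ_\om)$ too, linearity of the Dunford--Riesz calculus yields $(f\varphi)(A)=(f_1\varphi)(A)+(f_2\varphi)(A)=\varphi(A)\big(f_1(A)+f_2(A)\big)$; as $\varphi(A)$ is injective with dense range, the operator $f(A)$ --- defined canonically by $f(A):=\varphi(A)^{-1}(f\varphi)(A)$ --- equals $f_1(A)+f_2(A)$, hence is bounded, with $\|f(A)\|\le \max(M_1,M_2)\,C\,\|f\|_{\infty,S^\circ_\om}$. Thus $f(A)\in L(H)$ for every $f\in H^\infty(S^\circ_\om)$, which by \cite[Proposition~5.3.4]{Haase} is exactly the assertion that $A$ admits a bounded $H^\infty(S^\circ_\om)$-functional calculus; multiplicativity then comes for free, so the splitting need not respect products.

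The main obstacle is the reliance on the \emph{bounded} (norm-controlled) form of the separation of singularities: the bare identity $H^\infty(\Pi_1\cap\Pi_2)=H^\infty(\Pi_1)+H^\infty(\Pi_2)$ would not suffice, one genuinely needs the uniform constant $C(\om)$, and this is where the geometry enters --- it is available precisely because the two half-planes meet at the strictly positive opening $2\om\in(0,\pi)$, and fails in degenerate (zero-angle) configurations. The remaining work is bookkeeping: matching the operators $\pm e^{\pm i\om}A$ with the correct half-planes, and checking the mutual consistency of the two half-plane calculi with the sectorial Dunford--Riesz calculus on the overlapping function classes ($\varphi$, the products $f_j\varphi$, and rational functions regular off $S_\om$), which is routine given the uniqueness results cited above.
\qed
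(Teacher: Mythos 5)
Your proof is correct and follows essentially the same route as the paper: write $S^\circ_\omega=\Pi_1\cap\Pi_2$, use the similarity to purely dissipative operators to get Nagy--Foia\c{s} half-plane models (hence bounded $H^\infty(\Pi_j)$ calculi), and apply the Havin--Nersessian--Ortega-Cerd\'a bounded separation of singularities to split $f=f_1+f_2$ and estimate $\|f(A)\|\le\|f_1(A)\|+\|f_2(A)\|$. The only difference is that you spell out the consistency of the half-plane calculi with the regularized Dunford--Riesz calculus via $\varphi(z)=z(1+z)^{-2}$, a step the paper leaves implicit.
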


\begin{proof}
Let $\Pi_1$ and $\Pi_2$ be open half-planes such that
$\Pi_1\cap\Pi_2=S^\circ_\omega$ and let $f\in H^\infty(S^\circ_\omega)$.
It follows from \cite[Example 4.1]{Hav-Nersessian} that there is a
constant $C$, depending only on $\omega$ and functions $f_j\in
H^\infty(\Pi_j)$, $j=1,2$, such that
$$
f=f_1+f_2, \qquad \|f_j\|_{H^\infty(\Pi_j)}\le
C\|f\|_{H^\infty(S^\circ_\omega)}.
$$
By the assumption, $A$ has Nagy-Foia\c{s} models in $\Pi_j$. Hence
we can write
\begin{multline*}
\|f(A)\|\le \|f_1(A)\|+\|f_2(A)\|
\\
\le\|f_1\|_{H^\infty(\Pi_1)}+\|f_2\|_{H^\infty(\Pi_2)} \le
2C\,\|f\|_{H^\infty(S^\circ_\omega)},
\end{multline*}
as we wanted to show.
\end{proof}

\noindent  {\bf d. }{\it Duality in the models of Nagy-Foia\c{s} type}.
Note that $A^*$ is also a sectorial operator of type $\om$, and that $A$ admits an $H^\infty$ calculus
iff $A^*$ does, see \cite{AuMcIntNahm}. This is how \textit{four} functional models appear:
the observation and the control models of $A$ and
the observation and the control models of $A^*$.
By \eqref{delta-al}, the characteristic function
$\de_{\al,A^*}$ of $A^*$ is related with
the characteristic function $\de_{\al,A}$ of $A$ via the formula
$$
\de_{\al,A^*}(z)=
\big(\de_{\al,A}(\bar z)\big)^*.
$$

It turns out that there is a certain natural Cauchy duality between
the functional models of $A$ and the functional models of $A^*$.
This point was explained in detail in \cite{Yak}. The Cauchy pairing between the observation model
spaces ${\mathcal H}({\delta}_{\alpha,A})$ and ${\mathcal H}({\delta}_{\alpha,A^*})$
is given by
$$
\langle v, u \rangle_{\delta_\alpha}:={1\over 2\pi
i}\int_{\partial S_\theta} \langle \delta_\alpha(z)v(z),
u(\overline{z})\rangle \,dz, \qquad v\in  {\mathcal
H}({\delta}_\alpha), \,  u\in {\mathcal H}(\tilde{\delta}_\alpha).
$$
The following duality formula holds:
$$
\langle x,y \rangle_{H_A}=
\langle \cO x,\cO_* y \rangle, \quad  x\in H_A,\,y \in H_{A^*},
$$
where $\cO $ is given by \eqref{def_cO} and
$(\cO_* y)(z)=\sqrt{A^*} (z-A^*)^{-1}y$,  $z\in\rho(A^*)$.
See \cite{Yak}, formula (0.1), Proposition 4.2 and \S9.

\

We finish with the following remark.
Let $\tht$ be fixed, and take any $\al, \be \in (0,\frac \pi {2 \tht})$.
Theorem \ref{Obs-mod} implies that
$$
{\mathcal H}(\de_\al)={\mathcal H}(\de_\be).
$$

In general, suppose that
$\De,\De_1\in H^\infty\big(S^\circ_\tht,L(H)\big)$ are
\textit{two-sided admissible functions} (see \cite[\S2]{Yak}). Then, by
\cite[Proposition 11.2]{Yak}, ${\mathcal H}(\De)$=${\mathcal H}(\De_1)$
if and only if there exists an operator-valued function
$\psi$ on $S_\theta^\circ$ with
$\psi,\psi^{-1}\in H^\infty(S_\theta^\circ; L (H))$ such that
$
\De=\psi\De_1.
$
In our situation, the existence of a function $\psi$
such that
$
\delta_\alpha=\psi\delta_\beta
$
can be checked directly.

%%%%%%%%%%%%%%%%%%%%%%%%%%%%%%%%%%%%%%%%%%%%%%%%%%%%%%%%%%%%%%%%%%%%%%
%\bibliographystyle{plain}
% \bibliography{yaku}

\begin{thebibliography}{10}

\bibitem{Arli}
Yury Arlinskii.
\newblock Characteristic functions of maximal sectorial operators.
\newblock In {\em Recent advances in operator theory ({G}roningen, 1998)},
  volume 124 of {\em Oper. Theory Adv. Appl.}, pages 89--108. Birkh\"auser,
  Basel, 2001.

\bibitem{AuMcIntNahm}
Pascal Auscher, Alan McIntosh, and Andrea Nahmod.
\newblock Holomorphic functional calculi of operators, quadratic estimates and
  interpolation.
\newblock {\em Indiana Univ. Math. J.}, 46(2):375--403, 1997.

\bibitem{Boy_deLaub}
Khristo Boyadzhiev and Ralph deLaubenfels.
\newblock Spectral theorem for unbounded strongly continuous groups on a
  {H}ilbert space.
\newblock {\em Proc. Amer. Math. Soc.}, 120(1):127--136, 1994.

\bibitem{Che_Gale}
Bernard Chevreau and Jos{\'e}~E. Gal{\'e}.
\newblock Banach algebra techniques and extensions of operator-valued
  representations.
\newblock In {\em Conference on {A}utomatic {C}ontinuity and {B}anach
  {A}lgebras ({C}anberra, 1989)}, volume~21 of {\em Proc. Centre Math. Anal.
  Austral. Nat. Univ.}, pages 45--55. Austral. Nat. Univ., Canberra, 1989.

\bibitem{CDMcY}
Michael Cowling, Ian Doust, Alan McIntosh, and Atsushi Yagi.
\newblock Banach space operators with a bounded {$H\sp \infty$} functional
  calculus.
\newblock {\em J. Austral. Math. Soc. Ser. A}, 60(1):51--89, 1996.

\bibitem{deLau}
Ralph deLaubenfels.
\newblock Unbounded holomorphic functional calculus and abstract {C}auchy
  problems for operators with polynomially bounded resolvents.
\newblock {\em J. Funct. Anal.}, 114(2):348--394, 1993.

\bibitem{Duren}
Peter~L. Duren.
\newblock {\em Theory of {$H\sp{p}$} spaces}.
\newblock Pure and Applied Mathematics, Vol. 38. Academic Press, New York,
  1970.

\bibitem{Fr_Weis_dils}
Andreas~M. Fr{\"o}hlich and Lutz Weis.
\newblock {$H\sp \infty$} calculus and dilations.
\newblock {\em Bull. Soc. Math. France}, 134(4):487--508, 2006.

\bibitem{Haase}
Markus Haase.
\newblock {\em The functional calculus for sectorial operators}, volume 169 of
  {\em Operator Theory: Advances and Applications}.
\newblock Birkh\"auser Verlag, Basel, 2006.

\bibitem{Hav-Nersessian}
Victor~P. Havin and Ashot~H. Nersessian.
\newblock Bounded separation of singularities of analytic functions.
\newblock In {\em Entire functions in modern analysis ({T}el-{A}viv, 1997)},
  volume~15 of {\em Israel Math. Conf. Proc.}, pages 149--171. Bar-Ilan Univ.,
  Ramat Gan, 2001.

\bibitem{Hav-Ners-Cerda}
Victor~P. Havin, Ashot~H. Nersessian, and J.~Ortega-Cerd{\`a}.
\newblock Uniform estimates in the {P}oincar\'e-{A}ronszajn theorem on the
  separation of singularities of analytic functions.
\newblock {\em J. Anal. Math.}, 101:65--93, 2007.

\bibitem{leMe_sim}
Christian Le~Merdy.
\newblock The similarity problem for bounded analytic semigroups on {H}ilbert
  space.
\newblock {\em Semigroup Forum}, 56(2):205--224, 1998.

\bibitem{leMe3}
Christian Le~Merdy.
\newblock Similarities of {$\omega$}-accretive operators.
\newblock In {\em International {C}onference on {H}armonic {A}nalysis and
  {R}elated {T}opics ({S}ydney, 2002)}, volume~41 of {\em Proc. Centre Math.
  Appl. Austral. Nat. Univ.}, pages 84--95. Austral. Nat. Univ., Canberra,
  2003.

\bibitem{Lemerdy}
Christian Le~Merdy.
\newblock The {W}eiss conjecture for bounded analytic semigroups.
\newblock {\em J. London Math. Soc. (2)}, 67(3):715--738, 2003.

\bibitem{leMeW}
Christian Le~Merdy.
\newblock Square functions, bounded analytic semigroups, and applications.
\newblock In {\em Perspectives in operator theory}, volume~75 of {\em Banach
  Center Publ.}, pages 191--220. Polish Acad. Sci., Warsaw, 2007.

\bibitem{McInt1}
Alan McIntosh.
\newblock Operators which have an {$H\sb \infty$} functional calculus.
\newblock In {\em Miniconference on operator theory and partial differential
  equations ({N}orth {R}yde, 1986)}, volume~14 of {\em Proc. Centre Math. Anal.
  Austral. Nat. Univ.}, pages 210--231. Austral. Nat. Univ., Canberra, 1986.

\bibitem{McIYa}
Alan McIntosh and Atsushi Yagi.
\newblock Operators of type {$\omega$} without a bounded {$H\sb \infty$}
  functional calculus.
\newblock In {\em Miniconference on {O}perators in {A}nalysis ({S}ydney,
  1989)}, volume~24 of {\em Proc. Centre Math. Anal. Austral. Nat. Univ.},
  pages 159--172. Austral. Nat. Univ., Canberra, 1990.

\bibitem{Mikkola}
Kalle~M. Mikkola.
\newblock State-feedback stabilization of well-posed linear systems.
\newblock {\em Integral Equations Operator Theory}, 55(2):249--271, 2006.

\bibitem{PartPott}
Jonathan~R. Partington and Sandra Pott.
\newblock Admissibility and exact observability of observation operators for
  semigroups.
\newblock {\em Irish Math. Soc. Bull.}, 55:19--39, 2005.

\bibitem{Simard1999}
Arnaud Simard.
\newblock Counterexamples concerning powers of sectorial operators on a
  {H}ilbert space.
\newblock {\em Bull. Austral. Math. Soc.}, 60(3):459--468, 1999.

\bibitem{Staffans}
Olof Staffans.
\newblock {\em Well-posed linear systems}, volume 103 of {\em Encyclopedia of
  Mathematics and its Applications}.
\newblock Cambridge University Press, Cambridge, 2005.

\bibitem{SzNF}
B{\'e}la Sz.-Nagy and Ciprian Foia{\lfhook{s}}.
\newblock {\em Harmonic analysis of operators on {H}ilbert space}.
\newblock Translated from the French and revised. North-Holland Publishing Co.,
  Amsterdam, 1970.

\bibitem{Uiterd}
Marc Uiterdijk.
\newblock A functional calculus for analytic generators of {$C\sb 0$}-groups.
\newblock {\em Integral Equations Operator Theory}, 36(3):349--369, 2000.

\bibitem{vanWint71}
Clasine van Winter.
\newblock Fredholm equations on a {H}ilbert space of analytic functions.
\newblock {\em Trans. Amer. Math. Soc.}, 162:103--109 (1972), 1971.

\bibitem{Wei_L}
Lutz Weis.
\newblock The {$H\sp \infty$} holomorphic functional calculus for sectorial
  operators---a survey.
\newblock In {\em Partial differential equations and functional analysis},
  volume 168 of {\em Oper. Theory Adv. Appl.}, pages 263--294. Birkh\"auser,
  Basel, 2006.

\bibitem{Yak}
Dmitry~V. Yakubovich.
\newblock A linearly similar {S}z.-{N}agy-{F}oias model in a domain.
\newblock {\em Algebra i Analiz}, 15(2):190--237, 2003.

\bibitem{YaPa}
Dmitry~V. Yakubovich.
\newblock Nagy-{F}oia\c s type functional models of nondissipative operators in
  parabolic domains.
\newblock {\em J. Operator Theory}, 60(1):3--28, 2008.

\end{thebibliography}
%%%%%%%%%%%%%%%%%%%%%%%%%%%%%%%%%%%%%%%%%%%%%%%%%%%%%%%%%%%%%%%%%%%%%%

\def\cprime{$'$}
\def\lfhook#1{\setbox0=\hbox{#1}{\ooalign{\hidewidth
  \lower1.5ex\hbox{'}\hidewidth\crcr\unhbox0}}}

\def\cprime{$'$} \def\lfhook#1{\setbox0=\hbox{#1}{\ooalign{\hidewidth
  \lower1.5ex\hbox{'}\hidewidth\crcr\unhbox0}}}

\vskip1cm

%\end{biblist}  \end{bibdiv}

% \centerline\textbf{REFERENCIAS ADICIONALES?}

\end{document}